\documentclass[12pt,a4paper]{article}
\usepackage{preamble}

\title{Special lines on contact manifolds}
\author{\JaBu \and Grzegorz Kapustka \and Micha\l{} Kapustka}

\hypersetup{%
  pdftitle={Special lines on contact manifolds},%
  pdfauthor={Jaroslaw Buczynski,  Grzegorz Kapustka,  Michal Kapustka}}

\date{3rd June 2020}

\usepackage{color}
\newenvironment{red}{\color{red}}{}
\newcommand{\bred}{\begin{red}}
\newcommand{\ered}{\end{red}}

\newenvironment{blue}{\color{blue}}{}
\newcommand{\bblue}{\begin{blue}}
\newcommand{\eblue}{\end{blue}}

\newenvironment{green}{\color{green}}{}
\newcommand{\bgreen}{\begin{green}}
\newcommand{\egreen}{\end{green}}

\usepackage[textsize=tiny]{todonotes}

\theoremstyle{plain}

\begin{document}

\maketitle

\begin{abstract}
In a series of two articles Kebekus studied deformation theory of minimal rational curves on contact Fano manifolds.
   Such curves are called contact lines.
   Kebekus proved that a contact line through a general point is necessarily smooth
      and has a fixed standard splitting type of the restricted tangent bundle.
   In this paper we study singular contact lines and those with special splitting type.
   We provide restrictions on the families of such lines,
   and on contact Fano manifolds which have reducible varieties of minimal rational tangents.
   We also show that the results about singular lines naturally generalise to complex contact manifolds,
      which are not necessarily Fano, 
      for instance,
      quasi-projective contact manifolds or compact contact manifolds of Fujiki class~$\ccC$.
   In particular, in many cases the dimension of a family of singular lines is at most $2$ less 
      than the dimension of the contact manifold.
\end{abstract}

\medskip
{\footnotesize
\noindent\textbf{addresses:\\}
J.~Buczy\'nski, \nolinkurl{jabu@mimuw.edu.pl}, 
   Institute of Mathematics of the Polish Academy of Sciences, ul. \'Sniadeckich 8, 00-656 Warsaw, Poland,
   and Faculty of Mathematics, Computer Science and Mechanics, University of Warsaw, ul.~Banacha 2, 02-097 Warszawa, Poland\\
G.~Kapustka, \nolinkurl{grzegorz.kapustka@uj.edu.pl},
Faculty of Mathematics and Computer Science of the Jagiellonian University,
\L{}ojasiewicza 6, 30-348 Krak\'ow, Poland\\
M.~Kapustka, \nolinkurl{michal.kapustka@impan.pl}, 
Institute of Mathematics of the Polish Academy of Sciences, ul. \'Sniadeckich 8, 00-656 Warsaw, Poland\\

\noindent\textbf{keywords:}
complex contact manifold, minimal rational curves, contact lines, VMRT, manifolds of Fujiki class C.

\noindent\textbf{AMS Mathematical Subject Classification 2020:}
Primary: 53D10; Secondary: 14J45, 14M20, 32Q57, 14M22.}

\newpage

\tableofcontents

\section{Introduction}\label{sect_introduction}

A contact manifold is a complex manifold of odd dimension $2n+1$ with a contact structure, that is, 
an exact sequence  
\begin{equation}\label{equ_contact_sequence_intro}
  0 \to F \to TX \stackrel{\theta}{\to} L \to 0,
\end{equation}
where $F$ is a subbundle of $TX$, $L$ is a line bundle,
and such that the locally defined derivative of $\theta$ induces a symplectic structure on each fibre of $F$.
This article addresses the problem of classification of contact manifolds. 
A lot of work has already been done in this direction, see \cite{jabu_contact_duality_and_integrability}, \cite{jabu_moreno_contact_survey}
  for an overview and motivation in the projective case.
The major remaining part of the task, is to classify contact Fano manifolds, which are expected to be always homogeneous spaces.
In addition non-projective contact manifolds recently gained attention, 
  see \cite{hwang_manivel_quasi_complete_contact}, \cite{frantzen_peternell}, \cite{peternell_jabu_contact_3_folds}, 
  \cite{peternell_schrack_contact_Kaehler_mflds}, and the classification in the non-projective case is widely open.
Even in dimension three, the classification of rationally connected compact contact manifolds with $b_2 \ge 2$ is unknown.
Although projective contact manifolds are our main object of study, in this paper we also provide some extensions of our results to the settings involving generically contact manifolds or non-projective contact manifolds.

Among the main tools to approach the classification problem is the theory of minimal rational curves. This article is among the first attempts to study minimal rational curves on a projective manifold $X$
  without assuming that they are general, or that they pass through a general point.
In the case of contact Fano manifold $X$ of dimension $2n+1$ it amounts to study \emph{contact lines}, that is, rational curves for which the pullback of the restriction of the canonical bundle on $X$ through the normalisation is 
$\mathcal{O}_{\PP^1}(-n-1)$.
Equivalently, the degree of the pullback of the line bundle $L$ as in \eqref{equ_contact_sequence_intro} is $1$.
It is well known that unless $X \simeq \PP^{2n+1}$, 
  those contact lines exist, cover $X$ and form a family of pure dimension $3n-1$. Moreover,  for a general contact line with a parametrisation $f \colon \PP^1 \to X$
  the pullback of the tangent bundle $TX$ has a standard splitting type:
\begin{equation} \label{equ_standard_splitting_type}
  f^*TX \simeq \ccO^{\oplus (n+1)} \oplus \ccO(1)^{\oplus (n-1)} \oplus \ccO(2) = \ccO(0^{n+1}, 1^{n-1}, 2).
\end{equation}
Contact lines satisfying \eqref{equ_standard_splitting_type} are called \emph{standard}. Unfortunately, it is not known if all contact lines on a contact manifold are standard. This is the case for homogeneous manifolds, which are expected to be all Fano contact manifolds. 
In general, by results of Kebekus \cite{kebekus_lines1}
   any contact line through a general point is smooth and standard. 

Moreover, \cite[Thm~3.1]{kebekus_lines2} claimed that for general $x\in X$ the variety $\ccH_x$ parameterising  contact lines passing through $x$ is irreducible.
Unfortunately, there is a gap in the proof, 
   see \cite[Rem.~3.2]{jabu_contact_duality_and_integrability} or 
   \S\ref{sec_history_lines} for more details.
The argument of Kebekus only shows that if for a general $x\in X$ the variety $\ccH_x$ is reducible, then the set of non-standard lines forms a divisor in a component of all the lines.
In this article, we proceed to further study the properties of the potential contact Fano manifold $X$, which contains many non-standard lines. More precisely, we provide a description of the locus in $X$ swept out by the non-standard lines.

\begin{thm}\label{thm_dimension_of_the_space_of_special_lines}
  Let $X$ with $F$ and $L$ as in \eqref{equ_contact_sequence_intro} be a contact Fano manifold. Let $\ccH$ be an irreducible component of the Hilbert scheme parametrising contact lines on $X$. 
      Suppose that $\ccB \subset \ccH$ is an irreducible codimension 1 subset,
      consisting only of non-standard lines.
   Let $B \subset X$ be the locus swept by those lines. Then:
   \begin{enumerate}
    \item \label{item_B_nonnormal}
         $B$ is a non-normal irreducible divisor on $X$. 
    \item \label{item_normalisation_of_B_is_a_Pn_bundle}
         Denote by $\xi\colon \ccU\to B$ the normalisation map.
         There exists a normal variety $\ccR$, a vector bundle $E$ of rank $n+1$ on $\ccR$,
            and an isomorphism $\ccU \simeq \PP (E)$ such that 
            $E$ is isomorphic to $(\pi_*\xi^* L)^*$, where  $\pi \colon \ccU \to\ccR$ is the canonical projection.
    \item \label{item_normalisation_of_B_normalises_linear_spaces}
          The restriction $\xi|_{\PP^n}$ to any fibre of $\pi\colon\ccU \to \ccR$ is 
             the normalisation of the image $\xi(\PP^n)$ and $(\xi|_{\PP^n})^* L = \ccO_{\PP^n}(1)$.
   \end{enumerate}
\end{thm}
The theorem is proved in Section~\ref{sect_divisors_of_non_standard_lines}. 
The proof of part \ref{item_B_nonnormal} can be extracted from  the arguments in \cite[\S3]{kebekus_lines2}.
See also Lemma~\ref{lem_ccB_x_is_a_union_of_irreducible_components}, which shows $\dim B =2n$,
   and \S\ref{sec_history_lines}, which reviews the content of and the gap in \cite[Prop.~3.2]{kebekus_lines2}. 
An alternative way to see that $B$ is not normal is using \ref{item_normalisation_of_B_is_a_Pn_bundle},
   see the last paragraph of the proof of the theorem.

Our further results concern singular and non-standard lines on generalisations of projective contact manifolds.

Let $X$ be a complex manifold and 
suppose that there exist a vector bundle $F$ 
  and a line bundle $L$ admitting an exact sequence 
  as in \eqref{equ_contact_sequence_intro}, 
  but that the derivative of the locally defined 
  $1$-form $\theta\colon TX \to L$ 
  determines a symplectic form  only on an open dense subset of $X$.
In such situation we will say that $(X,F)$ is a 
  \emph{generically contact manifold}.

In this setting we can measure the degree of rational curves using the intersection with $L$.
In particular, the \emph{contact lines} are the (complete) rational curves $C \subset X$ with the intersection number $L.C =1$.
The notion of standard line extends to this context. However, contrary to the Fano case, there is no guarantee that lines exist, and the intersection of $L$ with a rational curve may potentially be zero or negative. 
This is a major issue, however, assuming that lines exist, we obtain many results that are parallel to the contact projective case.

Recall that a complex manifold is of \emph{Fujiki class $\ccC$}, 
   if it is bimeromorphic to a compact K\"ahler manifold
  \cite{fujiki_closedness_of_Douady_spaces}. 

\begin{thm}\label{thm_singular_lines_on_non_projective}
  Suppose that $X$ is either quasi-projective or compact of Fujiki class $C$ 
  and in addition $(X,F)$ is a generically contact manifold. 
  We have the following restrictions on the families of singular contact lines.
  \begin{enumerate}
   \item \label{item_singular_lines_do_not_cover_generically_contact}
         Singular contact lines do not cover $X$.
   \item
   \label{item_singular_lines_have_small_dim_on_projective_generically_contact}
         If $X$ is projective and $L$ is ample, 
           then the dimension of the space parametrising the singular lines 
           is at most $\dim X -2$. 
   \item   
   \label{item_singular_lines_sweeping_codim_1_subset_of_non_projective_contact}
         If $(X,F)$ is contact and a family of singular lines sweeps out 
           a locus of codimension $1$ in $X$,
           then the dimension of this family is equal to $\dim X -2$.
  \end{enumerate}
\end{thm}
This theorem generalises \cite[Prop.~3.3]{kebekus_lines1} which is a similar statement 
   for a contact Fano manifold $(X,F)$.
We prove the theorem in Subsection~\ref{sect_sing_lines_and_distributions}, 
  Corollaries~\ref{cor_singular_lines_on_generically_contact} and \ref{cor_singular_lines_on_non_projective_contact}.

\begin{prop}\label{prop_line_through_general_point_is_standard_on_non_projective}
   Suppose that $(X,F)$ is a contact manifold.
   If $X$ is either quasi-projective or compact of Fujiki class $C$, 
   then any line through a general point $x \in X$ is standard.
\end{prop}
This  result generalises \cite[Lemma~3.5]{kebekus_lines1}, 
   which again is for contact Fano $(X,F)$. 
We show the proposition in Subsection~\ref{sect_splitting_types}, 
  Corollary~\ref{cor_non_standard_lines_do_not_cover_X}.

The structure of the paper is the following. 
In Section~\ref{sect_context}, we outline the content of relevant literature and sketch our arguments.
In Section~\ref{sect_preliminaries}, we discuss and review parameter spaces for subvarieties and cycles, namely Barlet spaces and Chow varieties. 
In a setting of polarised complex manifolds 
we define lines, linear spaces and their families and relate these families to cycle spaces.
We describe the situation when there are plenty of lines on a variety.
Section~\ref{sect_distributions}, describes the notions of a distribution and a manifold with a global corank $1$ distribution.
In Section~\ref{sect_prelim_contact_manifolds}, we review the literature on contact manifolds and discuss possible splittings of tangent bundle restricted to lines. 
In Section~\ref{sect_sing_lines}, we investigate singular lines on polarised manifolds.
In Section~\ref{sect_divisors_of_non_standard_lines}, we show that if there are many non-standard lines on a contact Fano manifold, then their configuration must be very special.

\subsection*{Acknowledgements}

The authors are very grateful to Weronika Buczy\'nska, Stephane Druel, Daniel Greb, Andreas H\"oring Jun-Muk Hwang, Stefan Kebekus, Mateusz Micha{\l}ek,
   Thomas Peternell, {\L}ukasz Sienkiewicz, Luis Sola Conde, and Jaros{\l}aw Wi\'s\-niew\-ski for all their comments and discussions, 
   and to Daniel Barlet for his hints about cycle spaces.
We also thank Louise Hibberd for her suggestions improving the presentation of the article.
The authors are
   supported
   by the research project
  ``Uk\l{}ady linii na zespolonych rozmaito{\'s}ciach kontaktowych oraz uog{\'o}lnienia''
   funded by the Polish government science programme in 2012-2014.
In addition, J.~Buczy\'nski is supported by the scholarship ``START'' of the Foundation for Polish Science.

A thorough rewrite of the article followed suggestions by anonymous referees, whom we sincerely thank.
The revisions were made while Buczy\'nski was supported by the research  grant  from Polish National Science Centre, number 2013/08/A/ST1/00804,
   and by a scholarship of Polish Ministry of Science.
Moreover, these revisions took place partially during the Polish Algebraic Geometry mini-Semester (miniPAGES), 
   which was supported by the grant 346300 for IMPAN from the Simons Foundation and the matching 2015-2019 Polish MNiSW fund.
We thank IMPAN and the participants of the minisemester for an excellent and inspiring atmosphere during the event.

\section{Context and history}\label{sect_context}

In this section we present some of the relevant facts from the literature about contact manifolds. 
In several cases, if the references are not explicit, but the statements we present are well known to the experts, we fill in the arguments. 
Typically, these brief proofs simply combine a couple of references.
We also set some notation which is relevant for the main parts of the article.
The most important is the following setting.

\begin{setting}[contact projective setting]\label{sett_contact_projective}
     Suppose that $X$ is a complex projective manifold of dimension $2n+1$ with a contact distribution $F \subset TX$. 
     Denote $L=T X /F$.
\end{setting}

\subsection{Historical remarks on classification of contact manifolds}\label{sect_historical_remarks}

We recall the major classification result:

\begin{thm}[{\cite{4authors, demailly}}]\label{thm_classification}
  In Setting~\ref{sett_contact_projective},
         either $X= \PP(T^* M)$ for some projective manifold $M$ and $L\simeq \ccO_{\PP(T^* M)}(1)$,
         or $X$ is Fano and $\Pic X = \ZZ[L]$,
         or $X = \PP^{2n+1}$ and $L=\ccO_{\PP^{2n+1}}(2)$.
\end{thm}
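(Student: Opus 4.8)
The plan is to organise the two deep inputs cited above --- the Mori-theoretic analysis of \cite{4authors} and the integrability theorem of \cite{demailly} --- around a single numerical identity. First I would record the backbone computation. The contact form packages as a nonzero twisted $1$-form $\theta \in H^0(X, \Omega^1_X \otimes L)$, and the nondegeneracy of $\ud\theta$ makes each fibre $F_x$ an $L_x$-valued symplectic vector space; wedging the symplectic form $n$ times trivialises $\det F^* \otimes L^{\otimes n}$, so that $\det F \simeq L^{\otimes n}$. Dualising $0\to F \to TX \to L \to 0$ and taking determinants then gives
\[
  K_X \simeq \det F^* \otimes L^{-1} \simeq L^{-(n+1)}, \qquad \text{i.e.}\ -K_X \simeq (n+1)L.
\]
Hence $X$ is Fano precisely when $L$ is ample, and the whole trichotomy reduces to controlling the failure of ampleness of $L$ and the way the extremal contractions of $X$ interact with the distribution $F$.

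The first branch is the case where $L$ is not ample (which includes all $X$ with $b_2(X)\ge 2$), and I expect it to produce $X \simeq \PP(T^*M)$. Here I would run Mori theory: choose an extremal contraction $\varphi\colon X \to Y$ and study how $F$, $L$ and $\ud\theta$ restrict to its fibres. The key constraint, following \cite{4authors}, is the Legendrian dimension bound of Proposition~\ref{properties_of_distribution}: since $\ud\theta$ is nondegenerate, every $F$-integral subvariety has dimension at most $n$, so the fibres cannot be simultaneously large and integral. One shows that the contraction is a $\PP^n$-bundle over a smooth base $M$ with $\dim M = n+1$, that $L$ restricts to $\ccO(1)$ on the fibres, and that the Liouville (tautological) structure identifies $X$ with $\PP(T^*M)$ and $L$ with $\ccO_{\PP(T^*M)}(1)$; this is exactly the case where the contact structure is the canonical one on a projectivised cotangent bundle.

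The second branch is $b_2(X) = 1$. If $L$ is so positive that no contact lines exist --- the situation $L \simeq \ccO(2)$ on $\PP^{2n+1}$ --- then I would recognise $X = \PP^{2n+1}$ through a characterisation of projective space (e.g.~by the forced splitting type \eqref{equ_standard_splitting_type} of $TX$ along lines, or the Cho--Miyaoka and Kebekus criteria). Otherwise contact lines sweep out $X$, the class $[L]$ generates $\Pic X = \ZZ[L]$, and it only remains to prove that $X$ is genuinely Fano, i.e.~that $L$ is actually ample rather than merely on the boundary of the nef cone. This last point is where \cite{demailly} enters and is the step I expect to be the \emph{main obstacle}: Demailly's Frobenius-integrability theorem shows that, on a compact K\"ahler manifold, a twisted $1$-form whose line bundle is insufficiently positive must have integrable kernel, which directly contradicts the nondegeneracy of $\ud\theta$ furnished by Proposition~\ref{properties_of_distribution}. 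Granting this analytic input, $L$ is forced to be ample, so $X$ is Fano with $\Pic X = \ZZ[L]$, completing the trichotomy. The genuinely hard content is thus external --- the delicate contraction analysis of the fibration case and, above all, Demailly's positivity/integrability theorem --- so what I can honestly offer is a roadmap assembling these two cited results around the identity $-K_X \simeq (n+1)L$.
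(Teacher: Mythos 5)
The paper never proves this statement: it is quoted from the literature, with \cite{4authors} (Kebekus--Peternell--Sommese--Wi\'sniewski) and \cite{demailly} carrying all the weight, so your proposal has to be judged as a reconstruction of how those two references fit together. At that level you have the right ingredients --- the identity $-K_X \simeq L^{\otimes(n+1)}$, the contraction analysis of \cite{4authors}, and Demailly's integrability theorem --- but your assembly has two genuine flaws.

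First, your case division is not exhaustive, and the parenthetical claim that ``$L$ not ample includes all $X$ with $b_2(X)\ge 2$'' is false: $\PP(T^*\PP^{n+1})$ is a projective contact manifold with $b_2=2$ whose $L=\ccO_{\PP(T^*\PP^{n+1})}(1)$ \emph{is} ample (it is Fano) --- the paper itself records this as the unique Fano case with $\rk \Pic(X)\ge 2$. Such an $X$ falls into neither of your branches ($L$ ample, $b_2\ne 1$), so your proof structure never reaches it. The split used in \cite{4authors} is by Picard number, not by ampleness of $L$: either $X$ is Fano with $b_2=1$, or else (including Fano with $b_2\ge 2$) one proves $X\simeq\PP(T^*M)$. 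Second, Demailly's theorem cannot be deferred to the $b_2=1$ branch: it is needed at the very start, for both branches. To run Mori theory in your first branch you need $K_X$ not nef, and since $K_X=-(n+1)L$, that is equivalent to $-L$ not being nef --- which is \emph{not} implied by $L$ failing to be ample (a priori $-L$ could be nef, or $L$ numerically trivial). This is exactly the hypothesis under which \cite{4authors} state their classification, and removing it is the entire content of \cite{demailly}; historically the classification was conditional for two years for precisely this reason. Once $K_X$ is known never to be nef, the $b_2=1$ case is immediate ($\rho=1$ forces $-K_X$ ample, hence Fano), and the clean way to split off the last case is by divisibility rather than by existence of lines: if $L=kH$ with $H$ the ample generator and $k\ge 2$, the index of $X$ is $(n+1)k\ge 2n+2=\dim X+1$, so Kobayashi--Ochiai gives $X=\PP^{2n+1}$ and $k=2$; your ``no contact lines, then recognise projective space'' is a vaguer surrogate for this argument.
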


If we suppose in addition to the assumptions of Theorem~\ref{thm_classification} that $X$ is Fano, then the case of $\PP(T^*M)$ becomes more specific, 
   and we have the uniqueness of the contact structure.
\begin{prop}\label{prop_classification_Fano}
  In Setting~\ref{sett_contact_projective}, assume in addition that $X$ is Fano.
  Then either $X= \PP(T^* \PP^{n+1})$ (and $L\simeq \ccO_{\PP(T^* \PP^{n+1})}(1)$),
         or $\Pic X = \ZZ[L]$,
         or $X = \PP^{2n+1}$ (and $L=\ccO_{\PP^{2n+1}}(2))$.
  In the first two cases the contact distribution $F\subset TX$ is unique.
  More precisely, for every point $x \in X$, the  subspace $F_x\subset T_x X$ is the smallest linear subspace containing all tangent directions to contact lines in $X$.
\end{prop}
\begin{prf}
  By  \cite[Cor.~4.2]{lebrun_salamon} either $X = \PP(T^* \PP^{n+1})$ or $b_2(X) = 1$. 
  In the latter case,  by Theorem~\ref{thm_classification} either  $\Pic X = \ZZ[L]$, or $X = \PP^{2n+1}$, as claimed.
  The uniqueness for the case with $\Pic X = \ZZ[L]$ is proved in \cite[Cor.~4.5]{kebekus_lines1}.
  The uniqueness for the case $X= \PP(T^* \PP^{n+1})$ follows since we have two projections
     $\PP(T^*\PP^{n+1}) \to \PP^{n+1}$ and $\PP(T^*\PP^{n+1}) \to (\PP^{n+1})^*$ 
     and the tangent spaces to the fibres of both projections are contained in $F$ by \cite[Rem.~2.3]{kebekus_lines1}, 
     or Lemma~\ref{lem_lines_are_integral} below.
\end{prf}

In the case $X=\PP^{2n+1}$, the contact distribution is not unique, and it is determined by a choice of symplectic form on a vector space $\CC^{2n+2}$,
  see \cite[\S E.1]{jabu_dr} for discussion and references.
The two cases of contact Fano manifolds $X= \PP(T^* \PP^{n+1})$ and $X = \PP^{2n+1}$ are relatively well understood,
  in particular the contact lines are all smooth and standard (in case $X= \PP(T^* \PP^{n+1})$) or do not exist (in the case $X = \PP^{2n+1}$, because $L \simeq \ccO_{\PP^{2n+1}}(2)$).
Thus Proposition~\ref{prop_classification_Fano} shows that in the context of our main goals (investigating singular or non-standard lines) 
  it is harmless 
to work in the more restricted setting.
\begin{setting}[contact Fano setting]\label{sett_contact_Pic_generated_by_L}
   In Setting~\ref{sett_contact_projective}, 
      that is $X$ is a projective manifold (over $\CC$) of dimension $2n+1$ with a contact distribution 
      $F \subset TX$ and the quotient line bundle $L=TX /F$, assume in addition that $\Pic X = \ZZ [L]$.
\end{setting}
We will add more notation to this in Setting~\ref{sett_contact_Fano}.

The classification of contact Fano manifolds is known in low dimension.
\begin{thm}\label{thm_classification_in_low_dimension}
   In Setting~\ref{sett_contact_Pic_generated_by_L}, we have
      $n \ge 2$ and if $n=2$, then $X$ is the five dimensional homogeneous $G_2$-manifold.
\end{thm}
The case $n=1$ has been claimed first by Ye \cite{ye}, but his argument contains a gap.
In fact, the cited article only shows Theorem~\ref{thm_classification} in the case $n=1$ and it does not comment on the case $b_2(X)=1$.
However, it is not difficult to treat the missing case, there are at least two approaches.
Firstly, there are few Fano threefolds with Picard number $1$ and index $2$,
  and one can just check all the possibilities.
Alternatively, one can use Hirzebruch-Riemann-Roch theorem and cohomological criterions
  for a manifold to be a projective space.
This latter approach has been implemented in \cite{peternell_jabu_contact_3_folds} in a more general situation.

The case $n=2$ has been proved by Druel \cite{druel} using results of
  \cite{mukai_Fano_3_folds_and_manifolds_of_coindex_3},
  \cite{mella_good_divisors_on_Mukai_varieties},
  and \cite{beauvillefano}.
Further extensions of Theorem~\ref{thm_classification_in_low_dimension} are studied in \cite{jabu_wisniewski_weber_torus_on_contact}.

\subsection{Lines on contact Fano manifolds}\label{sec_history_lines}

\begin{setting}[contact Fano setting in detail]\label{sett_contact_Fano}
   Suppose that $X$ is a complex Fano manifold of dimension $2n+1$ not isomorphic to $\PP^{2n+1}$, nor to $\PP(T^*\PP^{n+1})$.
   Suppose further that there exists a contact distribution $F \subset TX$ on $X$,
     that is we work in Setting~\ref{sett_contact_Pic_generated_by_L}.
   Then $F$ is unique by Proposition~\ref{prop_classification_Fano}.
   Denote by $L= TX/F$ the quotient line bundle,
     and pick $\ccH$ to be any irreducible component of the subset of the Chow variety parametrising contact lines on $X$.
   Finally, let $\ccH_x$ be the set of lines through a fixed point $x\in X$.
   Note that $\ccH$ and $\ccH_x$ are non-empty and $\dim \ccH = 3n-1$ 
      (see \cite[Equation (2.1) and Prop.4.1]{kebekus_lines1}; alternatively, see Corollary~\ref{cor_dim_ccH_eq_3n-1} for details).
   Moreover, every irreducible component of $\ccH_x$ has dimension $n-1$ (\cite[Prop.4.1]{kebekus_lines1} or Proposition~\ref{prop_Hx_Legendrian}).
\end{setting}

We now discuss in more detail the gap in \cite{kebekus_lines2}.
The gap has been previously mentioned  in \cite[Footnote~4 on page~7]{jabu_dr} and in \cite[Rem.~3.2]{jabu_contact_duality_and_integrability}.
In Setting~\ref{sett_contact_Fano} Kebekus claims that for a general point $x \in X$, the set $\ccH_x$ is irreducible \cite[Thm~1.1(2) or Thm~3.1]{kebekus_lines2}.
The argument is presented in \cite[\S3]{kebekus_lines2} and begins with the statement of \cite[Prop.~3.2]{kebekus_lines2}.
Unfortunately, Step 2 of the proof of this proposition is incorrect. 
It constructs a family of varieties denoted $V \to D^0$, where $V \subset D^0 \times X$.
(In this article $D^0$ is denoted by $B$, see Theorem~\ref{thm_dimension_of_the_space_of_special_lines}, or Section~\ref{sect_divisors_of_non_standard_lines}.)
The family is of pure dimension $n$, and Kebekus claims that $V \to D^0$ is 
``a well-defined family of cycles in $X$ in the sense of \cite[Chapt.~I.3.10]{kollar_book_rational_curves}''.
By ``Chapt.~I.3.10'' Kebekus means Definition~3.10 in Chapter~I of the book.
This definition has four items. 
It is indeed straightforward to verify that the first three items (3.10.1)--(3.10.3) are satisfied in the situation of $V \to D^0$.
However, the final one (3.10.4) very roughly says that for every $w\in D^0$, and a curve containing $w$ as a special point,
  the limit of the cycle over a generic point of the curve is equal to the cycle over $w$ (see Comment (3.10.5)).
A simple situation when this property fails is the family of $0$-dimensional cycles in $\PP^1$, 
  with the base of the family parametrised by a nodal rational curve $C$.
Then we let the universal family $U$ be $\PP^1 \subset C \times \PP^1$, with the first projection equal to the normalisation map and the second projection equal to the identity map.
Such a family satisfies (3.10.1)--(3.10.3), but fails to satisfy (3.10.4), because near the singular point the limit is either one or the other point in the normalisation.
This last property (3.10.4) is not justified in the proof of \cite[Step~2, Prop.~3.2]{kebekus_lines2}, and it is not at all obvious.

In \cite[Thm~I.3.17]{kollar_book_rational_curves} it is shown that if the base is normal, then the property (3.10.4) is automatic.
Thus the claim of \cite[Step~2, Prop.~3.2]{kebekus_lines2} is fine if $D^0$ is assumed in addition to be normal.
This observation, together with Lemma~\ref{lem_ccB_x_is_a_union_of_irreducible_components}, proves Theorem~\ref{thm_dimension_of_the_space_of_special_lines}\ref{item_B_nonnormal}.
One could hope to fix the issue by replacing $D^0$ with its normalisation. 
However, the normalisation of $D^0$ needs not to have $b_2=1$.
For instance, there exists an example of a non-normal cubic hypersurface in $\PP^4$,  
  whose normalisation is $\PP^1 \times \PP^2$, thus has Picard rank $2$, see \cite[Lem.~2.2(1)c, Thm~3.1(a.5)]{lee_park_schenzel_nonnormal_cubics}).
Consequently, the further part of the proof of Step 2 fails: the induced map to the Chow variety needs not to be finite.
In fact, one consequence of Theorem~\ref{thm_dimension_of_the_space_of_special_lines} 
  is that all the fibres must have dimension $n$.

\subsection{Sketch of proofs and intermediate results}\label{sect_sketch}

The arguments for Theorem~\ref{thm_singular_lines_on_non_projective} 
  rely on a technical Proposition~\ref{prop_singular_lines}, 
  whose proof follows the method of \cite[Prop.~3.3]{kebekus_lines1}, carefully adapted to the setting of quasi-projective varieties and analytic varieties.
Proposition~\ref{prop_line_through_general_point_is_standard_on_non_projective} 
  is proved by a standard analysis of possible splitting types of the tangent bundle restricted to lines.
See Corollary~\ref{cor_non_standard_lines_do_not_cover_X} for a stronger version of this statement.

The proof of Theorem~\ref{thm_dimension_of_the_space_of_special_lines} is more tricky. 
It is centred around the following concept:
\begin{defin}\label{defin_linear_subspace}
   Suppose that $X$ is a complex manifold $X$ and $L$ is a line bundle on $X$. 
   A \emph{linear subspace} in $(X,L)$ is
      a compact subvariety $\Gamma \subset X$ 
      such that the normalisation of $\Gamma$ is a projective space 
      and the restriction of $L$ to $\Gamma$ is a line bundle of degree $1$,
  see \S\ref{sect_lines_and_linear_subspaces} for more details.
  A linear subspace of dimension $1$ is a called a \emph{line}.
\end{defin}
    In particular, this concept generalises the notion of contact line 
    to higher dimensions (and to more general varieties).
In the situation of the definition we say that two points $x, y\in X$ are 
    \emph{connected by a line},
   if there exists a single line $C \subset X$, such that $x,y \in C$.
Clearly, any two points on a linear subspace $\Gamma \subset X$ of any dimension are connected by a line.
In fact, this property characterises linear subspaces.

\begin{thm}\label{thm_many_lines}
   Suppose that $\Gamma$ is a projective variety with an ample line bundle $L$,
     such that two general points $x,y \in \Gamma$ are connected by a single line.
   Then $\Gamma$ is a linear subspace of itself, that is, it admits a normalisation $\mu\colon \PP^k \to \Gamma$,
     where $k=\dim \Gamma$ and $\mu^*L = \ccO_{\PP^k}(1)$.
\end{thm}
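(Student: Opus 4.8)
The plan is to reduce the statement to \cite[Thm~3.6]{kebekus_families_of_singular_rational_curves} by first passing to the normalisation and then verifying that the lines on $X$ form a dominating family of rational curves, a general member of which is the unique line joining its two general points. First I would normalise: let $\nu\colon X^\nu \to X$ be the normalisation, so that $\nu^*L$ is again ample (a finite pullback of an ample bundle), and observe that every line $C\subset X$ lifts. Indeed, its normalisation $f\colon \PP^1 \to C$ factors through $X^\nu$ by the universal property, producing a curve $\tilde C \subset X^\nu$ with the same normalisation $\PP^1$ and $(\nu^*L).\tilde C = 1$. Since $\nu$ is an isomorphism over a dense open set, a general pair of points of $X^\nu$ maps to a general pair of $X$, the lifted line through them exists, and uniqueness downstairs forces uniqueness upstairs. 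Hence $X^\nu$ is a normal projective variety satisfying the same hypotheses, and it suffices to prove that a \emph{normal} such variety is isomorphic to $\PP^k$ with $\nu^*L = \ccO_{\PP^k}(1)$; composing the isomorphism with $\nu$ then yields the desired normalisation $\mu\colon \PP^k \to X$.

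Next, on the normal model I would organise the lines into a covering family of rational curves $\ccH$, with universal family $\pi\colon \ccU \to \ccH$ and evaluation $\ev\colon\ccU \to X$. Because $L.C = 1$ and $L$ is ample, no line can degenerate into a sum of two or more effective curves, each of $L$-degree at least $1$; thus $\ccH$ is unsplit and proper, and Lemma~\ref{lem_bend_and_break} applies. The uniqueness hypothesis says precisely that the double evaluation $\ccU \times_{\ccH} \ccU \to X\times X$ is birational onto a dense subset of $X\times X$. Fixing a general point $x$, the lines through $x$ form the subfamily $\ccH_x$, and counting dimensions (the lines through $x$ sweep out $X$ and, by uniqueness, exactly once) gives $\dim \ccH_x = k-1$; the tangent map $\tau_x\colon \ccH_x \dashrightarrow \PP(T_xX)\cong\PP^{k-1}$ is generically injective by \cite[Thm~2.4(i)]{kebekus_families_of_singular_rational_curves}, hence birational onto $\PP(T_xX)$. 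These are exactly the conditions needed to invoke \cite[Thm~3.6]{kebekus_families_of_singular_rational_curves}.

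Applying \cite[Thm~3.6]{kebekus_families_of_singular_rational_curves}, I would conclude that the normal model is $\PP^k$ and that the members of $\ccH$ are the lines of $\PP^k$; restricting $\nu^*L$ to one such line shows it has degree $1$, so $\nu^*L = \ccO_{\PP^k}(1)$, which completes the proof after the reduction above. I expect the main obstacle to be the careful verification of the hypotheses of \cite[Thm~3.6]{kebekus_families_of_singular_rational_curves} from the bare assumption of a single connecting line: one must control possibly \emph{singular} members of $\ccH$ so that the tangent map is defined and well-behaved (which is where \cite[Thm~2.4(i)]{kebekus_families_of_singular_rational_curves} and the bend-and-break input of Lemma~\ref{lem_bend_and_break} enter), and one must upgrade the set-theoretic uniqueness of the connecting line to the genericity and birationality of $\tau_x$, rather than merely its generic finiteness.
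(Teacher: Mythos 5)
Your overall strategy coincides with the paper's: first reduce to the normal case via the normalisation (your first paragraph is essentially the paper's own reduction, and it is correct), then quote Kebekus's characterisation of projective space, \cite[Thm~3.6]{kebekus_families_of_singular_rational_curves}. The gap is in how you match the hypotheses of that theorem. As the paper's proof makes visible, what has to be fed into Thm~3.6 is not a family with a well-behaved tangent map, but rather: $X$ normal, $L$ ample, \emph{every} pair of points of $X$ joined by an irreducible rational curve of $L$-degree $2$, and uniqueness of the connecting curve for a \emph{general} pair. So the two things that actually need checking are (i) the passage from ``two general points are connected'' to ``all pairs of points are connected'', which the paper obtains from the observation that being connected by a line is a Zariski-closed condition on $X \times X$ (properness of the space of lines: a line cannot degenerate to a cycle with several components, each of $L$-degree at least $1$), and (ii) the degree bookkeeping: the theorem concerns curves of degree $2$, so one applies it with $L$ replaced by $L^{\otimes 2}$, under which your degree-$1$ lines become degree-$2$ curves, and the conclusion then yields $\mu^* L \simeq \ccO_{\PP^k}(1)$. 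Neither point appears in your proposal.

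Conversely, the apparatus you do build is not what Thm~3.6 asks for, and the step you yourself single out as the main obstacle is a genuine gap as written: \cite[Thm~2.4(i)]{kebekus_families_of_singular_rational_curves} (the input to Lemma~\ref{lem_bend_and_break}) gives only that finitely many lines through $x$ share a fixed tangent direction at $x$, i.e.\ generic \emph{finiteness} of the tangent map $\tau_x$, and nothing in your proposal upgrades this to the generic injectivity/birationality you assert. Fortunately that claim, together with the whole second paragraph, can simply be deleted: once (i) and (ii) above are in place, Thm~3.6 applies verbatim, exactly as in the paper.
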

This theorem is an easy consequence of a characterisation of projective spaces by Kebekus
  \cite[Thm~3.6]{kebekus_families_of_singular_rational_curves},
  taking into account that in our case $\Gamma$ may be not normal.

\begin{proof}
   First let us reduce to the case in which $\Gamma$ is normal.
   So let $\mu\colon \Gamma' \to \Gamma$ be the normalisation,
      and pick two general points $x',y' \in \Gamma'$.
   Their images $\mu(x')$ and $\mu(y')$ are connected by a line $C \subset \Gamma$.
   Let $C'$ be the proper transform of $C$.
   Then $\mu|_{C'} \colon C' \to C$ is birational and $C$ is also a rational curve, 
      and the normalisation of $f  \colon \PP^1 \to C$ factorises through $C'$.
   So the degree of $C'$ with respect to $\mu^* L$ is also $1$.
   Thus it is sufficient to prove the theorem for normal $\Gamma$.

   The set of lines in the Chow variety of $\Gamma$ is closed by
   \cite[Thm~I.3.21]{kollar_book_rational_curves} 
   (or Proposition~\ref{prop_linear_spaces_and_Chow_variety_for_projective}\ref{item_linear_spaces_are_closed} and the comment following its proof for more details),
      therefore also special points $x$ and $y$ are connected by curves of degree $1$.
   Then this is a special case of
      \cite[Thm~3.6]{kebekus_families_of_singular_rational_curves}.
\end{proof}

To show Theorem~\ref{thm_dimension_of_the_space_of_special_lines},
  we suppose that there is a component $\ccB$ of the set of non-standard lines of dimension $3n-2$.
In particular, 
  a general element of $\ccB$ is a smooth rational curve (Proposition~\ref{prop_dimension_of_the_space_of_singular_lines}).
By results of Kebekus, there is a divisor $B$ on $X$ swept out by the lines from $\ccB$ (see Lemma~\ref{lem_ccB_x_is_a_union_of_irreducible_components}).
The main aim is to prove that $B$ is dominated by a family of linear subspaces of dimension $n$.
To construct these linear subspaces we use 
   Theorem~\ref{thm_many_lines} twice.
In the first place we construct a large family of linear subspaces of dimension $2$, that is planes.
Next we bundle together these planes, to obtain a family of linear subspaces of dimension $n$.
More precisely, the tangent spaces to $B$ naturally determine a distribution $G$ of rank $1$,
  that is, a line subbundle of $TB$ defined on an open dense subset of $U \subset B$.
Suppose that $c \in \ccB$ is a general non-standard line and $C \subset X$ is the corresponding curve in $X$.
We take the union of leaves of $G$ through points of $C$, and 
we let $\overline{\Gamma_c}$
   to be the Zariski closure of this union.
Then we show in Lemma~\ref{lemma_any_points_on_Gamma_c_are_connected_by_a_line}
   that every two points in  $\overline{\Gamma_c}$ are connected by a contact line.
Thus the normalisation of  $\overline{\Gamma_c}$  is a projective space $\PP^k$.
We carefully study the distribution $G$ restricted to $\overline{\Gamma_c}$
   and conclude using Lemma~\ref{lem_distribution_on_Pk} that the leaves of $G$ actually are lines from $\ccB$.
In particular, $\dim \overline{\Gamma_c} = 2$, and its normalisation is $\PP^2$.

This construction also equips each $\PP^2$ with a distinguished point $y$, and its image in $X$.
We consider $Y \subset X$ to be the union of all the distinguished points in $X$ obtained by varying $c \in \ccB$.
The critical step in the proof is the dimension count: we show that $\dim Y =n$, see Lemma~\ref{lem_dim_Y_eq_n}.
The conclusion is that there are many surfaces $\overline{\Gamma_c}$ with the same distinguished point $y$.
On the other hand the locus $P^y$ of these projective planes is always contained in the locus swept by lines through
  a fixed point $y$, which is known to have dimension at most $n$.
We use this information to show that two general points $x_1, x_2$ in $P^y$
  are contained in a single $\overline{\Gamma_c}$, whose distinguished point is $y$.
The line in the plane $\PP^2$ normalising $\overline{\Gamma_c}$ is the required line connecting $x_1$ with $x_2$.
Thus $P^y$ is normalised by a projective space, and its dimension is calculated to be $n$.
This is the way to construct the family of linear subspaces of dimension $n$,
  whose locus sweeps out the divisor $B$. 
  
We also show that there is exactly one such linear space through a general point of $B$, 
  and only a finite number of them through any point of $B$. 
This is used to compare the family to the normalisation of $B$.
Finally, we conclude using \cite[Prop.~4.10]{araujo_druel_codim1_del_Pezzo_foliations}
  or Proposition~\ref{prop_linear_spaces_and_Chow_variety_for_projective}\ref{item_families_of_linear_spaces_are_bundles},
  which characterises projective space bundles over normal varieties,
  by analogy to Fujita's characterisations for bundles on smooth varieties.
\section{Parameter spaces for lines and linear subspaces}\label{sect_preliminaries}

Throughout this article we suppose that $X$ is a complex analytic variety with a distinguished line bundle $L$. 
We will say $L$ is a \emph{polarisation} of $X$, as it will be used to measure degrees of some special rational subvarieties.
Although we will often assume $X$ is a projective manifold and $L$ is ample,
  for some of the statements below it is not necessary.
In particular, we do not want to assume $X$ is compact or nonsingular to obtain valid statements for open subsets of $X$, and subvarieties 
    $Y\subset X$ polarised by $L|_Y$.

\subsection{Barlet space and Chow variety}
\label{sect_barlet_and_chow}

In this subsection we overview the properties of the \emph{cycle space}, which in the context of complex geometry is called the 
\emph{Barlet space} \cite{barlet_espace_des_cycles},
while in algebraic geometry it is called the 
\emph{Chow variety} \cite[Section~I.3]{kollar_book_rational_curves}.
We are only interested in compact cycles, and in fact mainly in irreducible ones. The reducible ones (that is, either those with more than one components, or those with multiplicity higher than $1$ at one of the components) appear in our considerations mainly as limits of irreducible cycles (which are just compact subvarieties).
See also \cite[Chapter~IV]{barlet_magnusson_cycles_analitiques_I} or \cite[Chapter VIII, \S2]{grauert_peternell_remmert_several_complex_variables_VII}.

Throughout our proofs we exploit some minimal assumptions on the cycle spaces 
  (and also some other parameter spaces),
  so that the local geometry 
  of (for instance)  singular rational curves determines its global properties.
As an example, we want the dimension of the closure of the locus swept out by the singular rational curves to be determined by
  their infinitesimal deformations.
This is guaranteed if either $X$ is quasi-projective or $X$ is compact of Fujiki class $\ccC$.

Note that throughout this section, whenever we speak about \emph{pull-back} in the context of cycles, we always mean the \emph{Chow pull-back} 
  \cite[Def.~I.3.18]{kollar_book_rational_curves}, or \emph{cycle-theoretic base change} \cite[Def.~IV.3.1.1]{barlet_espace_des_cycles}.
Similarly \emph{fibre} refers to the cycle theoretic notion.
The main difference between the cycle theoretic and scheme theoretic pull-backs and fibres is that we ignore the scheme structure, 
  but remember the multiplicity of components instead.

We summarise a list of properties of the Chow variety and Barlet space that we are going to freely use.
The main purpose of the lengthy propositions below is to clarify the meaning of, for example, ``the set of singular rational curves of degree $1$ with respect to a line bundle'' and their irreducible components.
In particular, the proposition illustrates that these irreducible components have sensible structures of analytic or algebraic varieties. 
We restrict our attention to a compact analytic space of Fujiki class $\ccC$ 
   (that is, it is compact and bimeromorphically equivalent to a compact K\"ahler manifold) 
   and quasi-projective varieties.

We say a cycle is \emph{reducible} (respectively, \emph{irreducible}), 
  if  it consists of at least two (respectively, exactly one) components counted with their multiplicity. 
Note that the closed reduced analytic spaces 
  or algebraic subschemes appearing in 
  items \ref{item_trees_are_closed}--\ref{item_singularities_are_closed} 
  below can be reducible thus it is not correct to say 
  that they are varieties.

\begin{prop}\label{prop_Chow_and_Barlet}
  Suppose that $X$ and $\ccR$ are irreducible analytic varieties (respectively, algebraic varieties) and $L$ is a line bundle on $X$. 
  \begin{interruptedenumerate}
  \begin{insideenumerate}
    \item  \label{item_chow_pullback}
           If there is a holomorphic (respectively, algebraic) map from $\ccR$ to the Barlet space (respectively, Chow variety) of $X$,
           then there is $\ccU_{\ccR} \subset X\times \ccR$ consisting of the proper cycles represented by points in the image of points in $\ccR$.
           The map $\ccU_{\ccR} \to \ccR$ is proper.
    \item  \label{item_chow_functoriality} 
           Conversely, if $\ccU_{\ccR} \subset X\times \ccR$ is an analytic (or well defined) family of (proper) cycles in the sense of  \cite[Chapter VIII, Def.~2.5]{grauert_peternell_remmert_several_complex_variables_VII} 
           or \cite[I.3.10]{kollar_book_rational_curves},
           then there is a map from $\ccR$ to the Barlet space (respectively, Chow variety), such that $\ccU_{\ccR}$ is the  family constructed in \ref{item_chow_pullback}.
    \item  \label{item_the_degree_is_constant_on_components}
           In the situation of \ref{item_chow_pullback} and \ref{item_chow_functoriality} the dimension and degree with respect to $L$ of the cycles represented by the points of $\ccR$ are constant. 
  \end{insideenumerate}
  Furthermore, suppose that $X$ is compact of Fujiki class $\ccC$ (respectively, projective). 
  \begin{insideenumerate}
    \item  \label{item_fujiki_class_C_has_Barlet_compact}
           If $X$ is a compact manifold of Fujiki class $\ccC$, then the connected components of the Barlet space are compact of Fujiki class $\ccC$. 
    \item  \label{item_projective_has_Chow_projective}
           If $X$ is projective, then the connected components of the Chow variety are projective.
    \item  \label{item_trees_are_closed} 
           The set of reducible cycles 
              in every connected component of the Barlet space (respectively, Chow variety)
              is a closed reduced analytic subspace (respectively, a closed reduced algebraic subscheme) of that component.
           Thus the set of irreducible cycles is open in the cycle space, and in each irreducible component of the cycle space, 
              this open subset is either empty or dense.
    \item  \label{item_ratcurves_and_trees_closed}
           The set consisting of proper rational curves is a closed reduced analytic subspace (respectively, closed reduced algebraic subscheme)
              in each connected component of the set of irreducible cycles.
           Moreover, the closure of the set of proper irreducible rational curves in the cycle space is a closed reduced analytic subspace  (respectively, closed reduced algebraic subscheme).
    \item  \label{item_singularities_are_closed}
           The set of singular rational curves is closed  reduced analytic subspace (respectively, closed reduced algebraic subscheme) in each connected component of the set of proper rational curves.
  \end{insideenumerate}
  \end{interruptedenumerate}
\end{prop}

\begin{prf}
    Items \ref{item_chow_pullback} and \ref{item_chow_functoriality} are the merits of the definitions of cycle spaces, see for instance
          \cite[Chapter VIII, Thm~2.7]{grauert_peternell_remmert_several_complex_variables_VII}  or \cite[Thm~I.3.21]{kollar_book_rational_curves}.
    Further, the dimension part in \ref{item_the_degree_is_constant_on_components} is again part of the definition of the cycle space, 
          while the degree part is shown in \cite[Prop.~I.3.12]{kollar_book_rational_curves} for the algebraic case.
    For the analytic case, we imitate the algebraic proof.
    It is enough to argue locally on $\ccR$ and it is enough to consider the case when $\ccR$ is $1$-dimensional. 
    We may also normalise $\ccR$ and hence assume $\ccR$ is a holomorphic disc in $\CC$ around $0$.
    Now, with these assumptions, the components of $\ccU_{\ccR}$ are flat over $\ccR$.
    For flat maps the degree is preserved along the fibres and summing over the components we obtain the statement.

    Item~\ref{item_fujiki_class_C_has_Barlet_compact} is quoted in \cite[Chapter VIII, Prop.~3.17]{grauert_peternell_remmert_several_complex_variables_VII}, 
       and the details are attributed to \cite{fujiki_closedness_of_Douady_spaces}.
    Analogously, \ref{item_projective_has_Chow_projective} is shown in \cite[Thm~I.3.21.3]{kollar_book_rational_curves}. 
          
    Item~\ref{item_trees_are_closed} is an immediate conclusion from \cite[Prop.~IV.7.1.2]{barlet_magnusson_cycles_analitiques_I}.
    
    The set of rational $1$-dimensional cycles (in the sense of \cite{barlet_rational_1_dimensional_compact_cycles}, that is, those cycles, whose every component is a rational curve) forms a closed analytic subset of the cycle space by \cite{barlet_rational_1_dimensional_compact_cycles} 
       (which is projective by \ref{item_projective_has_Chow_projective}, if $X$ is projective).
    Irreducible rational curves form a Zariski open subset in the set of rational $1$-dimensional cycles by \ref{item_trees_are_closed}, which shows both claims of 
    item~\ref{item_ratcurves_and_trees_closed}.

    To show \ref{item_singularities_are_closed} suppose that $\ccR$ is the set of proper rational curves in the cycle space and $\ccU_{\ccR}$ is the universal family as in \ref{item_chow_pullback}.
    Let $S \subset \ccU_{\ccR}$ be the set of singular points of the fibres. It is a closed analytic or algebraic subspace as it is locally the zero locus of some minors of differentials.
    Its image under the proper map to the cycle space is again closed and analytic by Remmert's mapping theorem \cite[Chapter~III, Cor.~4.3]{grauert_peternell_remmert_several_complex_variables_VII} (in the algebraic geometry setup, it follows directly from the definitions of a proper morphism \cite[p.~100]{hartshorne} and the Zariski topology).
\end{prf}

We are also going to use the analogous properties for a quasi-projective variety $X$ with a projective compactification $\overline{X}$.
In this situation, the Chow variety of $X$ is the open subset of the Chow variety of $\overline{X}$ consisting of those cycles that do not intersect the boundary $\overline{X} \setminus X$.
Hence the proofs of the properties below boil down to just applying the projective case and restricting to the open subset.
The numeration in the statements below corresponds to the analogous statements in Proposition~\ref{prop_Chow_and_Barlet}. 
As before, the purpose of the proposition is to give a sensible structure of a variety to irreducible components of various sets parametrising singular or nonsingular cycles, 
   reducible or irreducible cycles, or rational curves, perhaps with prescribed additional properties.
\begin{prop}
  Suppose that $X$ is a quasi-projective variety. 
  \begin{enumerate}
    \setcounter{enumi}{4}        
    \item  \label{item_quasi-projective_has_Chow_quasi-projective}
           The connected components of the Chow variety of $X$ are themselves quasi-projective.
    \item  
           The set of reducible cycles 
              in every connected component of the Chow variety is Zariski closed, in particular it is a closed reduced algebraic subscheme of that component.
           Thus the set of irreducible cycles is open in the cycle space, and in each irreducible component of the cycle space, either empty or dense.
    \item  
           The set consisting of proper rational curves is Zariski closed  in each component of the set of irreducible cycles, 
             in particular, it is a reduced algebraic scheme.
    \item  
           The set of singular rational curves is Zariski closed  in each component of the set of proper rational curves.
           In particular, it is a reduced algebraic scheme.
  \end{enumerate}
\end{prop}
\noprf

Thus if $X$ is compact of Fujiki class $\ccC$ or quasi-projective, then for instance the set of irreducible singular rational curves is a union of its irreducible components. 
Say one of these components is $\ccS$ and suppose that $\overline{\ccY}$ is an irreducible component of the cycle space of $X$ 
   (if $X$ is compact of Fujiki class $\ccC$ or projective; in this case denote $\overline{X}:=X$) or of $\overline{X}$ (if $X$ is quasi-projective) containing $\ccS$.
Let $\ccY \subset \overline{\ccY}$ be the open dense subset of irreducible cycles contained in $X$.
Then $\ccS \subset \ccY$ and $\ccS$ is a closed analytic (respectively, algebraic) subvariety in $\ccY$.
Let $\overline{\ccS}$ be its closure in $\overline{\ccY}$ which is an irreducible analytic (respectively, algebraic) subvariety in a $\overline{\ccY}$.
   
The \emph{locus} $\overline{S} \subset \overline{X}$ swept by $\overline{\ccS}$ is the union of cycles in $\overline{\ccS}$,
   which also can be obtained as the image of the universal family $\ccU_{\overline{\ccS}}$ as in
   Proposition~\ref{prop_Chow_and_Barlet}\ref{item_chow_pullback}. 
By Remmert's mapping theorem it is a closed analytic subvariety (or closed algebraic subvariety).
Note that it is irreducible by construction.

Similarly, the \emph{locus} $S \subset X$ swept by $\ccS$ is the union of cycles in $\ccS$, or the image of the universal family $\ccU_{\ccS}$ over $\ccS$.
Since $\ccU_{\ccS}\subset \ccU_{\overline{\ccS}}$ is an open subset, a complement of a closed analytic subspace (respectively, algebraic subscheme),
the image $S$ is constructible by Chevalley-Remmert Theorem \cite[Thm~on~p291 and Rem.~on~p293]{lojasiewicz_intro_to_complex_analytic_geometry}.
In particular, $S$ is dense in $\overline{S}$ and contains an open subset also dense in $\overline{S}$.
So it makes sense to speak about the tangent space $T_s S$ to $S$ at a general point $s \in S$ and by Sard Theorem $T_s S$ 
   is the image of a tangent space to $\ccU_{\ccS}$ at a (general) point $ u \in \ccU_{\ccS}$ such that $u\mapsto s$.

Analogously, we may define the loci of other sensible families of subvarieties, such as; 
   proper rational curves through a fixed point $x\in X$, 
   rational curves with a non-standard splitting type of the tangent bundle (see \S\ref{sect_splitting_types}),
   linear subspaces (see \S\ref{sect_lines_and_linear_subspaces}).
The sensibility of a family of cycles $\ccR$ is guaranteed if its closure $\overline{\ccR}$ 
   in the cycle space of $\overline{X}$ is a closed reduced analytic subspace (respectively, closed reduced algebraic subscheme) 
   and also the boundary $\overline{\ccR}\setminus \ccR$ is a closed analytic subspace (respectively, closed algebraic subscheme).
In Corollary~\ref{cor_locus_of_lines_has_a_submanifold} we also show that 
   it is sometimes sensible to consider the loci of ``small'' families of curves.

\subsection{Lines and linear subspaces of a polarised analytic set}
\label{sect_lines_and_linear_subspaces}

Recall from Definition~\ref{defin_linear_subspace} the notions of line and linear subspace of a polarised complex analytic space.

\begin{defin}
A \emph{family of linear subspaces of dimension $k$} is:
\begin{itemize}
 \item a proper surjective morphism $\pi\colon \ccU_{\ccR} \to \ccR$ between reduced analytic spaces (respectively, algebraic varieties) 
          with all the (cycle theoretic) fibres isomorphic to $\PP^k$, and
 \item a map $\xi \colon \ccU_{\ccR} \to X$, 
\end{itemize}
such that all the images in $X$ of fibres of $\pi$ are linear subspaces and $\xi|_{\PP^k}\colon \PP^k \to X$ is the normalisation map of the image.
The map $\xi$ is called the \emph{evaluation map}. 
If $k=1$, we simply say a \emph{family of lines},  rather than a family of linear subspaces of dimension $1$.
 \end{defin}

If necessary, we may always replace $\ccR$ with its normalisation, and $\ccU_{\ccR}$ with the pullback, so it is harmless to assume that $\ccR$ is normal. 
Usually, we may also assume that $\ccU_{\ccR}$ is normal, as shown in Lemma~\ref{lem_can_take_U_normal}.
Solely for the purpose of this lemma we define a condition $(\star)$.
We say a morphism $\pi\colon \ccU \to \ccR$ of analytic spaces satisfies $(\star)$, if:
   \begin{itemize}
    \item[$(\star)$] each set theoretic fibre of $\pi$ is $\PP^k$ and there exists a line bundle on $\ccU$, which restricts to $\ccO(1)$ on each (set-theoretic) fibre.
   \end{itemize}

\begin{lemma}\label{lem_can_take_U_normal}
   Suppose that $\pi\colon \ccU \to \ccR$ is a morphism of analytic spaces that satisfies $(\star)$ and $\ccR$ is normal. 
   Let $\ccU^{norm}$ be the normalisation of $\ccU$.
   Then the composed map $\pi^{norm} \colon \ccU^{norm} \to \ccR$ satisfies $(\star)$.
\end{lemma}
\begin{prf}
   The normalisation map $\ccU^{norm} \to \ccU$ is finite and birational, 
      in particular it is finite and birational when restricted to a general fibre of $\pi^{norm}$ and $\pi$.
   Thus, it is an isomorphism of the general, reduced fibres.
   Let $L$ be the line bundle on $\ccU$, that restricted to the fibres is $\ccO(1)$,
      and $L^{norm}$ be its pullback to $\ccU^{norm}$.
   Then $L^{norm}$ has degree $1$ on a general fibre and thus on every fibre.
   Therefore $\pi^{norm}$ is birational on every fibre, and all the set-theoretic fibres are $\PP^k$.
   Furthermore, $L^{norm}$ restricts to $\ccO(1)$ on every fibre.
\end{prf}

The following lemma shows that a general fibre of a morphism of normal varieties is necessarily normal.

\begin{lemma}\label{lem_general_fibre_normal}
   Let $f \colon Y \to Z$ be a proper morphism of irreducible normal analytic varieties (or irreducible normal complex algebraic varieties). 
   Then there is an open dense subset $U\subset Z$ such that for all $u\in U$ the fibre $f^{-1}(u)$ is normal.
   Moreover, $U$ may be chosen as a complement of a closed analytic subset of $Z$, respectively, of a closed algebraic subvariety.
\end{lemma}

\begin{prf}
   By Hironaka's Flattening Theorem \cite[Cor.~1]{hironaka_flattening_theorem_in_complex_analytic_geometry} we may assume in addition that $f$ is flat,
     because the flattening procedure does not change the general fibre.
   Let $N(f) := \set{p \in Y \colon Y_{f(p)} \text{ is normal at $p$}} \subset Y$, where $Y_{f(p)}$ is the fibre of $f$ over $f(p)$.
   The set $N(f)$ is a (closed) analytic subset of $Y$ by \cite[Prop.~3.22 on p.~160]{fischer_complex_analytic_geometry}, 
      or a (closed) algebraic subvariety of $Y$ by \cite[Thm~12.1.6(iv)]{ega4_3}. 
   By Remmert's mapping theorem \cite[Chapter~III, Cor.~4.3]{grauert_peternell_remmert_several_complex_variables_VII} 
     the image $ f(N(f))$ is a closed analytic subset of $Z$. 
   In the algebraic case $f(N(f))$ is a closed algebraic subset of $Z$ by the definition of proper morphism, see for instance \cite[Chapter~II.4]{hartshorne}.
   It remains to prove that $f(N(f)) \ne Z$.

   Since our base field $\CC$ is algebraically closed, ``normal'' is the same as ``geometrically normal''.
   We are going to reduce the analytic setting to the local algebraic setting using completions.
   Without loss of generality, $Z$ is smooth (by replacing $Z$ with its smooth locus).
   Further, we may argue locally and replace $Z$ with a local complete neighbourhood of a smooth point $z \in Z$ and $Y$ with its preimage.
   That is $Z \simeq \Spec\CC[[\fromto{z_1}{z_n}]]$.
   Further, since normality is a local property, we may replace $Y$ with a completion of the local ring of a closed point in $y \in Y$.
   
   Thus we have a dominant morphism $Y \to Z$ of local Noetherian schemes that corresponds to an inclusion of algebras 
     $\CC[[\fromto{z_1}{z_n}]] \to \ccO_Y(Y)$.
   The claim of the lemma is that the preimage $Y_{\eta}$ of the generic point $\eta\in Z$ is normal.
   Indeed, $Y_{\eta} = \Spec (\CC[[\fromto{z_1}{z_n}]] \setminus \{0\})^{-1}\ccO_Y(Y)$, 
      which is normal, as it is a spectrum of a multiplicative set times an integrally closed ring.
\end{prf}

The following proposition explains the relation between the families of linear subspaces and the corresponding subvarieties of the Barlet space (respectively, the Chow variety).

\begin{prop}\label{prop_linear_spaces_and_Chow_variety}
   Let $X$ be a complex analytic variety (respectively, complex algebraic variety) with a line bundle $L$.
   \begin{enumerate}
    \item  \label{item_families_of_linear_spaces_determine_morphism_to_Chow}
   Suppose that $\pi\colon \ccU_{\ccR} \to \ccR$ is a family of linear subspaces of $(X,L)$ with evaluation map $\xi \colon \ccU_{\ccR} \to X$.
   Suppose in addition that $\ccR$ is normal. 
   Then there is a morphism from $\ccR$ to the Barlet space of $X$ (respectively, to the Chow variety of $X$), 
       such that for all points $r\in \ccR$, the image of $r$ in the Barlet space (respectively, in the Chow variety) of $X$ 
       is the point representing the linear subspace $\xi(\pi^{-1}(r))$.
    \item    \label{item_families_of_linear_spaces_from_Chow_variety_on_open} Suppose that $\ccR'$ is an irreducible analytic subvariety of the Barlet space (respectively, the Chow variety) of $X$, 
     whose all elements represent linear subspaces of $(X,L)$ of fixed dimension $k$. 
   Then there exists a family of linear subspaces $\pi\colon \ccU_{\ccR} \to \ccR$, such that $\ccR \subset \ccR'$ is an open dense subset and for each $r\in \ccR$,
     the image $\xi(\pi^{-1}(r))$ is the linear subspace corresponding to the point $r \in \ccR'$ of the Barlet space (respectively, Chow variety).
   \end{enumerate}
\end{prop}
\begin{prf}
   To see the first item, 
      consider the incidence subvariety $\ccU' \subset \ccR \times X$, $\ccU' := \set{(r,x) \mid x\in \xi(\pi^{-1}(r))}$, 
      that is, $\ccU' = (\pi \times \xi) (\ccU_{\ccR})$. Note that the projection $\ccU' \to \ccR$ is proper and $\ccU'$ is reduced as it is an image of reduced $\ccU$.
   Hence $\ccU' \to \ccR$ is a well defined family of cycles in the sense of \cite[Def.~1.3.10]{kollar_book_rational_curves} 
      (note that property (1.3.10.4) is implied by the normality and Theorem~1.3.17 in the same book), 
      or analytic family of cycles in the sense of \cite[Chapter VIII, Def.~2.5]{grauert_peternell_remmert_several_complex_variables_VII}. 
   Thus by the universal property of Chow variety or Barlet space (Proposition~\ref{prop_Chow_and_Barlet}\ref{item_chow_functoriality}),
      there is a morphism from $\ccR$ to the cycle space of $X$ satisfying the required properties.
   
   To see the second item, we may assume that $\ccR'$ is smooth by restricting to an open dense subset. 
   Let $\ccU'$ be the normalisation of the universal family of the Barlet space (respectively, Chow variety) restricted to $\ccR'$,
     so that $\ccU' \to \ccR'$ is a proper surjective morphism with fibres mapped onto linear subspaces of $X$.
   Since both $\ccU'$ and $\ccR'$ are normal, the general fibre is also normal by Lemma~\ref{lem_general_fibre_normal}.
   Define $\ccR \subset \ccR'$ to be an open dense subset containing only points with normal fibre, 
     and $\ccU$ is the restriction of $\ccU'$ to $\ccR$. 
   The map $\ccU \to \ccR$ is proper, since it is a base change of a proper map $\ccU' \to \ccR'$.
   Let $r \in \ccR$ and $\ccU_r$ be the fibre.
   Then the evaluation map $\ccU_r \to X$, whose image is the linear subspace corresponding to $r$, is finite and birational. 
   Since $\ccU_r$ is normal, it must be the normalisation map and $\ccU_r \simeq \PP^k$. 
   This shows that $\ccU \to \ccR$ is a family of linear subspaces. 
\end{prf}

For the rest of this subsection we suppose that $X$ is in addition projective and $L$ is ample.
In this setting we may strengthen Proposition~\ref{prop_linear_spaces_and_Chow_variety} using \cite[Prop.~4.10]{araujo_druel_codim1_del_Pezzo_foliations}:
\begin{prop}\label{prop_linear_spaces_and_Chow_variety_for_projective}
   Let $X$ be a projective variety with a line bundle $L$.
   \begin{enumerate}
      \item \label{item_families_of_linear_spaces_are_bundles}
              Suppose that $\pi\colon \ccU_{\ccR}\to \ccR$ is a family of linear subspaces of $(X,L)$ of dimension $k$ with $\ccR$ and $\ccU_{\ccR}$ normal
                 and with evaluation map $\xi\colon\ccU_{\ccR} \to X$.
              Let $\ccE := \pi_*(\xi^*L)$. 
              Then $\ccE$ is a vector bundle of rank $k+1$ on $\ccR$ and $\ccU_{\ccR} \simeq \PP(\ccE^*)$ with $\ccO_{\PP(\ccE^*)}(1) = \xi^*L$.
      \item \label{item_families_of_linear_spaces_from_Chow_variety} 
             Suppose that $L$ is ample and $\ccR$ is an irreducible normal variety with a morphism to the Chow variety of $X$,
             such that all points in the image represent linear subspaces of $(X,L)$ of a fixed dimension $k$.
             Then there exists a family of linear subspaces $\pi\colon \ccU_{\ccR} \to \ccR$ with evaluation map $\xi$, 
                such that for each $r\in \ccR$, the image $\xi(\pi^{-1}(r))$ is the linear subspace corresponding to the image of $r$ in the Chow variety.
      \item \label{item_linear_spaces_are_closed}
             Suppose that $L$ is ample. The set of linear subspaces of dimension $k$ is a Zariski closed subset of the Chow variety of $X$. 
   \end{enumerate}
\end{prop}

In the following proof and in the further sections of this article we will use the \emph{$\Hom$-scheme} $\Hom(Y\to X)$.
It is the scheme that parametrises all morphisms $Y \to X$ 
between two varieties or schemes.
Formally, its definition uses \cite[Def.~I.1.9]{kollar_book_rational_curves}. 
The construction of $\Hom(Y\to X)$ as an open subscheme of the Hilbert scheme of $Y\times X$ is presented in 
\cite[Thm~I.1.10]{kollar_book_rational_curves} (to each morphism one associates its graph).
See also \cite[\S4.6.6]{sernesi_Deformations_of_algebraic_schemes}
For the existence of $\Hom$-scheme in the setting of analytic spaces see  \cite[Thm~VIII.1.5]{grauert_peternell_remmert_several_complex_variables_VII} 
or \cite[\S10.2, Thme 1]{douady_Douady_spaces}.

\begin{prf}
   To prove \ref{item_families_of_linear_spaces_are_bundles} 
   observe that $\xi^*L$ is ample on every scheme theoretic fibre of $\pi$, since it is $\ccO_{\PP^k}(1)$ on every set theoretic fibre and ampleness does not depend on non-reduced part of the scheme structure \cite[Prop.~1.2.16(i)]{lazarsfeld}.
   Thus $\xi^*L$ is $\pi$-ample by \cite[Thm~1.7.8]{lazarsfeld} 
      and $\pi$ is equidimensional.
   The claim of \ref{item_families_of_linear_spaces_are_bundles} follows from 
      \cite[Prop.~4.10]{araujo_druel_codim1_del_Pezzo_foliations}.
   
   Now we show \ref{item_families_of_linear_spaces_from_Chow_variety}.
   Let $\ccU_{\ccR}$ be the normalisation of the pullback of the universal family from the Chow variety.
   Let $\xi\colon \ccU_{\ccR} \to X$ be the composed map and $\pi\colon \ccU_{\ccR}\to \ccR$ be the projection.
   We continue as in the proof of \ref{item_families_of_linear_spaces_are_bundles}: 
      $\xi^*L$ is ample on all fibres of $\pi$, since $\xi$ restricted to such fibre is finite. 
   Thus $\xi^*L$ is $\pi$-ample \cite[Thm~1.7.8]{lazarsfeld} and equidimensional, with general fibre $\PP^k$ by 
      Proposition~\ref{prop_linear_spaces_and_Chow_variety}\ref{item_families_of_linear_spaces_from_Chow_variety_on_open},
      and $\xi^*L|_{\PP^k} \simeq \ccO(1)$,
      thus by \cite[Prop.~4.10]{araujo_druel_codim1_del_Pezzo_foliations} all fibres are $\PP^k$ and $\pi$ is a family of linear subspaces. 
      
   Finally, \ref{item_linear_spaces_are_closed} is also similar:
   let $\ccR'$ be the subset of the Chow variety consisting of linear spaces of dimension $k$.
   By \cite[Thm~I.3.21]{kollar_book_rational_curves} the set $\ccR'$ is contained in a projective reduced scheme parametrising cycles of degree $1$.
   We have to prove that $\ccR'$ is Zariski closed.
   First we show that it is constructible. 
   Indeed, consider the $\Hom$-scheme $\Hom (\PP^k \to X)$ and inside this $\Hom$-scheme the reduced subscheme of \emph{linear} homomorphisms $\Hom^{lin}(\PP^k \to X)$.
   This subscheme consists of the points representing homomorphisms $\phi \colon \PP^k \to X$ such that $\phi^* L \simeq \ccO_{\PP^k}(1)$.
   Let $\Hom^{lin, n}$ be the normalisation of $\Hom^{lin}(\PP^k \to X)$. 
   Then the projection map $\Hom^{lin,n} \times \PP^k \to \Hom^{lin,n}$ together with evaluation map   
     $\Hom^{lin,n}\times \PP^k \to  X$ is a family of linear subspaces of $X$.
   By Proposition~\ref{prop_linear_spaces_and_Chow_variety}\ref{item_families_of_linear_spaces_determine_morphism_to_Chow} there is a morphism 
     from $\Hom^{lin,n}$ to the Chow variety of $X$.
   By construction this morphism is surjective onto $\ccR'$. 
   In particular, $\ccR'$ is constructible and $\ccR'$ contains a Zariski open subset which is dense in closure of $\ccR'$.
   Let $\ccR$ be the normalisation of the closure of $\ccR'$,
      $\pi\colon \ccU_{\ccR} \to \ccR$ the Chow pullback of the universal family and suppose that $\xi$ is the composed evaluation map.
   Clearly $\pi$ is equidimensional and  $\xi^*L$ is $\pi$-ample as above, with general fibre $\PP^k$, and $\xi^*L|_{\PP^k} \simeq \ccO_{\PP^k}(1)$.
   Again by \cite[Prop.~4.10]{araujo_druel_codim1_del_Pezzo_foliations} all fibres are $\PP^k$ and $\pi$ is a family of linear spaces.
   Thus all points in the closure of $\ccR'$ represent $k$-dimensional linear subspaces.
\end{prf}

Therefore if $X$ is projective and $L$ is ample, then we may briefly say that 
 the reduced scheme (that is, a union of algebraic varieties) parametrising lines or linear subspaces is also projective.
In particular, the property of being connected by a line is Zariski closed,
   that is, the set of pairs $(x,y) \in X \times X$,
   such that $x$ and $y$ are connected by a line is Zariski closed in $X \times X$.

\subsection{Lines on projective varieties}
\label{sect_varieties_with_many_lines}

We have the standard consequence of the Mori's Bend and Break Lemma:

\begin{lemma}[{\cite[Cor.~II.5.5.2]{kollar_book_rational_curves}}]\label{lem_bend_and_break_1_point}
   Suppose that $X$ is a projective variety and $L$ is an ample line bundle on $X$.
   Consider a positive dimensional family of distinct lines, that is a map $\ccU_{\ccR} \to \ccR$ as above, 
      with $\ccR$ irreducible, $\dim \ccR>0$, and distinct fibres $\PP^1 \subset \ccU_{\ccR}$ mapped to distinct lines in $(X,L)$.
   Then the family may have at most $1$ common point.
\end{lemma}

The following is a generalisation, which is also standard, but hard to reference explicitly.
In fact, the proof is very similar to the proof of Mori's bend and break theorem.
It treats the case when the family has a common point and claims that the lines in the family have distinct tangent directions.
Similar statements appear in \cite[Thms~1.3 and 1.4]{kebekus_kovacs_are_rational_curves_determined_by_tangents}, 
   but here we do not assume that the point $x$ is general, or that the tangent direction is general.
   
\begin{lemma}\label{lem_bend_and_break_tangent_directions}
   Suppose that $X$ is a projective variety and $L$ is an ample line bundle on $X$.
   Consider a positive dimensional family of distinct lines on $(X,L)$ 
     parametrised by a proper and irreducible $\ccR$ with $\dim \ccR>0$ 
     and a point $x \in X$ common to all the lines in the family.
   If all the lines have at worst nodal singularities at $x$,
     then there are at most finitely many lines in this family that have a fixed tangent direction at $x$.
\end{lemma}

\begin{prf}
  Suppose on the contrary that there is a positive dimensional set of lines with a common tangent direction.
  Without loss of generality, we may assume that $\ccR$ is a projective curve and all the lines in the family have a common tangent direction.
  Normalising $\ccR$ we may assume the curve is smooth.
  By Lemma~\ref{lem_can_take_U_normal} we may also assume the total space $\ccU_{\ccR}$ for the family is normal.
  By Proposition~\ref{prop_linear_spaces_and_Chow_variety_for_projective}\ref{item_families_of_linear_spaces_are_bundles}
    the total space $\ccU_{\ccR}$ is a projectivisation of a vector bundle $\ccE$ of rank $2$ over $\ccR$, $\ccU_{\ccR} \simeq \PP(\ccE)$.
  The evaluation map $\xi \colon \PP(\ccE) \to X$ contracts the image of a section $\sigma\colon \ccR\to\PP(\ccE)$ to $x\in X$.
  
  To make a further simplification, we may replace $X$ with the image of the evaluation map $\xi$. 
  Hence $X$ is a surface.
  We may also replace $X$ by its normalisation, that is, assume $X$ is normal.
  
  In this setting, let $\tilde{X}$ be the blow up of $X$ at $x$ with an exceptional divisor which is the projectivisation of the tangent cone of $X$ at $x$.
  In particular, the strict transform of each line in the family passes through a single point $\tilde{x}$ by our assumption about the tangent directions.
  Let $D\subset \tilde{X}$ be an irreducible component of the exceptional divisor of the blow up containing $\tilde{x}$.

  Let $\tilde{\xi}\colon \tilde{\ccU}\to \tilde{X} $ be the minimal resolution of the rational map $\PP(\ccE) \to X \dashrightarrow \tilde X$:
  \[
     \xymatrix{  \tilde{\ccU} \ar[r]^{\tilde{\xi}} \ar[d] & \tilde{X} \supset D\ar[d]^{\text{blow up of $x$}} \\
                 \PP(\ccE) \ar[r]^{\xi} \ar[d]^{\pi} & X \ni x  \\
                 \ccR \ar@/^/[u]^{\sigma}}
  \]
  Thus $\tilde{\ccU}$ is a blow up of the smooth ruled surface $\PP(\ccE)$ in several points $\fromto{s_1}{s_k}$. 
  Consider the strict transform of the section $\sigma(\ccR)$. 
  It is contracted by the morphism to $\tilde{X}$ to the point $\tilde{x}$.
  We claim that none of the blown up point $s_i$ is on the section $\sigma(\ccR)$.
  Otherwise, let $C$ be the fibre of $\pi$ through such $s_i \in \sigma(\ccR)$. 
  The strict transform $\tilde{C}$ of $C$ in $\tilde{\ccU}$ is disjoint from the strict transform of $\sigma(\ccR)$,
     and by minimality of the resolution, $\tilde{\xi}(\tilde{C})$ is also disjoint from $\tilde{x}$, a contradiction.
  Thus the rational map $\PP(\ccE) \dashrightarrow \tilde X$ is regular near $\sigma(\ccR)$.
  
  It follows that the preimage of $D$ under $\PP(\ccE) \dashrightarrow \tilde X$ has an irreducible component $Q$ which intersects properly $\sigma(\ccR)$, and is contractible in $\PP(\ccE)$.
  Elementary intersection theory shows that such divisor cannot exist on the ruled surface $\PP(\ccE)$. 
  That is, suppose that $Q$ is numerically equivalent to $\alpha \sigma(\ccR) + \beta F$, where $F$ is a class of a fibre of $\pi$.
  Then:
  \begin{align*}
       \sigma(\ccR)^2 &<0,&  \sigma(\ccR)\cdot F &=1,& F^2& =0, \\ 
       \alpha(\alpha \sigma(\ccR)^2 + 2 \beta) = Q^2 &<0, & \alpha \sigma(\ccR)^2 + \beta = Q \cdot \sigma(\ccR)&>0, & \alpha = Q \cdot F&\ge 0,
  \end{align*}
  which has no solutions for $\alpha$ and $\beta$.
\end{prf}

The following conclusion can be informally interpreted as follows.
In the projective setting, if $\ccR$ is a family of lines through a fixed point, then for any sufficiently small locally closed analytic space $\ccR'$ containing a general point of $\ccR$ the locus of lines from $\ccR'$ 
  is generically an analytic locally closed submanifold. In particular, it makes sense to discuss the tangent space of such locus, as we do in Section~\ref{sect_first_jump}.

\begin{cor}\label{cor_locus_of_lines_has_a_submanifold}
   Suppose that $X$ is a projective variety with an ample line bundle $L$.
   Consider a family of lines $\ccU \to \ccR$  such that $\ccR$ is irreducible and projective and the evaluation map 
      $\xi\colon\ccU \to X$ is generically finite to one 
   Fix a general point $c\in \ccR$ and any locally closed analytic submanifold $\ccR' \subset \ccR$ such that $c\in \ccR'$.
   Let $C \subset X$ denote the curve corresponding to $c$.
   Denote by $R' \subset X$ the locus of $\ccR'$, that is, $R'$ is the union of lines in $X$ that are represented by points of $\ccR'$.
   Then $R'$ contains locally closed submanifold $T$ of $X$ 
      that contains a Zariski open subset of $C$, 
      and such that $\dim T=\dim \ccR' +1$. 
   In particular, the Zariski closures of $R'$ and $T$ coincide.
\end{cor}

Note that indeed,
    if $\ccR$ is a family of pairwise different 
    lines passing through a fixed point $x\in X$ then the evaluation map 
      $\xi\colon\ccU \to X$ is generically finite to one by Lemma~\ref{lem_bend_and_break_1_point}, 
      thus the assumption of the corollary is satisfied.

\begin{prf}
   For simplicity, assume $\xi\colon \ccU \to X$ is dominant (hence surjective) by replacing $X$ with the image of $\xi$.
   The evaluation map $\xi\colon \ccU \to X$ is generically {\'e}tale by \cite[Lem.~III.10.5]{hartshorne}.
   Denote by $\ccU^0 \subset \ccU$ the open subset where $\xi$ is {\'e}tale. 
   Since $\xi$ is projective and generically finite to one, 
      the image $Z:=\xi(\ccU \setminus \ccU^0) \subset X$ is closed and strictly contained in $X$ (which is irreducible).
   Let $\ccU^1\subset \ccU^0$ be the preimage of the complement of this strict subset, 
   \[
     \ccU^1 =  \xi^{-1} \left(X \setminus Z\right).
   \]
   Thus $\ccU^1$ is a Zariski open dense subset of $\ccU$, while $\xi(\ccU^1) = X \setminus Z$ is Zariski open and dense in $X$. 
   The restricted map $\xi^1 \colon \ccU^1 \to \xi(\ccU^1)$ is:
   \begin{itemize}
    \item {\'e}tale, since it is a restriction of the {\'e}tale map $\xi|_{\ccU^0}$ to an open subset, and 
    \item projective, since it is the base change of the projective map $\xi$ under the open embedding $X \setminus Z \subset X$ \cite[Tag~01WF]{stacks_project},
    \item finite by \cite[Cor.~12.89]{gortz_wedhorn_algebraic_geometry_I},
    \item a finite topological covering.
   \end{itemize}

   Let $\PP^1_{c} \subset \ccU$ denote the preimage of $\set{c}$ under $\ccU\to \ccR$, so that $C=\xi(\PP^1_c)$. 
   Since $c$ is general in $\ccR$, $\ccU^1 \cap \PP^1_{c}$ is a non-empty (hence dense) Zariski open subset of $\PP^1_c$,
   and thus also  $\ccU^1 \cap \ccU|_{\ccR'}$ is (Euclidean) open and dense in $\ccU|_{\ccR'}$.
   Consider 
   \[
     (\xi^1)^{-1}\left(\xi^1(\ccU^1 \cap \PP^1_{c})\right) = (\ccU^1 \cap \PP^1_{c}) \sqcup Y,
   \]
   where $Y\subset \ccU^1$ is a (Zariski) closed submanifold.
   By the topological $T_4$-axiom for $\ccU^1$ considered with the Euclidean topology,
   there exists a Euclidean open subset $\ccU^2 \subset \ccU^1$ which separates $\ccU^1 \cap \PP^1_{c}$ from $Y$, or more precisely, 
     $\ccU^1 \cap \PP^1_{c} \subset \ccU^2$ and the Euclidean closure $\overline{\ccU^2}$ is disjoint from $Y$.
     
   Denote $T^2:=\xi^1(\ccU^2 \cap \ccU|_{\ccR'})$, 
     which locally is a finite union of locally closed analytic submanifolds of dimension equal to $\dim \ccU|_{\ccR'} = \dim \ccR +1$, 
     hence $T^2$ is a locally analytic subvariety of $X$ of the same dimension. 
   Moreover, $T^2 \subset R'$, $T_2$ contains a Zariski open dense subset of $C$, 
      and $T_2$ is smooth at a general point of $C$ by the choice of $\ccU^2$.
   Hence the smooth locus $T$ of $T^2$ satisfies the required properties.
\end{prf}

\section{Distributions}\label{sect_distributions}

In this subsection we summarise some basic material about distributions.
We essentially follow the convention of \cite[\S2]{hwang_mok_birationality},
  where a distribution is an equivalence class of subbundles defined on some open subsets.

\begin{defin}\label{def_distribution}
   Suppose that $E$ is a vector bundle on an analytic space or an algebraic variety $X$.
   Consider a pair $(G_U, U)$, where $U \subset X$ is an open dense subset and $G_U$ is a vector subbundle of $E|_{U}$
   (in particular, $G_U$ is itself  a vector bundle on $U$).
   Two pairs $(G_U, U)$ and $(G'_{U'}, U')$ are equivalent if and only if $G_U|_{U\cap U'} =G'_{U'}|_{U\cap U'}$ as subbundles of $E|_{U\cap U'}$.
   A \emph{distribution} $G$ in $E$ (also denoted $G \subset E$) is an equivalence class of such pairs.
\end{defin}
An alternative definition of a distribution is as a subsheaf.
\begin{lemma}
   Let $X$, $E$ be as in the definition above. 
   Given a distribution $G \subset E$, we can define its sheaf (also denoted $G$) of sections as the subsheaf of sections of $E$ whose images over $U$ are contained in $G_U$, whenever $(G_U, U)$ is a pair in the equivalence class $G$.
   The quotient sheaf $E/G$ is torsion free.
   Conversely, a subsheaf $G \subset E$, such that $E/G$ is torsion free, uniquely determines a distribution in $E$, whose sheaf of sections is $G$.
\end{lemma}
The proof is elementary, and since we are not going to use the sheaf definition, we skip the proof.

Let $X$, $E$, and $G$ be as in Definition~\ref{def_distribution}. 
The \emph{rank} of $G$ is the rank of $G_U$ as the vector bundle on $U$.
Note that rank does not depend on the choice of $U$ as any two open dense subsets must intersect.
We let $U(G)$ be the maximal open (dense) subset of $X$ such that 
a pair $(G_{U(G)}, U(G))$ is in the equivalence class $G$.
Note that if $X$ is normal, then $X \setminus U(G)$ is always of codimension at least $2$ in $X$.
In particular,  if $X$ is a smooth curve, then $G$ is always a vector subbundle of $E$.

The word ``distribution'' is usually associated with a vector subbundle of a tangent bundle.
In this article we need a slightly more general situation: we will also consider distributions (for example) in
the restriction of the tangent bundle $TX|_Y$ or in the normal bundle $N_{Y \subset X}$, where $Y$ is a closed subvariety of $X$.

All sorts of natural operations can be applied to distributions.
If $G_1$ and $G_2$ are distributions in $E$, then $G_1 + G_2$ and $G_1 \cap G_2$ also are.
If $f \colon Z \to X$ is a morphism, and the image of $f$ intersects $U(G)$,
  then $f^* G$ is a distribution in $f^*E$, etc.
We write $G_1 \subset G_2$, if the inclusion holds over $U(G_1) \cap U(G_2)$.

One of the situations we will often consider is when $Y \subset X$ is a subvariety or an analytic subspace,
   and the distribution is $G\subset TX|_{Y}$.
For example, the tangent bundle $T Y \subset TX|_{Y}$ is a distribution
  (note that $Y$  need not be smooth, $U(T Y)$ is the smooth locus of $Y$).
We will say that a subvariety, or an analytic subset $Z \subset Y$ is \emph{$G$-integral},
   if $Z$ intersects $U(G)$ and $T Z \subset G|_{Z}$ as distributions in $TX|_Z$.
We say $Z$ is a \emph{leaf} of $G$, if $Z$ is $G$-integral and $\dim Z = \rk G$.

\subsection{Singularities and rank \texorpdfstring{$1$}{1} distributions}
In Section~\ref{sect_divisors_of_non_standard_lines} we will work with a rank $1$ distribution on a projective variety $B$, which will be restricted to various subvarieties in $B$ (or their normalisations).
In that setting $B$ is singular, but nevertheless, we will slightly abuse (or abbreviate) our notation and write $G \subset TB$ to mean the following:
\begin{notation}\label{notation_distribution_on_singular}
   Whenever $B \subset X$ is a singular subvariety of a smooth variety $X$ and $G$ is a distribution $G \subset TX|_B$ such that $G \subset TB$ as a distributions in $TX|_B$, 
   we simply write $G \subset TB$, even though $TB$ is not defined everywhere on $B$.
\end{notation}
   Note that the choice of smooth $X$ containing $B$ is irrelevant to the questions about the distribution $G\subset TB$.
   In particular, we do not need to mention $X$ explicitly.

If $G\subset TB$ is a rank $1$ distribution, 
  then  $G$ is a \emph{foliation}, that is analytically locally there exist leaves of $G$ through general points of $B$. 
More precisely, such leaves exist  through any (smooth) point of $U(G)$, and they are locally unique 
  (that is, if $\Delta_1\subset B$ and $\Delta_2 \subset B$ are two leaves containing a common point $x \in \Delta_1\cap\Delta_2$,
  such that $x\in U(G)$ is a smooth point, then $\Delta_1 \cap \Delta_2$ is also a leaf and it is open in both $\Delta_1$ and $\Delta_2$).
In this subsection we review some elementary lemmas about such distributions.

The following lemma (in a more general setting) is found in \cite[Thm~3.8]{deserti_cerveau_foliations_and_groups} or in \cite[\S4.1]{araujo_druel_on_Fano_foliations}.
\begin{lemma}\label{lem_distribution_on_Pk}
   Suppose that $G \subset T \PP^k$ is a rank $1$ distribution on a projective space~$\PP^k$,
      which after the restriction to a general line $\PP^1 \subset \PP^k$
      is
   \[
      \ccO_{\PP^1}(1) \subset T \PP^k|_{\PP^1} \simeq
      \ccO(1^{k-1}, 2).
   \]
   Then there exists a point $y \in \PP^k$ such that all the lines through $y$ are tangent to $G$,
   that is, the leaves of $G$ are those lines. In particular, the leaves of $G$ are algebraic.
\end{lemma}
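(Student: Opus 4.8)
The plan is to reinterpret the distribution $G$ as a single twisted vector field and read off its zero locus. First I would replace $G$ by its saturation inside $T\PP^k$: since $T\PP^k$ is locally free and $G$ is a rank $1$ subbundle over the open dense set $U(G)$ whose complement has codimension at least $2$, the saturation is a reflexive rank $1$ sheaf on the smooth variety $\PP^k$, hence a line bundle. As $\Pic \PP^k$ is generated by $\ccO(1)$, this line bundle is $\ccO(d)$ for some integer $d$, and we obtain a nonzero sheaf map $\phi\colon \ccO(d) \to T\PP^k$ which over $U(G)$ recovers the inclusion $G \subset T\PP^k$. A general line $\PP^1 \subset \PP^k$ is contained in $U(G)$ and avoids the locus where the quotient of $T\PP^k$ by the saturation fails to be locally free, so $\ccO(d)|_{\PP^1} = G|_{\PP^1} \simeq \ccO_{\PP^1}(1)$ by hypothesis; thus $d=1$.

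Twisting $\phi$ by $\ccO(-1)$ turns it into a nonzero global section $s \in H^0(\PP^k, T\PP^k(-1))$. Next I would compute this cohomology group from the Euler sequence $0 \to \ccO(-1) \to \ccO^{\oplus(k+1)} \to T\PP^k(-1) \to 0$: since $H^0(\ccO(-1)) = H^1(\ccO(-1)) = 0$, the restriction map gives an isomorphism $H^0(T\PP^k(-1)) \cong \CC^{k+1}$, the section attached to a constant vector $a = (a_0, \dots, a_k)$ being the image of $a$ under $\ccO^{\oplus(k+1)} \to T\PP^k(-1)$. In the Euler description $T\PP^k = \ccO(1)^{\oplus(k+1)}/\ccO$, with $\ccO \to \ccO(1)^{\oplus(k+1)}$ given by $1\mapsto(x_0,\dots,x_k)$, the fibre of $T\PP^k(-1)$ at a point $[x]$ is $\CC^{k+1}/\langle x\rangle$, so $s$ vanishes at $[x]$ precisely when $a$ is proportional to $(x_0,\dots,x_k)$, i.e.\ exactly at the single point $y := [a_0 : \cdots : a_k]$.

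Finally I would identify the leaves. For $[x] \neq y$, the value of $G$ at $[x]$ is the image of the constant vector $a$ in the fibre $T_{[x]}\PP^k = \CC^{k+1}/\langle x\rangle$, which is exactly the tangent direction at $[x]$ of the line joining $[x]$ to $y$. Hence $G$ is tangent to each line through $y$ wherever it is defined, so every such line is $G$-integral of dimension $1 = \rk G$, i.e.\ a leaf; conversely, since through each $[x]\neq y$ there is a unique such line and $G$ has rank $1$, these exhaust the leaves. In particular the leaves are algebraic, as claimed.

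The computation is essentially routine once the reduction to a global section of $T\PP^k(-1)$ is in place, after which the explicit description of $H^0(T\PP^k(-1))$ does all the work. The only point requiring genuine care is the first step: extending $G$ across the codimension $\geq 2$ bad locus via saturation and pinning down the twist $d = 1$ from the behaviour on a general line. This is where I expect the main (though modest) obstacle to lie, since one must check that a general line both lies in $U(G)$ and restricts the saturation to a genuine subbundle, so that the numerical hypothesis on the splitting type indeed forces $d=1$.
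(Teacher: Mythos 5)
Your proof is correct, but it takes a genuinely different route from the paper. The paper also starts from the Euler sequence, but only restricted to each general line: the preimage of $G|_{\PP^1}$ in $\ccO(1)\otimes V$ is $\ccO\oplus\ccO(1)\otimes\hat y$, which singles out a point $y$ \emph{a priori depending on the chosen line}; the bulk of the paper's proof is then a delicate incidence-and-limit argument (the map $\phi$ from the space of good lines to $\PP^k$, its behaviour on pencils, degeneration of lines $\ell_t$) needed to show that this point is in fact independent of the line. You bypass that consistency problem entirely by globalising first: saturating $G$ to a line subsheaf $\ccO(d)\subset T\PP^k$, pinning down $d=1$ on a general line, and then identifying the resulting section of $T\PP^k(-1)$ with a constant vector $a\in\CC^{k+1}$ via $H^0(T\PP^k(-1))\cong\CC^{k+1}$, so that the centre $y=[a]$ and the radial description of the leaves drop out at once. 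Your globalisation is shorter and more robust, replacing the paper's geometric limiting arguments with standard sheaf theory (coherent extension, saturation, reflexive rank $1$ $\Rightarrow$ invertible on a smooth variety) plus one cohomology computation; the paper's argument is more elementary in that it never leaves the realm of restrictions to lines and classical projective incidence geometry. The one step you rightly flag as needing care --- that a general line lies in $U(G)$ and in the subbundle locus of the saturation, so the hypothesis really forces $d=1$ --- is sound, since both bad loci have codimension at least $2$ (as the paper notes for $U(G)$ on a normal variety), and a general line avoids any codimension $\ge 2$ closed subset of $\PP^k$.
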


Another classical fact about rank $1$ distributions  is the following rectification property.
Briefly, it claims that it makes sense to consider ``the union of leaves of $G$ in $B$ through general points of a curve''.

\begin{lemma}\label{lem_union_of_generic_leaves}
   Subject to Notation~\ref{notation_distribution_on_singular}, 
      let $B$ be a projective variety, and $C \subset B$ is a closed curve intersecting the smooth locus of $B$.
   Consider a rank one distribution $G \subset TB$, which is generically transversal to $C$.
   Then there exists a Euclidean open subset $U \subset B$,
      which intersects $C$ in a dense Zariski open subset $C\cap U\subset C$,
      and a closed analytic submanifold $\Gamma\subset U$, 
     such that $\dim \Gamma =2$, $C \cap U \subset \Gamma$, and $G|_{\Gamma} \subset T \Gamma$.
\end{lemma}
This lemma is similar to the discussion in \cite[\S1.2]{bogomolov_mcquillan_rational_curves_on_foliatede_varieties}.
It also resembles 
\cite[Prop.~4.5]{brunella_uniformisation_of_foliations_by_curves}
or
\cite[\S4.1]{campana_paun_foliations_with_positive_slopes}.
However, in all these approaches one can extract appropriate surface near (in the Euclidean topology) each point of $C$.
The missing part we need is gluing these locally defined surfaces to one surface containing a Zariski dense subset of $C$.
Since we do not have compactness neither of smooth locus of $B$ or of regular locus of $G$ (even after restricting to $C$), 
   it is a little subtle to obtain finiteness of the local covers.
For instance, in \cite[\S4.1]{campana_paun_foliations_with_positive_slopes} 
  the authors construct an universal leaf $\Lambda$, which is an analytic submanifold of the product $B\times B$ containing an open dense subset of the diagonal and the surface $\Gamma$ we are looking for should be just 
  the image under second projection of the preimage of $C$ under the first projection $\Lambda \to B$. 
  However, since the relevant subsets are not compact, 
     we are not aware of any statement that would guarantee 
     that this image is an analytic subspace.

The argument is elementary and uses standard topological methods, but tedious, so we skip the details, leaving only the following sketch.

\begin{prf}[ of Lemma~\ref{lem_union_of_generic_leaves} (sketch)]
   Let $Y \subset B$ be a Zariski open dense subset contained in the smooth locus of $B$,
      where $G$ is defined and such that $C$ is smooth and transversal to $G$ at points of $C \cap Y$.
   The construction of $U$ is such that $C\cap U = C\cap Y$, and $U$ is a union of small neighbourhoods of points of $C\cap Y$. On each such small neighbourhood we can find the appropriate analytic surface using Rectification Theorem \cite[Thm~1.18]{ilyashenko_yakovenko_lectures_on_analytic_differential_equations}.
   Then use the \emph{paracompactness} of $Y$ and its subsets \cite{dieudonne_paracompact_spaces} to observe that it is enough 
   to consider only a locally finite family of such small open subsets.
   Conclude that the small analytic surfaces glue together to a well defined surface $\Gamma$ with the desired properties.
\end{prf}

\begin{defin}\label{def_union_of_generic_leaves}
   We say that $\Gamma$ as in Lemma~\ref{lem_union_of_generic_leaves} is 
   a \emph{surface obtained as the union of leaves of $G$ through general points of~$C$}. 
\end{defin}

\subsection{Manifolds with global corank 1 distributions}\label{sect_corank_1_distributions}

\begin{defin}\label{def_global_corank_1_distribution}
Suppose that $X$ is a complex manifold and $F \subset T X$ is a distribution,
  such that $U(F) = X$, that is, a distribution defined on the whole of $X$.
Suppose that $\rk F = \dim X -1$ and let $\theta: T X \to T X /F =:L$
  be the quotient map, so that the following is a short exact sequence of vector bundles on $X$:
\[
  0\to F \to T X \stackrel{\theta}{\to} L \to 0.
\]
In this situation we say that $(X,F)$ is a \emph{manifold with a global corank $1$ distribution}.
\end{defin}
We stress the word \emph{global}. In the definition above we assume $F \subset TX$ is a vector subbundle.

\begin{observation}\label{obs_corank_1_distribution}
   Suppose that $(X, F)$ is a manifold with a global corank $1$ distribution,
      and $Y \subset X$ is an analytic subset.
   Let $Y_0$ be the smooth locus of $Y$ and
      consider a distribution $G \subset T Y_0$,
      which is defined as $G:= T Y_0 \cap F$.
   Then either $Y$ is $F$-integral,
      or there exists an open dense subset $Y'\subset Y$,
      such that $(Y', G|_{Y'})$ is a manifold with a global corank $1$ distribution.
\end{observation}
The above observation captures the motivation for our treatment of manifolds with global corank $1$ distributions.
That is, even though our primary interest is in contact manifolds (see \S\ref{sect_prelim_contact_manifolds}),
  in our arguments we will prove claims about subvarieties of contact manifolds,
  and they have the property of being (generically)  manifolds with corank $1$ distributions.
As a side result, some of our intermediate results apply to a more general situation, than just contact manifolds.

\begin{prop}[{\cite[Prop C.1(i) and (iv)]{jabu_dr}}]\label{properties_of_distribution}
  Let $(X,F)$ be a manifold with a global corank $1$ distribution and $L$ as in Definition~\ref{def_global_corank_1_distribution}.
  \begin{enumerate}
    \item
      \label{item_dtheta_well_defined}
     the locally defined derivative $\ud \theta$ determines a well defined homomorphism of vector bundles:
      \[
        \ud \theta: \Wedge{2} F \to L.
      \]
      Specifically, $\theta \colon TX\to L$ locally is a $1$-form on $X$ (after choosing a local trivialisation of $L$). 
         The locally defined derivative of $\theta$ is a $2$-form, which depends on the choice of the  trivialisation of $L$. 
         Its restriction to $F$ does not depend on this choice, and thus it glues to a globally defined map $\Wedge{2} F \to L$.
    \item
      \label{item_Y_tangent_to_F}
      If $\Delta \subset X $ is $F$-integral, $\Delta_0$ is the smooth locus of $\Delta$,
         then $\ud \theta|_{\Delta_0} \equiv 0$.
      In particular:
      \[
        \dim \Delta \le \rk F - \half \min_{x \in \Delta} \left( \rk \ud\theta_x \right).
      \]
  \end{enumerate}
\end{prop}
\noprf

\begin{notation}\label{def_perp}
\renewcommand{\theenumi}{(\alph{enumi})}
Suppose that $(X,F)$ is a manifold with a global corank $1$ distribution, as in Definition~\ref{def_global_corank_1_distribution}. 
\begin{enumerate}
 \item \label{item_notation_dtheta}
    Item \ref{item_dtheta_well_defined} 
    of Proposition~\ref{properties_of_distribution} 
    implies that for every $x\in X$ there is a skew-symmetric bilinear form 
  $\ud \theta_x \colon F_x \times F_x \to L_x \simeq \CC$.
Its rank $\rk \ud\theta_x$ (that is, the rank of the induced linear map $F_x \to F_x^* \otimes L_x$) is an even integer due to skew-symmetry.
Moreover, the rank is semicontinuous as a function of $x$, 
  that is, the subsets $\set{x \in X | \rk \ud \theta_x \le 2k}$ are closed analytic subspaces for any integer $k$.
In particular, for every subvariety $Y \subset X$, there is an open dense subset of $Y$, where $\rk \ud \theta_x$ is constant.
\item \label{item_notation_perp} 
   Suppose that $Y \subset X$ is a subvariety.
   Consider the vector bundle $TX|_Y$ on $Y$ and another distribution $G$ in this vector bundle $TX|_{Y}$
   (we stress, that $G$ is not assumed to be a distribution in $TX$, that is, it needs not to be defined over a general point of $X$;
    it is only defined on an open subset of $Y$).
In this situation by $G^{\perp_F}$ we denote the distribution in $TX|_{Y}$:
\begin{equation}\label{equ_define_perp}
   G^{\perp_F} := (G\cap F|_{Y})^{\perp_{\ud \theta}}\subset F|_{Y}.
\end{equation}
Here $\perp_{\ud \theta}$ denotes the perpendicular subspace with respect to the skew form defined in \ref{item_notation_dtheta}.
This distribution is defined on an open dense subset of $Y$ 
  where the rank of   $G\cap F|_{Y}$ is constant and where
  $\rk \ud \theta$ is constant.
\item 
  For any $x\in X$, the \emph{degeneracy locus} of $F$ at $x$ 
    is $(F_x)^{\perp_{\ud \theta}} \subset F_x$.
  The \emph{degeneracy subdistriution} of $F$ is the distribtion in 
    $TX$ determined by the degeneracy loci at general points of $X$.
  Equivalently, the degeneracy subdistribution of $F$ is $F^{\perp_F}$.
  For consistency, we say that the degeneracy subdistribution of $TX$ is $TX$ (this would be a degenate case, when $\theta=0$,
    and arises from restricting $F$ to smaller subvarieties which are integral).
  \end{enumerate}
\end{notation}
\renewcommand{\theenumi}{(\roman{enumi})}

\begin{defin}
In the situation of Definition~\ref{def_global_corank_1_distribution}, 
consider the open dense subset $X_0 \subset X$, where $\rk \ud\theta_x$ is constant and equal to $2r$ for $x \in X_0$.
Suppose that $\Delta \subset X_0$  is an analytic submanifold.
We say $\Delta$ is \emph{maximally $F$-integral} if $\Delta$ is $F$-integral and $\dim \Delta = \dim X - r -1$.
\end{defin}

This is an analogue of a Legendrian subvariety in contact manifold, but here $F$ need not to be a contact structure on $X$
  (that is, $\ud \theta_x$ is not necessarily non-degenerate).
  
\begin{lemma}\label{lem_maximally_integral}
   Suppose that $(X,F)$ is a complex manifold with a global corank $1$ distribution, $L$, $\theta$ 
   as in Definition~\ref{def_global_corank_1_distribution},
     and $X_0\subset X$ is the open dense subset where $\rk\ud\theta_x$ is constant.
   Pick $x \in X_0$ and let $v \in T_x X$.
   Then $v$ is in the degeneracy locus of $F$,
      if and only if $v \in T_x \Delta$ for every maximally $F$-integral analytic submanifold $\Delta \subset X_0$ containing $x$.
\end{lemma}

\begin{proof}
   We have $(F_x)^{\perp_{\ud \theta}} = \set{v\in F_x\colon \forall w \in F_x, \ud \theta_x (v,w) =0}$.
   Pick a maximally $F$-integrable analytic submanifold $\Delta\subset X_0$ with $x \in \Delta$.
   We first prove that $(F_x)^{\perp_{\ud \theta}} \subset T_x \Delta$.
   Suppose otherwise that $v\notin T_{x}\Delta$ and $v\in (F_x)^{\perp_{\ud \theta}}$.
   Let $W = T_x \Delta + v$ be the vector space of dimension $(\dim \Delta +1) = \dim X - r$.
   It is an isotropic subspace, because $v$ is perpendicular to all vectors in $F_x$, 
     and $T_x \Delta$ is perpendicular to itself.
   But the maximal possible dimension for an isotropic subspace is $\dim F_x -r$, a contradiction.

   We now prove the other implication.
   The problem is analytically local around $x$, so we can assume $X = X_0$ is an analytically open subset of $\CC^{\dim X}$,
   $x=0$, $L\simeq \ccO_X$ is a trivial line bundle, and $\theta\in H^0(T^*X \otimes L) \simeq H^0(T^*X)$
      is in the Darboux normal form $\theta = \ud x_{0} - \sum_{i=1}^{r} x_i \ud x_{r+i}$.
   Clearly $F_x = \set{\ud x_{0} = 0}$ and $(F_x)^{\perp_{\ud \theta}} = \set{\ud x_{0} = \dotsb = \ud x_{2r}=0}$.
   Let $\Delta_1 =  \set{x_{0}= x_1 = \dotsb x_r =0}$ and $\Delta_2 =  \set{x_{0}= x_{r+1} = \dotsb x_{2r} =0}$.
   These are maximally $F$-integral submanifolds of $X_0$ containing $x$, so
   \begin{multline*}
     v \in T_0 \Delta_1 \cap T_0 \Delta_2 =\\
        \set{\ud x_{0} = \ud x_{1} = \dotsb = \ud x_{r}=0} \cap
        \set{\ud x_{0} = \ud x_{r+1} = \dotsb = \ud x_{2r}=0} = \\ (F_x)^{\perp_{\ud \theta}}.
   \end{multline*}
\end{proof}
All the manifolds with global corank $1$ distributions come with
  a natural polarisation in the sense of the opening paragraph of Section~\ref{sect_preliminaries}.
  Namely, $(X,L = TX/F)$ is a polarised manifold.
  Here $L$ is a line bundle, since it is a quotient of the vector bundle of rank $\dim X$ by its subbundle of rank $\dim X -1$.
\begin{lemma}\label{lem_lines_are_integral}
   Let $X$ be a manifold with a global corank $1$ distribution $F$ and $L, \theta$ as in Definition~\ref{def_global_corank_1_distribution}. 
   If $\Gamma \subset X$ is a linear subspace of $(X,L)$, then $\Gamma$ is $F$-integral.
   In particular, lines are always $F$-integral.
\end{lemma}
\noprf

\section{Contact manifolds}\label{sect_prelim_contact_manifolds}

Let $(X,F)$ be a manifold with a global corank $1$ distribution, with the short exact sequence
   $0\to F \to T X \stackrel{\theta}{\to} L \to 0$.
In particular, $L$ is a line bundle that is going to be used to measure the degrees.
Recall the map $\ud \theta \colon \Wedge{2}F \to L$ from Proposition~\ref{properties_of_distribution}.
As defined in Section~\ref{sect_introduction}, if $\ud \theta $ is nowhere degenerate then 
   $(X, F)$ is a contact manifold.
In particular, $\ud \theta$ makes $F_x$ into a symplectic vector space for each $x\in X$ and $\dim X$ is odd.

If $X$ is a contact manifold of dimension $2n+1$,
   then $-K_X$ is a divisor linearly equivalent to the Cartier divisor of the line bundle $L^{\otimes (n+1)}$.
Our main interest is when $X$ is projective, in fact Fano, which is therefore equivalent to $L$ being ample.
However, some of the statements are true in a more general setting.

Maximally integral submanifolds (or subvarieties) of contact manifolds are called \emph{Legendrian}.
That is:

\begin{defin}\label{def_Legendrian}
  A subvariety, or an analytic subspace (or a reduced subscheme) $Y \subset X$ is \emph{Legendrian}, if it is of (pure) dimension $n$ and
     $T Y \subset F|_{Y}$ (as distributions in $T X|_{Y}$).
\end{defin}

\subsection{Splitting types on special lines}\label{sect_splitting_types}

In this subsection we suppose that $X$ is a complex contact manifold,
   with a contact distribution $F \subset TX$ and the quotient line bundle $L  = TX/F$.
In particular, $X$ does not need to be projective, or compact.
   
Suppose that $f \colon \PP^1 \to X$ is a holomorphic map such that $f^*L \simeq \ccO(1)$, 
   that is $f$ is a parametrisation of a line.
We consider $f^* TX$. By \cite[Prop.~2.8]{4authors} the splitting type of this vector bundle is
\[
   f^*TX 
   = \ccO(\fromto{a_1}{a_n},\fromto{-b_1}{-b_{n+1}})
\]
  with $a_i >0$ and $b_j\ge 0$.
We also have $c_1(f^*TX) = n+1$, so $\sum a_i - \sum b_j = n+1$.
Since the differential gives a non-zero morphism $T\PP^1\simeq \ccO(2) \to f^*TX$,
  we must have at least one $a_i\ge 2$.
In particular, in the case of a standard line, as defined in \eqref{equ_standard_splitting_type},
  we have  $a_1=2$, $a_2=\dotsb=a_n=1$, $b_1 = \dotsb = b_{n+1} =0$.

Further consider $f^* F$. Since $F^* \simeq F \otimes L$, we also have:
\[
  f^*F \simeq
\ccO(\fromto{c_1}{c_n},\fromto{1-c_1}{1-c_n})
\]
for some integers $c_i > 0$. In particular, there are exactly $n$ strictly positive entries in this splitting.

\begin{lemma}\label{lem_positive_parts_of_splitting_are_in_F}
  Suppose that $(X,F)$ is a complex contact manifold with line bundle $L=TX/F$ and $f\colon \PP^1 \to X$ is a parametrisation of a line on $(X,L)$. 
  Then the short exact sequence
  \[
    0 \to f^*F \to f^* TX \stackrel{f^* \theta}{\to} \ccO(1) \to 0
  \]
  does not split, and $c_i =a_i$.
\end{lemma}
\begin{prf}
  Suppose on the contrary, that the exact sequence splits. Then one of the integers $a_i$ is equal to $1$, say $a_n=1$.
  Further 
  \[
    f^*F = \ccO(\fromto{a_1}{a_{n-1}}, \fromto{-b_1}{-b_{n+1}}),
  \]
    a contradiction, since there are only $n-1$ strictly positive entries in this splitting.

  Consider the restriction $f^* \theta \colon\ccO(\fromto{a_1}{a_n}) \to \ccO(1)$.
  Since $a_i \ge 1$ and the sequence does not split, this restriction must be identically zero.
  So the positive part comes from $f^*F$ and $c_i = a_i$.
\end{prf}

Now suppose that there is only one $b= b_n \ge 0$ and the remaining $b_1=\dotsb=b_{n-1}=0$.
Equivalently, there is at most one negative term in the splitting of $f^*TX$.
Then one of the integers $a_i$, say $a_n$, must satisfy  $1-a_n \le -b$.
So suppose that $a_n = b+c$ for some $c > 0$.
We must have   $\left(\sum_{i=1}^{n-1} a_i\right) + c = n+1$, and we conclude:
\begin{lemma}\label{lem_splitting_types}
   If there is at most only one negative term in the splitting of $f^*TX$, then either
   \begin{align*}
      f^*TX & =  \ccO(-b, 0^{n}, 1^{n-2}, 2, b+1) \text{ and}\\
       f^*F & =  \ccO(-b, -1, 0^{n-2}, 1^{n-2}, 2, b+1),\\ \text{ or}\\
      f^*TX & =  \ccO(-b, 0^{n}, 1^{n-1}, b+2) \text{ and}\\
       f^*F & =  \ccO(-b-1, 0^{n-1}, 1^{n-1}, b+2)
   \end{align*}
   for some $b\ge 0$.
   Note that in the second case the image of $f$ must have cuspidal singularities (unless $b=0$),
      that is, the curve is not an immersed curve.
\end{lemma}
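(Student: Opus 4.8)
The plan is to reduce everything to a bundle computation on $\PP^1$, first stripping off the positive part of the splitting using the preceding lemma. That lemma gives $c_i = a_i$ and that $0 \to f^*F \to f^*TX \stackrel{f^*\theta}{\to} \ccO(1) \to 0$ is non-split; its proof also shows $f^*\theta$ vanishes on the positive summands $P := \ccO(a_1,\dotsc,a_n)$. Since every $a_i \ge 1$, no summand $\ccO(a_i)$ admits a nonzero map into a summand $\ccO(1-a_j)$ of $f^*F$, so $P$ sits as a common direct summand: $f^*TX = P \oplus N$ and $f^*F = P \oplus S$ with $S = \ccO(1-a_1,\dotsc,1-a_n)$, and quotienting the sequence by $P$ yields
\[
  0 \to S \to N \stackrel{\psi}{\to} \ccO(1) \to 0.
\]
The hypothesis that $f^*TX$ has at most one negative term says precisely that $N = \ccO^{\oplus n}\oplus \ccO(-b)$ for some $b \ge 0$, so the whole statement becomes the classification of kernels of surjections $\psi\colon \ccO^{\oplus n}\oplus\ccO(-b)\to\ccO(1)$.

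Next I would write $\psi = (\psi_1,\dotsc,\psi_n,\psi_0)$ with $\psi_i \in H^0(\ccO(1))$ for $i\le n$ and $\psi_0 \in H^0(\ccO(b+1))$, and set $w := \dim\langle \psi_1,\dotsc,\psi_n\rangle \in \{0,1,2\}$. Since $\ccO(b+1)$ has no nowhere-vanishing section when $b\ge 0$, surjectivity of $\psi$ rules out $w=0$. The two surviving values of $w$ produce exactly the two cases. After an automorphism of $\ccO^{\oplus n}$ (an element of $GL_n(\CC)$) I may assume $\psi_2=\dotsb=\psi_n=0$ with $\psi_1\neq 0$ when $w=1$, and $\psi_3=\dotsb=\psi_n=0$ with $(\psi_1,\psi_2)$ a basis of $H^0(\ccO(1))$ when $w=2$; the $n-1$, resp.\ $n-2$, trivial summands then split off directly into $S$.

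The heart is the computation of the reduced kernel. When $w=2$, the map $(\psi_1,\psi_2)\colon \ccO^{\oplus 2}\to\ccO(1)$ is already surjective with kernel $\ccO(-1)$; projecting $K:=\ker(\ccO^{\oplus2}\oplus\ccO(-b)\to\ccO(1))$ to the $\ccO(-b)$ factor gives a short exact sequence $0\to\ccO(-1)\to K\to\ccO(-b)\to 0$, which splits because $\operatorname{Ext}^1(\ccO(-b),\ccO(-1)) = H^1(\ccO(b-1))=0$ for $b\ge0$. Hence $K\simeq\ccO(-1)\oplus\ccO(-b)$ and $S\simeq\ccO^{\oplus(n-2)}\oplus\ccO(-1)\oplus\ccO(-b)$, i.e.\ $\{a_i\}=\{1^{n-2},2,b+1\}$, the first case. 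When $w=1$ the reduced kernel is a single line bundle of degree $-b-1$, namely $\ccO(-b-1)$, so $S\simeq\ccO^{\oplus(n-1)}\oplus\ccO(-b-1)$ and $\{a_i\}=\{1^{n-1},b+2\}$, the second case. In each case $f^*TX = P\oplus N$ and $f^*F = P\oplus S$ give the two displayed splittings. For the cuspidal remark I would examine $df\colon\ccO(2)=T\PP^1\to f^*TX$: as a line is $F$-integral (Lemma~\ref{lem_lines_are_integral}) one has $f^*\theta\circ df=0$, so $df$ lands in $f^*F$, and for degree reasons its components into $N$ vanish, so $df$ lands in $P$. In the second case the only possibly nonzero component is a section of $\ccO(b)$, which must vanish somewhere once $b\ge1$, so $df$ has a zero and $f$ is not an immersion; in the first case the component $\ccO(2)\to\ccO(2)$ can be a nonzero constant, allowing an immersion.

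The step I expect to be most delicate is this exact determination of $S=\ker\psi$. A bare dimension or Euler-characteristic count only fixes the number of $a_i$ exceeding $1$ (equivalently, the rank of $H^0(N)\to H^0(\ccO(1))$), but not how the total excess $\sum(a_i-1)=b+1$ is partitioned; intermediate partitions such as $\{b-1,2,0^{n-2}\}$ are invisible to degree bookkeeping alone. Excluding them is precisely what the splitting-via-$\operatorname{Ext}$ argument accomplishes, and it genuinely uses that the quotient is the line bundle $\ccO(1)$, whose global sections span a space of dimension at most $2$.
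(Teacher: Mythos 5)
Your proof is correct, and at the decisive step it takes a genuinely different (and more complete) route than the paper. The paper's own argument is a degree count: it asserts that one of the $a_i$ must satisfy $1-a_i\le -b$, writes $a_n=b+c$ with $c>0$, and then the relation $\bigl(\sum_{i=1}^{n-1}a_i\bigr)+c=n+1$ with all $a_i\ge 1$, $c\ge 1$ leaves exactly the two stated distributions of the excess, after which $f^*F$ is read off from $c_i=a_i$. The inequality $1-a_n\le -b$ is stated there without justification, and --- exactly as you observe in your closing paragraph --- it does not follow from degree bookkeeping alone: multisets such as $\{a_i\}=\{1^{n-2},3,b\}$ with $b\ge 3$ satisfy $\sum(a_i-1)=b+1$ yet have every $a_i\le b$. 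Your classification of surjections $\psi\colon\ccO^{\oplus n}\oplus\ccO(-b)\to\ccO(1)$ by the rank $w$ of $\langle\psi_1,\dots,\psi_n\rangle$, with the kernel computed via $\operatorname{Ext}^1(\ccO(-b),\ccO(-1))=0$, supplies precisely this missing step, so your proof simultaneously recovers the paper's conclusion and repairs the gap in its proof. (For the record, the paper's inequality also admits a short direct proof in your framework: the projection $S=\ker(\psi)\to\ccO(-b)$ must be nonzero, since otherwise $S$ would be a rank-$n$ subbundle of $\ccO^{\oplus n}$ of negative degree; a nonzero map $\ccO(1-a_i)\to\ccO(-b)$ then forces $1-a_i\le -b$.) You also prove the cuspidal remark via $df$ landing in $P$, which the paper merely states.

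One caveat: your computation does not literally ``give the two displayed splittings.'' In the second case you obtain $f^*F=\ccO(-b-1,0^{n-1},1^{n-1},b+2)$, whereas the lemma displays $\ccO(-b-1,0^{n-1},1^{n-1},b+1)$. Your value is the correct one: the displayed bundle has degree $n-1$, while $\deg f^*F=\deg f^*TX-1=n$, and only $b+2$ is compatible with $f^*F\simeq\ccO(c_1,\dots,c_n,1-c_1,\dots,1-c_n)$, $c_i=a_i$, when $\{a_i\}=\{1^{n-1},b+2\}$. So the statement as printed contains a typo that your argument detects; you should flag the discrepancy explicitly rather than assert agreement with the display.
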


The lemma applies to a situation where there is a set of lines filling in a divisor.
\begin{lemma}\label{lem_number_of_non_negative_terms}
   Assume $(X,L)$ is a polarised complex manifold and $X$ is either compact of Fujiki class $\ccC$ or quasi-projective.
   Suppose that $\ccB$ is an irreducible (analytic or algebraic) variety parametrising lines on $X$,
      and let $B \subset X$ be the locus swept by those lines.
   Assume $c\in \ccB$ is a general line from $\ccB$, and let $C \subset B$ be the corresponding curve,
      with a birational parametrisation $f\colon \PP^1 \to C$.
   Then $f^*TX$ has at most $\codim (B\subset X)$ negative terms in its splitting,
      and the distribution $f^*T B \subset f^*T X$ is contained in $(f^*T X)^{\ge 0}$.
\end{lemma}

\begin{prf}
   Since $C$ is a rational curve, we can replace $\ccB$ with $\Hom_{\ccB}$, a subvariety of $\Hom(\PP^1 \to X)$,
     consisting of the morphisms that are birational onto their images, and whose images are the curves in $\ccB$.
   The locus swept by $\Hom_{\ccB}$, that is, the union of images of all morphisms from $\Hom_{\ccB}$, is equal to $B$.
   In this situation $f \in \Hom_{\ccB}$ is a general point, in particular, it is a smooth point of $\Hom_{\ccB}$,
      even though $f$ might be a singular or non-reduced point of $\Hom(\PP^1 \to X)$.
   Similarly, if $p \in \PP^1$ is a general point, then $f(p)$ is a general point in $B$.

   Thus all the tangent directions in $T_f \Hom_{\ccB} \subset T_f \Hom(\PP^1 \to X)$  
      can be realised as curves in $\Hom_{\ccB}$, that is, as deformations of $C$, which (in particular) are contained in $B$.
   We have $T_f \Hom(\PP^1 \to X) = H^0(f^*TX)$ 
      and the differential of the evaluation map $\Hom(\PP^1 \to X) \times \PP^1 \to X$ 
      at $(f,p)$ is the evaluation of sections $H^0(f^*TX) \to f^*(TX)_{p} = T_{f(p)} X$ 
      \cite[Prop.~II.3.4]{kollar_book_rational_curves}.
   The deformations obtained from $\Hom_{\ccB}$ sweep out $B$,
      so the image of the evaluation contains $T_{f(p)} B$,
      which is only possible if the number of non-negative terms in the splitting of $f^*TX$ is at least
      $T_{f(p)} B = \dim B$.
   Equivalently, the number of negative terms is at most $\codim (B\subset X)$.
\end{prf}

\begin{cor}\label{cor_non_standard_lines_do_not_cover_X}
   Assume $(X,F)$ is a complex contact manifold and $X$ is either compact of Fujiki class $\ccC$ or quasi-projective.
   Suppose that $\ccB$ is an irreducible analytic or algebraic subvariety of the Barlet space or of the Chow variety,
      whose general member is a non-standard line.
   Then:
   \begin{enumerate}
    \item    the locus $B$ swept by $\ccB$ does not cover $X$, and
    \item    if, in addition, $\codim (B\subset X) =1$,
               then the splittings of $f^*TX$ and $f^*F$ 
               are as in Lemma~\ref{lem_splitting_types}, with $b<0$,
               where $f$ is the normalisation of the general line from $\ccB$.
   \end{enumerate}
\end{cor}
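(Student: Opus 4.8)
The plan is to deduce the corollary directly from the two preceding lemmas, the only extra ingredient being the elementary remark that, by Definition~\ref{def_standard_line}, a non-standard line has at least one strictly negative term $\ccO(-b_j)$, $b_j>0$, in the splitting of $f^*TX$. First I would fix a general $c\in\ccB$, so that its normalisation $f\colon\PP^1\to X$ is a non-standard line, and record that $f^*TX$ has at least one negative term in its splitting.

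For part~(1) I would argue by contradiction. Lemma~\ref{lem_number_of_non_negative_terms} bounds the number of negative terms of $f^*TX$ by $\codim(B\subset X)$. If $B=X$ this codimension is $0$, forcing all $b_j=0$ and hence $f$ standard, contradicting the choice of $c$. Therefore $B\ne X$, i.e.\ the locus does not cover $X$.

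For part~(2) I would assume $\codim(B\subset X)=1$. Then Lemma~\ref{lem_number_of_non_negative_terms} gives at most one negative term, while non-standardness gives at least one; so $f^*TX$ has exactly one negative term. This is precisely the hypothesis of Lemma~\ref{lem_splitting_types}, which I would invoke to conclude that $f^*TX$ and $f^*F$ take one of the two displayed forms. Since $f$ is non-standard, the single negative entry cannot vanish, so the parameter satisfies $b\ge 1$ (equivalently the distinguished entry $-b$ is strictly negative), which is the asserted condition.

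The argument is short precisely because the substance lives in the earlier results: the deformation-theoretic evaluation-map estimate of Lemma~\ref{lem_number_of_non_negative_terms} and the splitting classification of Lemma~\ref{lem_splitting_types}. The only delicate point, and the closest thing to an obstacle, is the bookkeeping that matches the defining property of \emph{non-standard} against the codimension bound, in order to pin down that there is exactly one negative term and that it is genuinely negative in the codimension-one case.
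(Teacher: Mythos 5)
Your proposal is correct and coincides with the paper's own (implicit) argument: the corollary is stated without a proof precisely because it is the immediate combination of Lemma~\ref{lem_number_of_non_negative_terms} (the general line of $\ccB$ has at most $\codim(B\subset X)$ negative summands in $f^*TX$) with the definition of non-standard (at least one strictly negative summand) and then Lemma~\ref{lem_splitting_types} in the codimension-one case. You also correctly read the condition ``$b<0$'' in the statement as the intended assertion that the distinguished entry $-b$ is strictly negative, i.e.\ $b\ge 1$.
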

\noprf

For future reference we note the following lemma:
\begin{lemma}\label{lem_perp_of_TB}
   Assume as above that $(X,F)$ is a complex contact manifold and $X$ is compact of Fujiki class $\ccC$ or quasi-projective.
   Suppose that $\ccB$ is an irreducible analytic or algebraic subvariety of the Barlet space or of the Chow variety,
      whose general member is a non-standard line and $\codim (B\subset X) =1$, where $B \subset X$ is the locus swept by $\ccB$.
   Let $G \subset TX|_{B}$ be the rank $1$ distribution defined as 
      \[
         G = TB^{\perp_F} = (TB \cap F|_{Y})^{\perp_{\ud \theta}}.
      \]
   Consider a general line in $\ccB$ and its parametrisation $f \colon \PP^1\to X$.
   Then $f^*G \subset f^*TX$ is equal to $\ccO(b+1) \subset \ccO(-b, 0^{n}, 1^{n-2}, 2, b+1)$ 
       or $\ccO(b+2) \subset \ccO(-b, 0^{n}, 1^{n-1}, b+2)$.
   In the first case, if in addition $b=1$, we may choose the splitting of $\ccO(2, 2)= \ccO(2,b+1) = \ccO(2)\oplus \ccO(b+1)$ 
       in such a way that $G$ is the second summand.
   In the latter case, $f^*G = T\PP^1$ as distributions in $f^*TX$, that is, the general line in $\ccB$ is tangent to $G$.
\end{lemma}
\begin{proof}
   Generically, we have $f^*TB = (f^*TX)^{\ge 0}$ and 
   \begin{align*}
       f^*TB \cap f^*F = (f^*TX)^{\ge 0} \cap f^*F & =  \ccO(-1, 0^{n-2}, 1^{n-2}, 2, b+1) \text{ or }\\
                                                   & =  \ccO(-1, 0^{n-2}, 1^{n-1}, b+2).
   \end{align*}
   (see Lemma~\ref{lem_splitting_types} and Corollary~\ref{cor_non_standard_lines_do_not_cover_X}).
   Thus $f^*TB \cap f^*F$ extends to a vector subbundle of $f^*TX$, and the degree of its perpendicular line bundle
   \[
      G = (f^*TB \cap f^*F)^{\perp_{\ud \theta}} \simeq \left(f^*F/ (f^*TB \cap f^*F)\right)^*\otimes f^*L 
   \]
   is $b+1$ in the first case or $b+2$ in the second case.
   The remaining statements are straightforward.
\end{proof}

\subsection{Parameter spaces for lines on contact Fano manifolds}
\label{sect_parameter_spaces_for_lines_on_contact}

Let $(X,F)$ be a contact Fano manifold of dimension $2n+1$.
A lot of attention aims to understand lines on $X$.
Let us underline, that we mean lines with respect to the polarisation $L=TX/F$, as in \S\ref{sect_lines_and_linear_subspaces}.
In particular, if $X = \PP^{2n+1}$, then $L \simeq \ccO_{\PP^{2n+1}}(2)$ and there are no lines on $X$.
In all the other projective cases, the lines exist and cover $X$:
  for $X = \PP(T^* M)$, the ordinary lines in the fibres are lines with respect to $L$;
  for $X$ Fano with $\Pic X = \ZZ[L]$, this is observed (for example) in \cite[\S2.3]{kebekus_lines1}.

By Proposition~\ref{prop_linear_spaces_and_Chow_variety_for_projective}\ref{item_linear_spaces_are_closed} the set of lines on $(X,L)$ is Zariski closed in the Chow variety.  
Let $\ccH$ be  an irreducible component of this set.
Thus each point in $c \in \ccH$ represents an irreducible rational curve $C\subset X$
   with normalisation $f\colon \PP^1 \to C$, and  $f^*L = \ccO_{\PP^1}(1)$.

If $Z$ is a scheme we denote by $\reduced{Z}$ the reduced subscheme of $Z$.   
As in \S\ref{sect_lines_and_linear_subspaces}, from the definition of the Chow variety, $\ccH$ comes with the following diagram:
\[
  \xymatrix{ & \ccU_{\ccH} \ar[dr]^{\phi} \ar[dl]_{\pi}&\\ \ccH&& X}
\]
where $\ccU_{\ccH}$ is the \emph{universal family},
  that is the subvariety of $\ccH \times X$, such that the projection $\pi: \ccU_{\ccH} \to \ccH$
  is equidimensional and the set-theoretic fibre $\reduced{\inv{\pi}(c)}$ is $\set{c} \times C$, $C$ is the curve corresponding to $c$.
The map $\phi: \ccU_{\ccH} \to X$ is a projection on the second factor.

For $x \in X$, we let $\ccH_x$ be the \emph{scheme of lines through $x$} defined as $\reduced{\pi(\inv{\phi}(x))}$.
Here, although $\ccH_x$ has a reduced scheme structure, we avoid the word ``variety'', since we cannot claim that $\ccH_x$ is always irreducible.
We also let $H_x \subset X$ be the \emph{union of lines through $x$}, that is $\reduced{\phi(\inv{\pi}(\ccH_x))}$.

\begin{rem}
   Kebekus in his presentation of \cite{kebekus_lines1}, \cite{kebekus_lines2}
     assumes in addition that $\ccH$ \emph{dominates} $X$,
     that is, there exits a line from $\ccH$ that passes through a general point of $X$.
   However, this assumption is redundant, and below we briefly explain why.
\end{rem}

\begin{prop}[{\cite[Prop.~4.1]{kebekus_lines1}}]\label{prop_Hx_Legendrian}
  Suppose that $X$ is a contact Fano manifold of dimension $2n+1$, not isomorphic to $\PP^{2n+1}$.
  Let $x \in X$ be any point, 
     and $\ccH$ an irreducible component of the subset of Chow variety parametrising lines.
      Suppose that $\ccH_x$ is the scheme of lines through $x$ and $H_x$ is the union of lines though $x$ as above.
  Then $H_x \subset X$ is  Legendrian, in particular, it is of pure dimension $n$.
  Furthermore,
   $\ccH_x$ is of pure dimension $n-1$.
\end{prop}
\begin{prf}
   By the standard dimension estimates, coming from the deformation theory and Riemann-Roch for $\PP^1$,
     we have $\dim \ccH_x \ge n-1$ (compare to \cite[Equation~(2.1)]{kebekus_lines1}).
   Moreover,  $\dim H_x = \dim \ccH_x +1$ by Mori's bend and break (Lemma~\ref{lem_bend_and_break_1_point}).
   Thus it remains to prove that $H_x$ is $F$-integral.

  Let $y\in H_x$ be a general (smooth) point of an irreducible component of $\ccH_x$,
     let $c \in \ccH_x$ represent a curve $C\subset X$
     that contains $x$ and $y$ and is smooth at $y$. 
  Let $f:\PP^1\to C$ be the normalisation map with $p\in \PP^1$ mapped to $x$.
  By \cite[Prop.~II.3.4]{kollar_book_rational_curves} 
  \[
    T_y H_x \subset T_y C + H^0(\PP^1, f^*(T_X) \otimes \ccO_{\PP^1}(-p))|_y.
  \]
  By Lemma~\ref{lem_lines_are_integral} it is enough to show that the second summand is contained in $F_y$.
  This follows from Lemma~\ref{lem_positive_parts_of_splitting_are_in_F}, 
    since all the sections of $f^*(T_X) \otimes \ccO_{\PP^1}(-1)$
    must come from sections of $f^*(F) \otimes \ccO_{\PP^1}(-1)$.
 \end{prf}

\begin{cor}\label{cor_dim_ccH_eq_3n-1}
  With $X$ and $\ccH$ as in Setting~\ref{sett_contact_Fano} we have
     $\dim \ccH = 3n-1$, and $\ccH$ dominates $X$.
\end{cor}

\begin{prf}
   As in \cite[(2.1)]{kebekus_lines1}, $\dim \ccH \ge 3n-1$.
   On the other hand
   \[
       \dim \ccH = \dim \ccH_x + \dim H -1,
   \]
     where $H \subset X$ is the locus swept by $\ccH$ (that is, the union of all the lines from $\ccH$),
     and $x\in H$ is a general point.
   Since $\dim \ccH_x = n-1$ and $\dim H \le \dim X = 2n+1$,
     we must have $\dim \ccH = 3n-1$ and $H = X$.
\end{prf}

Similarly, if $\ccB, \ccS \subset \ccH$ are some subfamilies of lines,
  then we define $\ccU_{\ccB}, \ccU_{\ccS} \subset \ccU_{\ccH}$,
  $\ccB_x, \ccS_x\subset \ccH_x$, $B_x, S_x\subset H_x$ in an analogous way.
Typically, $\ccB$ will be the family of non-standard lines,
  and $\ccS$ will be the family of singular lines, or rather they will be some irreducible components of these families.

\section{Singular lines}\label{sect_sing_lines}

Throughout this section we will assume the following setting:
\begin{setting}\label{sett_for_singular_lines}
Suppose that $X$ is a complex manifold, and $L$ is a line bundle on $X$.
In addition, we impose our favourite assumptions on $X$: either $X$ is compact of Fujiki class $\ccC$ or $X$ is quasi-projective. 
Consider the Barlet space of $X$ or, respectively, the Chow variety of $X$.
Inside the cycle space consider an irreducible component $\ccS$ of the set of singular lines on $(X,L)$
  (see~\S\ref{sect_barlet_and_chow}).
Let $\ccU_{\ccS} \subset \ccS \times X$ be the universal family for $\ccS$ and let $S \subset X$
  be the image of the projection $\ccU_{\ccS} \to X$.
That is, $S$ is the locus swept by lines from $\ccS$.
\end{setting}

Note that we do not claim that $\ccS$ or $S$ are closed in the cycle space or in $X$.
   As explained in \S\ref{sect_barlet_and_chow}, $\ccS$ is locally closed with closure $\overline{\ccS}$, which is an analytic variety, and such that the boundary $\overline{\ccS} \setminus \ccS$ 
   is also locally closed. 
Moreover, $S$ is constructible by Chevalley-Remmert Theorem \cite[Thm~on~p291 and Rem.~on~p293]{lojasiewicz_intro_to_complex_analytic_geometry}.

\subsection{Kebekus results on singular lines in the projective case}\label{sect_kebekus_singular_line}

Deformations of singular rational curves are studied by Kebekus in \cite{kebekus_families_of_singular_rational_curves}
  (in general setting) and in \cite[\S3]{kebekus_lines1} (in the setting of contact Fano manifolds).
The summary of these results (restricted to the setting of lines) is presented in the following proposition.

\begin{prop}\label{prop_Kebekus_singular_lines}
   Let $(Y,L_{Y})$ be a polarised projective variety (not necessarily smooth, or normal).
   Suppose that $L_{Y}$ is ample and $y\in Y$ is a general point.
   Then the set of singular lines through $y$ is at most finite, and all these lines are smooth at $y$.
   If $Y$ is in addition a contact Fano manifold with the contact distribution $F\subset TY$
     such that $L_Y\simeq TY/F$,
     then all the lines through $y$ are smooth.
\end{prop}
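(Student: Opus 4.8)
The plan is to treat the two assertions separately, since the first is a general statement about singular lines on any polarised projective variety, while the second specialises to the contact Fano case and rules out singularities altogether.

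For the first assertion, I would combine the two cited results of Kebekus. The finiteness of singular lines through a general point $x$ should follow from the dimension count built into \cite[Thm~3.3(2)]{kebekus_families_of_singular_rational_curves} (the bound invoked in Theorem~\ref{thm_dimension_of_the_space_of_singular_lines}): the locus swept out by singular lines has codimension at least $2$, so a general point lies on no singular line at all, and in particular only finitely many (in fact zero) pass through it. The assertion that the singular lines present at a general point must be \emph{smooth at} $x$ is the content of \cite[Prop.~3.3]{kebekus_lines1}: a line singular at the very general point through which it passes would force its deformations to behave too badly, contradicting the freeness of curves through a general point. Since $L$ is ample, every line has $L$-degree exactly $1$, which is what keeps the family of lines proper and lets the bend-and-break mechanism of Lemma~\ref{lem_bend_and_break} apply; this is the essential input that a general point sees only well-behaved lines.

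For the contact Fano refinement, the point is that we may now remove the singular lines entirely. Here $\ccH_x$ is of pure dimension $n-1$ by Proposition~\ref{prop_Hx_Legendrian}, and $\reduced{(H_x)}$ is a Legendrian subvariety of pure dimension $n$. First, by Corollary~\ref{cor_dim_ccH_eq_3n-1}, lines cover $X$ and form a family of dimension $3n-1$, so a general $x$ lies on a positive-dimensional family of lines, all of which are smooth at $x$ by the first assertion. The key additional feature of the contact setting is the symplectic structure $\ud\theta$ on $F$ and the fact (Lemma~\ref{lem_lines_are_integral}) that every line is $F$-integral. I would use this to upgrade ``smooth at $x$'' to ``smooth'': combining Theorem~\ref{thm_dimension_of_the_space_of_singular_lines}, which bounds the singular-line locus by $2n-1 = \dim X - 2$, with the observation that the singular lines form a closed subset not dominating $X$, we conclude that a general point $x$ avoids the locus of singular lines, so every line through $x$ is smooth.

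The main obstacle, as I see it, is making the passage from ``no singular line passes through the general point'' to ``every line through the general point is globally smooth'' fully rigorous, rather than merely smooth at $x$. The subtlety is that a line smooth at $x$ could a priori acquire a singularity elsewhere; one must argue that the locus $S \subset X$ of singular lines, being of dimension at most $2n-1$, cannot meet a general point, and then that a line through a general $x$ cannot be a singular line at all, because being singular is a property of the line as a whole and the entire singular family fails to dominate $X$. The clean way to organise this is to note that the image in $X$ of the universal family over the singular component $\ccS$ has dimension bounded by $\dim \ccS + 1 \le 2n < \dim X$, so a general point is disjoint from every singular line's support; hence every line through a general point is smooth.
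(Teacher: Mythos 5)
There is a genuine gap here: your argument is circular relative to the paper's logical structure. You invoke Theorem~\ref{thm_dimension_of_the_space_of_singular_lines} twice — once to get finiteness in the general polarised case ("the locus swept out by singular lines has codimension at least $2$, so a general point lies on no singular line"), and once to upgrade "smooth at $x$" to "smooth" in the contact case. But in this paper the implication runs the other way: Proposition~\ref{prop_Kebekus_singular_lines} is the cited input (it is exactly Kebekus's \cite[Thm~3.3(2)]{kebekus_families_of_singular_rational_curves} for the general statement and \cite[Prop.~3.3]{kebekus_lines1} for the contact statement), and Theorem~\ref{thm_dimension_of_the_space_of_singular_lines} is \emph{deduced} from it, by applying the contact case to get $S \neq X$ and then the general case to the polarised variety $(S, L|_S)$ to get $\dim \ccS \le \dim S - 1 \le 2n-1$. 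So the proposition cannot be proved from that theorem; it must be proved either by citation to Kebekus (as the paper does) or by reproducing his arguments (deformation theory of morphisms from a singular plane cubic, plus, in the contact case, nondegeneracy of $\ud\theta$ on $F$ — the mechanism generalised in the paper's Proposition~\ref{prop_singular_lines} and Corollary~\ref{cor_singular_lines_on_generically_contact}).

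Separately, your reading of the general statement is incorrect on its own terms. For an arbitrary polarised projective variety (possibly singular, non-normal) there is no codimension bound on the locus of singular lines, and "in fact zero pass through a general point" is false: an integral nodal rational curve $C$ with an ample degree-$1$ line bundle is itself a singular line through every one of its points. Kebekus's Thm~3.3(2) asserts precisely what the proposition says — finitely many singular members of an unsplit family through a general point, each smooth \emph{at} that point — not that there are none. Even in the contact setting the paper only establishes codimension $\ge 1$ for the locus $S$ (Corollary~\ref{cor_singular_lines_on_non_projective_contact} explicitly allows $S$ to be a divisor, in which case singular lines do pass through a general point of $S$); the reason no singular line passes through a general point of $X$ is not a codimension-$2$ count but the contact-geometric argument that a covering family of singular lines would force $T_s S \cap F_s$ to be $\ud\theta$-degenerate, contradicting nondegeneracy of the contact form when $S = X$.
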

For a proof see \cite[Thm~3.3(2)]{kebekus_families_of_singular_rational_curves} and \cite[Prop.~3.3]{kebekus_lines1}.
We will also generalise the latter result and proof in Proposition~\ref{prop_singular_lines}.
We will need the following combination of these two statements,
   that states that dimension of the variety parametrising singular contact lines on a contact Fano manifold is at most $2n-1$.

\begin{prop}\label{prop_dimension_of_the_space_of_singular_lines}
 In Settings~\ref{sett_contact_Pic_generated_by_L} and \ref{sett_for_singular_lines} combined we have $\dim \ccS \le 2n-1$.
\end{prop}

\begin{prf}
  By Proposition~\ref{prop_Kebekus_singular_lines} applied to $Y=X$ and $L_Y=L$ (contact case),
     $S \neq X$, thus $\dim S \le \dim X - 1 =2n$.
  Further, by Proposition~\ref{prop_Kebekus_singular_lines} applied to $(Y,L_Y)=(S, L|_S)$ (general case),
     there are finitely many singular lines through a general point of $S$.
  Thus $\dim \ccS = \dim S -1 \le 2n-1$ as claimed.
\end{prf}

\subsection{Singular lines as morphisms}

An integral singular rational curve can be always dominated by an
  integral singular plane cubic, that is, by a rational curve with a
  single node or cusp.
For that reason, (in Setting~\ref{sett_for_singular_lines}) let $Q$ be a singular plane cubic,
  and let $\Hom^{lin}(Q \to X)$ be the normalisation of the space of morphisms $f\colon Q\to X$
  such that the degree of the line bundle $f^* L$ is $1$.
Note that such morphism is automatically birational onto its image.

\begin{lemma}\label{lem_Hom_dominating_S}
In Setting~\ref{sett_for_singular_lines}, there exists a singular plane cubic $Q$ and 
 an irreducible component $\Hom_{\ccS}$ of the analytic space 
  \mbox{$\Hom^{lin}(Q \to X)$},
  which dominates $\ccS$, that is, we have the commutative diagram:
\[
  \xymatrix{
 & \Hom_{\ccS} \times Q \ar[ddr]^{\ev} \ar[dl]_{pr_1}\ar[d]_{im_Q} &   \\
 \Hom_{\ccS}\ar[d]^{im} & \ccU_{\ccS} \ar[dr]^{\phi} \ar[dl]_{\pi}& \\
   \ccS&& X}
\]
and the map $im \colon \Hom_{\ccS} \to \ccS$ is dominant.
Here $\ev(f, q)  = f(q)$ is the evaluation map, $pr_1$ is the projection map,
   $im(f)$ is the image curve, considered as a point in $\ccS$,
   and $im_Q(f, q) = (im(f), f(q))$.
  Moreover, fix a smooth point $q \in Q$.
  Then the map $im_Q|_{\Hom_{\ccS} \times \set{q}}\colon \Hom_{\ccS} \times \set{q} \to \ccU_\ccS$ is dominant too.
\end{lemma}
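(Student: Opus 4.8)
The plan is to establish the existence of $\Hom_{\ccS}$ and the dominance properties in two stages, first the diagram and $im$-dominance, then the final refinement with $im_Q$.

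\medskip

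\textbf{Stage 1: construction of $\Hom_{\ccS}$ and dominance onto $\ccS$.}
First I would recall that every integral singular rational curve $C \subset X$ of degree $1$ admits a parametrisation factoring through a singular plane cubic $Q$: the normalisation $\PP^1 \to C$ is the composition of $\PP^1 \to Q$ (the normalisation of the nodal/cuspidal cubic) with some $f\colon Q \to C \subset X$. Concretely, given an integral singular line, one glues or pinches two points of $\PP^1$ (node) or imposes a cuspidal tangency (cusp) to match the analytic singularity type of $C$ at its singular point; since $C$ is a singular line, its arithmetic genus forces exactly one such singular point, so a single $Q$ suffices, and the resulting $f$ satisfies $\deg f^*L = 1$, i.e.\ $f \in \Hom^{lin}(Q\to X)$. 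This produces a set-theoretic surjection from (a locus in) $\Hom^{lin}(Q\to X)$ onto the singular lines parametrised by $\ccS$. To make this scheme-theoretic and pick out a component, I would consider the natural map $\Hom^{lin}(Q\to X) \to \ccS$ sending $f$ to its image curve $im(f)$, which is a morphism of (Barlet/Chow) spaces because the image cycle depends algebraically (or analytically, in the class $\ccC$ case) on $f$. Since the source surjects onto the irreducible $\ccS$, some irreducible component $\Hom_{\ccS}$ of the source must dominate $\ccS$; this gives the map $im$ and its dominance. The maps $pr_1$, $\ev$, $im_Q$ are then defined exactly as in the statement, and the commutativity $\phi\circ \pi^{-1}\circ\cdots$ of the diagram is immediate from $\ev(f,q) = f(q) = \phi(im(f),f(q)) = \phi\circ im_Q(f,q)$.

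\medskip

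\textbf{Stage 2: dominance of $im_Q$ on $\Hom_{\ccS}\times\{q\}$.}
For the final claim I would fix a smooth point $q\in Q$ and analyse the map $im_Q|_{\Hom_{\ccS}\times\{q\}}\colon f \mapsto (im(f), f(q))$ into $\ccU_{\ccS}$. Surjectivity onto a dense subset amounts to showing that for a general curve $C$ in $\ccS$ and a general point $p \in C$, there is some $f\in\Hom_{\ccS}$ with $im(f)=C$ and $f(q)=p$. Given that $im$ is already dominant, I would fix a general $f_0$ with $im(f_0)=C$; then $f_0(q)$ is one point of $C$, and I need to show I can move $f(q)$ along $C$ while keeping the image fixed. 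The key is the reparametrisation freedom: the automorphism group $\mathrm{Aut}(Q)$ acts on $\Hom_{\ccS}$ by precomposition without changing $im(f)$, and since $q$ is a \emph{smooth} point of $Q$, the orbit of $q$ under $\mathrm{Aut}(Q)$ is all (or a dense open) of the smooth locus of $Q$. Composing $f_0$ with this action sweeps $f(q)$ over a dense subset of $C=im(f_0)$. Combining the already-established variation of $im(f)$ over $\ccS$ with this variation of $f(q)$ along each fixed image, the image of $im_Q|_{\Hom_{\ccS}\times\{q\}}$ is dense in $\ccU_{\ccS}$, hence $im_Q|_{\Hom_{\ccS}\times\{q\}}$ is dominant.

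\medskip

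\textbf{Main obstacle.}
The step I expect to require the most care is ensuring that a \emph{single} fixed cubic $Q$ works to dominate the whole component $\ccS$, and that $f\mapsto im(f)$ is a genuine morphism (not merely a set map) in the Barlet/Fujiki class $\ccC$ setting. Different singular lines could a priori have different analytic singularity types (node versus cusp), but a general member of $\ccS$ has a fixed singularity type, and a nodal $Q$ dominates the cuspidal case by a further degeneration, so one $Q$ (the nodal cubic) suffices for a dense subset of $\ccS$, which is all that dominance requires. Making the dependence $f\mapsto im(f)$ analytic uses that $X$ is quasi-projective or of class $\ccC$, precisely the running hypothesis of this subsection, so that cycle-theoretic constructions behave well; I would invoke this standing assumption rather than reprove it. The rest is a formal consequence of irreducibility of $\ccS$ and the $\mathrm{Aut}(Q)$-reparametrisation argument.
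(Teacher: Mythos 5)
Your overall strategy is the same as the paper's: Stage 1 is the factorisation of the normalisation of a singular line through a one-nodal or one-cuspidal curve, combined with algebraicity of the cycle map $f \mapsto im(f)$ and irreducibility of $\ccS$ to extract a dominating component $\Hom_{\ccS}$; Stage 2 is exactly the paper's argument, namely that $\operatorname{Aut}(Q)$ acts transitively on the smooth locus of $Q$ and precomposition preserves $im(f)$, so dominance of $im$ upgrades to dominance of $im_Q|_{\Hom_{\ccS}\times\{q\}}$. Stage 2 is fine as written (one only needs the identity component of $\operatorname{Aut}(Q)$, which is $\CC^*$ in the nodal case and the affine group in the cuspidal case, both transitive on the smooth locus).

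There is, however, a genuine error in your resolution of the node-versus-cusp dichotomy: the claim that ``a nodal $Q$ dominates the cuspidal case by a further degeneration, so one $Q$ (the nodal cubic) suffices.'' This is false, and the step would fail. Any $f \in \Hom^{lin}(Q \to X)$ is birational onto its image $C$: writing $d = \deg(Q \to C)$ and $e$ for the degree of $L$ pulled back to the normalisation of $C$, one has $1 = \deg f^*L = d\cdot e$ with $d \ge 1$ and $e \in \ZZ$, forcing $d=1$. Hence the composite $\PP^1 \to Q \to C$ of the normalisation of $Q$ with $f$ is a birational morphism from a smooth curve, i.e.\ it \emph{is} the normalisation of $C$; when $Q$ is nodal this composite identifies the two preimages of the node, so the image $C$ always has non-injective normalisation. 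Consequently a cuspidal-type line (injective but non-immersive normalisation) is never the image of any element of $\Hom^{lin}(Q_{\mathrm{nodal}} \to X)$, and if the general member of $\ccS$ is of that type, no component of $\Hom^{lin}(Q_{\mathrm{nodal}} \to X)$ can dominate $\ccS$. Degeneration points the wrong way here: cuspidal curves arise as limits of nodal ones in moduli, but that produces no morphism from the fixed nodal $Q$ onto a fixed cuspidal curve. The correct fix, which is what the paper does implicitly (and what \cite[Prop.~2.8]{kebekus_families_of_singular_rational_curves} formalises), is to let the choice of $Q$ depend on $\ccS$: every singular line is of nodal type or of cuspidal type, so the images of the countably many components of $\Hom^{lin}(Q_{\mathrm{nodal}}\to X)$ and $\Hom^{lin}(Q_{\mathrm{cusp}}\to X)$ under the cycle morphism cover $\ccS$; since $\ccS$ is irreducible and $\CC$ is uncountable, one of these images is dense, and this fixes both $Q$ and $\Hom_{\ccS}$. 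Separately, your remark that the arithmetic genus of a singular line forces exactly one singular point is unjustified ($L$-degree $1$ for a merely ample, or non-projective, polarisation bounds neither the genus nor the number of singular points), but it is harmless: the factorisation through $Q$ only requires choosing one singular ``witness'' (a pair of identified points, or a point of vanishing differential), and other singular points of $C$ may simply lie under smooth points of $Q$.
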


\begin{prf}
  Pick a general point $c \in \ccS$. 
  If $c$ represents a curve which has at least one node, pick $Q$ to be the nodal plane cubic.
  Otherwise, pick $Q$ to be the cuspidal plane cubic.
  Let $\PP^1 \to Q$ be the normalisation map.
  The map $\Hom^{lin}(Q \to X) \times \PP^1 \to\Hom^{lin}(Q \to X)$ together with a map $\xi$, which is the composition of the normalisation 
    and $\ev$, makes a family of lines.
  Thus there is a map from $\Hom^{lin}(Q \to X)$ to the cycle space by Proposition~\ref{prop_linear_spaces_and_Chow_variety}.
  All singular lines with a node or a cusp (depending on the singularity of $Q$) will be in the image of this map.
  In particular, by the generality of our choice of $c$, the component $\ccS$ is dominated by an irreducible component of $\Hom^{lin}(Q \to X)$, showing the first claim.
  
  To see that $im_Q|_{\Hom_{\ccS} \times \set{q}}$ is dominant, we note that the automorphism group of $Q$ acts transitively on the smooth points of $Q$.
  Compare also with \cite[Prop.~2.8]{kebekus_families_of_singular_rational_curves}.
\end{prf}

\subsection{Singular lines and distribution}\label{sect_sing_lines_and_distributions}
For the rest of this section we will assume in addition to Setting~\ref{sett_for_singular_lines} that 
  $0\to F \to T X \stackrel{\theta}{\to} L \to 0$
  is a short exact sequence of vector bundles as in \S\ref{sect_corank_1_distributions},
  so that $X$ is a manifold with a global corank $1$ distribution 
  (and the line bundle $L$ from Setting~\ref{sett_for_singular_lines} coincides with the quotient $TX/F$).
Later, we will also assume that $F$ is either generically or globally a contact distribution.

The locus $S$ swept by singular lines $\ccS$ contains an open dense subset $S^0$, that is a locally closed analytic subspace of $X$.
Define the distribution $G \subset T S^0$  by $G:=T S^0 \cap F|_{S^0}$ 
   (here, if $S^0$ is not smooth, $T S^0$ is a distribution in $T X|_{S^0}$, as discussed in Notation~\ref{notation_distribution_on_singular}).
By Observation~\ref{obs_corank_1_distribution},
  either $S^0$ is $F$-integral, or there exists a further open dense subset $S' \subset S^0$,
  such that $(S', G|_{S'})$ is a manifold with a global corank $1$ distribution.

\begin{prop}
  \label{prop_singular_lines}
  In Setting~\ref{sett_for_singular_lines}, assume in addition that $X$ has 
  a global corank $1$ distribution $F$ such that $L \simeq TX/F$.
  Suppose that $c \in \ccS$ is a general point corresponding to a singular line $C\subset X$.
  Then $C\cap S_0$ is generically tangent to the degeneracy subdistribution of $G|_{S_0}$.
\end{prop}
The proof of this proposition follows quite strictly the lines of the proof of \cite[Prop.~3.3]{kebekus_lines1},
  however our statement is stronger.

\begin{proof}
  Since $C\subset S$, the claim is clear 
    if $S^0$ is $F$-integral.
  Thus, using Observation~\ref{obs_corank_1_distribution},
    we may suppose that $S'$ is a manifold with a global corank~$1$
    distribution, where $S'$ is open and dense in $S$.

Pick a singular plane cubic $Q$ and an irreducible component $\Hom_{\ccS}$ as in Lemma~\ref{lem_Hom_dominating_S},
    so that the map $\ev_q:\Hom_{\ccS} \to S$, $\ev_q(f) = f(q)$ is dominant for any smooth point $q$ of $Q$ (compare to the discussion at the end of \S\ref{sect_barlet_and_chow}).

  For a general morphism $f\in \Hom_{\ccS}$ the tangent map of $\ev_q$ has the maximal
     rank at $f$, that is, $\rk T_f \ev_q= \dim S$.
  The set of pairs $(f,q)$ for which the rank is maximal is open in $\Hom_{\ccS} \times Q$.
  By \cite[II.6.10.2, II.6.11.4 and Ex.~II.6.7]{hartshorne}
    the smooth points of $Q$ are in 1:1-correspondence with line bundles
    of degree one.
  So fix a general $f\in \Hom_{\ccS}$, and a general point $q\in Q$,
    such that $\ccO_Q(q)\ncong f^*(L)$ and $\rk T_f \ev_q= \dim S$, see Lemma~\ref{lem_Hom_dominating_S}.
  Note that also $\ccO_Q(q')\ncong f'^*(L)$ and $\rk T_{f'} \ev_{q'}= \dim S$ for all $q'$ in a small neighbourhood of $q \in Q$ and $f'$ in a small neighbourhood of $f\in \Hom_{\ccS}$.  

  Let $s=f(q)$ and $C = f(Q)$. Note that $C$ and $s$ are general.
Suppose to the contrary of the claim of the proposition that $T_s C$ is not tangent to the degeneracy locus of $G$ at $s$. 
      By Lemma~\ref{lem_maximally_integral} applied to
      the manifold with global corank $1$ distribution $(S', G|_{S'})$,
      there exists $\Delta \subset S'$, a maximally $G$-integral analytic submanifold of $S'$,
      which is transversal to $C$ at $s$.
  Since $\ev_q$ has the maximal rank, we can find a section $\Gamma \subset \Hom_{\ccS}$
  over $\Delta$, that is, a submanifold $\Gamma$ such that
  $\ev_q|_{\Gamma}:\Gamma\to \Delta$ is biholomorphic near $f \mapsto s$.
  By the construction,  $\ev (\Gamma \times Q)$ contains a small analytic submanifold $\Delta'$ of $S'$
    of dimension $\dim \Delta +1$,
    that contains $\Delta$ and $C = f(Q)$.
  In particular, $\Delta'$ cannot be $G$-integral.

  So there exists $(f', q') \in \Gamma \times Q$ in a small neighbourhood of $(f,q)$,
    such that $T_{f'(q')} \Delta' \not \subset G_{f'(q')}$,
    so $\theta(T_{f'(q')} \Delta') \ncong 0$.
  But
  \begin{align*}
     T_{f'(q')} \Delta' &= T_{(f', q')} \ev (T_{f'} \Gamma + T_{q'} Q) = T_{f'} \ev_{q'} (T_{f'} \Gamma) + T_{f'(q')} f'(Q) \\
     & \subset  \set{\sigma(q') \mid \sigma \in H^0((f')^* T X)} + T_{f'(q')} f'(Q).
  \end{align*}
  Since $\theta ( T_{f'(q')} f'(Q))  =0$ by Lemma~\ref{lem_lines_are_integral},
    there must exist a section $\sigma \in H^0((f')^* T X)$,
    such that $(f')^*(\theta)(\sigma)(q') \ne 0$.
  But $(f')^*(\theta)(\sigma) \in H^0((f')^*L)$ and
  \[
    \sigma(q) \in T_{f'(q)} \Gamma \subset G_{f'(q)}
  \]
  So $(f')^*(\theta)(\sigma) \ne 0$ and it vanishes at $q$, a contradiction with our choice of $q$.
  So $T_s C$ is tangent to the degeneracy locus of  $G$ at $s$ as claimed.
\end{proof}

Suppose that $X$ is a generically contact manifold as defined in Section~\ref{sect_introduction}.
Translating it into the language introduced in 
Section~\ref{sect_corank_1_distributions}, 
$X$ is a manifold with a global corank $1$ distribution 
and the rank $\rk \ud \theta_x$ 
from Notation~\ref{def_perp}\ref{item_notation_dtheta} 
is equal to $\dim X-1$ (maximal possible value) for a general point $x$.
We obtain the following corollaries which generalise Proposition~\ref{prop_Kebekus_singular_lines} 
  to the situation, where either $X$ is a projective generically contact manifold,
  or $X$ is a contact manifold, which is not necessarily projective.
In the first situation, $X$ could be for instance a birational modification 
  of a projective contact manifold.
In the second situation, $X$ can be a quasi-projective contact manifold 
  (see \cite{hwang_manivel_quasi_complete_contact})
  or a compact contact manifold of Fujiki class $\ccC$ 
  (see \cite{frantzen_peternell}, \cite{peternell_jabu_contact_3_folds}).

\begin{cor}\label{cor_singular_lines_on_generically_contact}
  In Setting~\ref{sett_for_singular_lines}, 
  suppose in addition that $(X,F)$ is a generically contact manifold of dimension $2n+1$ such that $TX/F\simeq L$.
  Then $S \ne X$, that is, the singular contact lines do not cover $X$.
  If in addition $X$ is projective and $L$ is ample, then $\dim \ccS \le 2n-1 = \dim X -2$. 
\end{cor}

\begin{prf}
  Suppose that $\ccS$ is non-empty,  $c\in \ccS$ is a general singular line,
     and $s \in C$ is a general point on this line.
  Then by Proposition~\ref{prop_singular_lines},
    one has $T_s S \cap F_s \subset T_s C^{\perp} \subsetneqq F_s$.
  Thus $T_s S \subsetneqq T_s X$ and $S \ne X$.

  If $X$ is projective and $L$ is ample,
     then by Proposition~\ref{prop_Kebekus_singular_lines} (general case applied to $S$) 
     we get $\dim \ccS \le \dim S -1 \le \dim X -2$.
\end{prf}

\begin{cor}\label{cor_singular_lines_on_non_projective_contact}
  In Setting~\ref{sett_for_singular_lines} 
     suppose that $(X,F)$ is a (globally) contact manifold 
     such that $TX/F\simeq L$.
  If $\dim S = \dim X-1= 2n$,
    then $\dim \ccS = \dim X-2 = 2n-1$.
\end{cor}

\begin{proof}
  Let $S_0 \subset S$ be the smooth locus of $S$.
  Since $S$ has codimension $1$ in $X$,
    hence $G:=(T S)^{\perp F}$ is a rank one distribution in $T S_0$,
  and by Proposition~\ref{prop_singular_lines}, the singular lines in $\ccS$
    must be the leaves of $G$.
  In particular, there is a unique such line through a general point of $S$, and we must have
    $\dim \ccS = \dim S -1 = \dim X-2$ as claimed.
\end{proof}

\section{Divisors of non-standard lines}\label{sect_divisors_of_non_standard_lines}

%

Throughout this section we stick to the following setting.
\begin{setting}\label{setting_divisor_of_non-standard_lines}
   In addition to Setting~\ref{sett_contact_Fano}, we assume $\ccB\subset \ccH$ is a closed irreducible subset of codimension $1$ containing only non-standard lines., 
     $B\subset X$ is its locus. For $x\in X$ we denote by $H_x$ the locus of lines from $\ccH_x$. 
   If in addition $x\in B$, then $\ccB_x$ is the set of lines in $\ccB$ that pass through a point $x\in B$, $B_x$  is the locus of $\ccB_x$. 
   (See \S\ref{sect_parameter_spaces_for_lines_on_contact}.)
   If $c$, $c'$ are points in $\ccB$, they represent non-standard lines in $X$, which we denote by $C$ and $C'$ respectively. 
\end{setting}

By Corollary~\ref{cor_non_standard_lines_do_not_cover_X} we must have $B \ne X$.
The lemma below explains that the claim of \cite[Prop.~3.2]{kebekus_lines2} is equivalent to the claim that $\ccB$ as above does not exist.
(Recall from \S\ref{sec_history_lines} that the proof of \cite[Prop.~3.2]{kebekus_lines2} has a gap 
  which invalidates the claim of \cite[Thm~1.1(2)]{kebekus_lines2} that $\ccH_x$ is irreducible for a general point $x\in X$.)
\begin{lemma}\label{lem_ccB_x_is_a_union_of_irreducible_components}
   In Setting~\ref{setting_divisor_of_non-standard_lines}, 
   for all $x \in B$ the set $\ccB_x$ is a union of irreducible components of $\ccH_x$.
   In particular, $\dim \ccB_x = n-1$, and analogously, $B_x$ is a union of irreducible components of $H_x$ and $\dim B_x = n$.
   Moreover, $B$ is a divisor, that is, $\dim B =2n$.
\end{lemma}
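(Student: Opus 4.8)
The plan is to run a dimension count on the universal family of $\ccB$, and then to upgrade the generic conclusion to one valid for every $x \in B$ by combining the lower bound on fibre dimensions with the purity of $\ccH_x$.

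First I would record the basic dimensions. Since $\ccB$ is irreducible of dimension $3n-2$ and its universal family $\ccU_{\ccB} = \inv{\pi}(\ccB)$ has irreducible one–dimensional fibres over $\ccB$, the family $\ccU_{\ccB}$ is irreducible of dimension $3n-1$. The projection $\phi \colon \ccU_{\ccB} \to X$ is proper, because $\ccU_{\ccB}$ is a closed subscheme of the projective variety $\ccH \times X$; hence its image $B$ is closed and irreducible, and by Corollary~\ref{cor_non_standard_lines_do_not_cover_X} we have $B \ne X$, so $\dim B \le 2n$. For a general $x \in B$ the restriction of $\pi$ identifies the fibre $\inv{\phi}(x)$ with $\ccB_x$ via a bijective, hence finite, morphism, so $\dim \ccB_x = \dim \inv{\phi}(x) = \dim \ccU_{\ccB} - \dim B = 3n-1-\dim B$. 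On the other hand $\ccB_x \subset \ccH_x$, which has pure dimension $n-1$ by Proposition~\ref{prop_Hx_Legendrian}, so $\dim \ccB_x \le n-1$. Combining the two bounds gives $3n-1-\dim B \le n-1$, forcing $\dim B \ge 2n$ and therefore $\dim B = 2n$ and $\dim \ccB_x = n-1$ for general $x$. This already shows that $B$ is a divisor.

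To obtain the statement for \emph{all} $x \in B$, I would invoke the standard lower bound on fibre dimensions: for the dominant morphism $\phi \colon \ccU_{\ccB} \to B$ of irreducible varieties, every irreducible component of every nonempty fibre $\inv{\phi}(x)$ has dimension at least $\dim \ccU_{\ccB} - \dim B = n-1$. Since $\pi$ identifies $\inv{\phi}(x)$ with $\ccB_x$, every irreducible component of $\ccB_x$ has dimension at least $n-1$; but $\ccB_x \subset \ccH_x$ and $\dim \ccH_x = n-1$, so in fact every component of $\ccB_x$ has dimension exactly $n-1$. By the purity of $\ccH_x$, each such $(n-1)$–dimensional irreducible component of $\ccB_x$ is contained in, and hence equal to, an irreducible component of $\ccH_x$; thus $\ccB_x$ is a union of irreducible components of $\ccH_x$ and, in particular, $\dim \ccB_x = n-1$.

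Finally, the analogous statement for $B_x$ follows by pushing forward along $\phi$. The restriction $\inv{\pi}(\ccB_x)$ of the universal family is pure of dimension $n$, being a one–parameter family of curves over the pure $(n-1)$–dimensional $\ccB_x$, and the finiteness of lines through two distinct points (Lemma~\ref{lem_bend_and_break}) shows that the map $\inv{\pi}(\ccB_x) \to B_x$ is generically finite on each component, so $B_x$ is pure of dimension $n$. As the reduced scheme $H_x$ is Legendrian of pure dimension $n$ by Proposition~\ref{prop_Hx_Legendrian}, each $n$–dimensional component of $B_x \subset H_x$ is a component of $H_x$, whence $B_x$ is a union of irreducible components of $H_x$. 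The one delicate point of the argument is the passage from the generic to the pointwise statement, where the purity of $\ccH_x$ is essential; everything else is dimension bookkeeping.
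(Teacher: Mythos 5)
Your proposal is correct and takes essentially the same approach as the paper: the paper compresses your two steps (generic fibre count to get $\dim B = 2n$, then the pointwise fibre-dimension lower bound plus purity) into the single chain $n-1 = \dim \ccH_x \ge \dim \ccB_x \ge \dim \ccB + 1 - \dim B \ge n-1$, valid for every $x \in B$, using exactly your ingredients — purity of $\ccH_x$ and $H_x$ from Proposition~\ref{prop_Hx_Legendrian}, the lower bound on fibre dimensions of $\ccU_{\ccB} \to B$, and $B \ne X$ from Corollary~\ref{cor_non_standard_lines_do_not_cover_X}. Your explicit generically-finite dimension count for $B_x$ (via Lemma~\ref{lem_bend_and_break}) is just an unpacked version of the paper's closing remark that each component of $H_x$ is the locus of a component of $\ccH_x$.
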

\begin{prf}
   $\ccH_x$ is of pure dimension $n-1$, and $H_x$ is of pure dimension $n$  by Proposition~\ref{prop_Hx_Legendrian}.
   Thus 
   \[
    n-1 = \dim \ccH_x \ge \dim \ccB_x \ge  \dim \ccB + 1 - \dim B \ge n-1
   \]
   and thus the pure dimension of $\ccB_x$ is $n-1$ and $\ccB_x$ is a union of irreducible components of $\ccH_x$.
   Moreover, $\dim B = 2n$.
   Since each component of $H_x$ is a locus swept by a component of $\ccH_x$, it follows that $B_x$ is also a union of irreducible components of $H_x$.
\end{prf}

To prove Theorem~\ref{thm_dimension_of_the_space_of_special_lines} we must show that $B$ is covered by linear subspaces of dimension $n$.
We jump up and down to get this result.
By construction, we start knowing that $B$ is covered by lines, that is linear subspaces of dimension $1$.
First we jump up and claim that $B$ is covered by $\PP^k$'s for some $k\ge 2$.
Then we fall down and argue that $k=2$. 
Finally, we jump again and show that there is a lot of those planes, enough to fill in $\PP^n$.

\subsection{The first jump}\label{sect_first_jump}
In this subsection we show that the locus $B$ of an irreducible component of special lines of dimension $3n-2$ 
   is covered by linear subspaces of dimension $k$ for some $k\ge 2$.

The locus $B$ comes with the rank $1$ distribution $G:=(T B)^{\perp_F}  \subset T X|_{B}$ as in Notation~\ref{def_perp} and \eqref{equ_define_perp}.
We follow the abbreviation as in Notation~\ref{notation_distribution_on_singular}.
Below we compare the distribution $G$ to another distribution $\tilde G_{x} \subset T \ccB_x$ 
  for a general point $x\in B$, 
  which has the property that (in informal words) moving a curve along $\tilde G_{x}$ is the same as moving general point{s} of the curve along $G$.
This arises from comparing the locally closed submanifolds constructed in Corollary~\ref{cor_locus_of_lines_has_a_submanifold} and from Lemma~\ref{lem_union_of_generic_leaves}. 
In order to construct $\tilde{G}$, we use the \emph{Hilbert to Chow morphism}, and in fact we construct $\tilde{G}$ on a subset of the Hilbert scheme.

\begin{prop}[Hilbert to Chow morphism]\label{prop_Hilb_to_Chow}
    In Setting~\ref{setting_divisor_of_non-standard_lines}, 
      there exists an irreducible and reduced component $\Hilb$ of the Hilbert scheme of $X$ and a projective birational morphism
      $\varpi\colon\Hilb\to \ccH$ 
      which takes a point of $\Hilb$ representing a subscheme of $X$ to its underlying cycle in $\ccH$.
    Moreover, $\varpi$ is an isomorphism on the set of smooth curves.
    More precisely, if $\ccS\subset \ccH$ is the subset of singular lines, 
      then the restriction of $\varpi$ to $\varpi^{-1}(\ccH \setminus \ccS) \to \ccH \setminus \ccS$ is an isomorphism. 
\end{prop}
\begin{prf}
Pick a general point $c_{gen} \in \ccH$ in the irreducible component of lines.
This point represents a smooth and standard (in particular, free) rational curve $C_{gen} \subset X$, 
   which at the same time is a subscheme of $X$, hence represents a smooth point of the Hilbert scheme of $X$.
The tangent space to the Hilbert scheme at this point is the space of sections of the normal bundle $H^0(N_{C \subset X})$,
   see \cite[Thm.~I.2.8(1)]{kollar_book_rational_curves}.
By \eqref{equ_standard_splitting_type} the dimension of this tangent space is $3n-1$.

Let $\Hilb$ be the (unique) irreducible and reduced component of the Hilbert scheme that contains the point representing $C_{gen}$.
The Hilbert to Chow morphism is constructed in \cite[Cor.~3.3]{mangusson_Douady_Barlet_morphism} and 
    maps $\varpi\colon\Hilb \to \ccH$.
Note that this result of Mangusson is stronger than \cite[Thm~I.6.3(1)]{kollar_book_rational_curves}, 
    where the Hilbert to Chow morphism is constructed only from 
    the seminormalisation $\Hilb^{sn} \to \ccH$.
Since $C_{gen}$ is reduced and irreducible and it represents a smooth point of $\Hilb$ 
    (so the seminormalisation is an isomorphism near this point),
    by \cite[Thm~I.6.3(2)]{kollar_book_rational_curves} the Hilbert to Chow morphism is injective near this point. 
By dimension count $\dim \Hilb = \dim \ccH = 3n-1$ the map is also dominant.
Since both varieties $\ccH$ and $\Hilb$ are projective, the map is surjective.
Since a general curve $C_{gen}$ is represented by a single point in both $\Hilb$ and $\ccH$,
   it follows that the map is birational.

To complete the proof we must show that $\varpi$ is an isomorphism on the set of smooth curves.
To see this we produce the inverse map $\ccH \setminus \ccS \to \Hilb$.
The map comes from the universal property of the Hilbert scheme, as the universal family 
   $\ccU_{\ccH\setminus \ccS} \subset X\times (\ccH\setminus\ccS)$ is flat over $\ccH\setminus\ccS$.
Indeed, $L$ is ample on $X$, and the Hilbert polynomial $\chi(L|_{C}^d) = \chi(\ccO_{\PP^1}(d))$ is constant on all the fibres 
  (independent of the smooth curve $C \subset X$ represented by a point in $\ccH\setminus\ccS$),
  so by \cite[Thm~III.9.9]{hartshorne} the family is flat and induces a map from $\ccH\setminus\ccS$ to the Hilbert scheme. 
It is straightforward to verify that the map is an inverse of the Hilbert to Chow map $\varpi|_{\varpi^{-1}(\ccH\setminus\ccS)}$.
\end{prf}

\begin{cor}\label{cor_construction_of_G_tilde}
   In Setting~\ref{setting_divisor_of_non-standard_lines}, pick a point $x \in B$.
   \begin{enumerate}
      \item \label{item_tangent_space_to_ccB_x}
            If $c\in \ccB_x$ is a point which represents a smooth curve $C\subset X$, then 
      \[
        T_{c} \ccB_x \subset H^0(N_{C\subset X} \otimes \gotm_x) \simeq H^0(N_{C\subset X}(-1)),
      \]
      where $N_{C \subset X}$ is the normal bundle of $C$ in $X$ and $\gotm_x$ is the ideal sheaf of the point $x$ in the curve $C$.
      \item \label{item_splitting_for_normal_bdle}
            Suppose that $c\in \ccB$ is a general point.
            Then $c$ represents a smooth curve $C\subset X$, which is generically transversal to the distribution $G=TB^{\perp_{F}}$, and $N_{C \subset X}(-1) \simeq \ccO_C(-b-1,(-1)^n, 0^{n-2}, b)$ 
               for some integer $b\ge 1$.
      \item \label{item_defining_G_tilde}
            With $c$, $C$, and $b$ as in \ref{item_splitting_for_normal_bdle}, 
               assume $x\in C$ is a general point.
            We have two distinguished subspaces of the $(n+b-1)$-dimensional vector space $H^0(N_{C\subset X}(-1))$:
               the $(n-1)$-dimensional tangent space $T_c \ccB_x$ (which depends on $x$) and the $(b+1)$-dimensional space $H^0(\ccO_{C}(b))$ coming from the unique (up to rescaling) embedding $\ccO_C(b) \hookrightarrow N_{C\subset X}(-1)$.
            Then their intersection $\tilde{G}_{x, c} := T_c \ccB_x \cap H^0(\ccO_C(b))$ is $1$-dimensional (that is, of expected dimension).
  \end{enumerate}
\end{cor}

\begin{prf}
     Since $c$ represents a smooth curve $C\subset X$, 
        the Hilbert to Chow morphism $\varpi$ is an isomorphism near the preimage of $c$ by Proposition~\ref{prop_Hilb_to_Chow}.
     In particular, $\varpi$ determines a natural isomorphism $T_{c} \ccH = T_{\varpi^{-1}(c)}\Hilb$, 
        and the latter is equal to $H^0(N_{C \subset X})$ by \cite[Thm.~I.2.8(1)]{kollar_book_rational_curves}. 
     Thus $T_{c} \ccB_x \subset H^0(N_{C \subset X})$ and the infinitesimal deformations contained in $\ccB_x$ must 
        vanish at $x$. Therefore  $T_{c} \ccB_x \subset H^0(N_{C \subset X} \otimes \gotm_x)$ as claimed in \ref{item_tangent_space_to_ccB_x}.

     In the setting of \ref{item_splitting_for_normal_bdle}, note that since $c$ is a general element of $\ccB$, 
        thus it is smooth and generically transversal to $G$ by a dimension count:
         by Theorem~\ref{thm_classification_in_low_dimension} we have $n\ge 2$, thus $\dim \ccB =3n-2 > 2n-1$.
     Thus $C$ is smooth by Proposition~\ref{prop_dimension_of_the_space_of_singular_lines}
         and there is a family of dimension at most $2n-1$  of algebraic curves tangent to $G$.
     The splitting type follows from Lemma~\ref{lem_splitting_types}. The integer $b$ is positive by our choice of $\ccB$ 
     (it has non-standard lines only).
     
     Finally, in \ref{item_defining_G_tilde}, reverting the order of choices of $x$ and $c$,
        note that $x$ is a general point of $B$ and $c \in \ccB_x$ is a general (in particular smooth) point of an irreducible component of $\ccB_x$.
     Since the locus of this irreducible component of $\ccB_x$ is an irreducible component of $B_x$,
        which has dimension $n$ (Lemma~\ref{lem_ccB_x_is_a_union_of_irreducible_components}), for a general point $y\in C$ the evaluation of sections in $T_c \ccB_x\subset H^0(N_{C\subset X} \otimes \gotm_x)$ at $y$ must be (at least)
        an $(n-1)$-dimensional space ($1$ dimension tangent to $B_x$ at $y$ comes from the tangent space to $C$, 
        the remaining $n-1$ dimensions come from moving $c$).
     But if the intersection of $T_c \ccB_x$ and $H^0(\ccO_C(b))$ has dimension higher than $1$,
        then the evaluation of the sections from $T_c \ccB_x$ at any point $y \in C$ can have dimension at most $n-2$,
        a contradiction.
     Therefore the intersection $\tilde{G}_{x, c}$ is $1$-dimensional as claimed. 
\end{prf}

\begin{defin}
     In Setting~\ref{setting_divisor_of_non-standard_lines}, let $x\in B$ be a general point. 
     We define the rank $1$ distribution $\tilde{G}_x \subset T \ccB_x$ (with the abbreviation as in Notation~\ref{notation_distribution_on_singular}),
       to be $\tilde{G}_{x,c} \subset T_c \ccB_x$ for a general point $c\in \ccB_x$ in any of the irreducible components, 
       where $\tilde{G}_{x,c} $ is as in Corollary~\ref{cor_construction_of_G_tilde}\ref{item_defining_G_tilde}.
\end{defin}

\begin{cor}\label{cor_G_is_O_of_b_plus_1}
  In Setting~\ref{setting_divisor_of_non-standard_lines}, with $c\in \ccB$ general (and $C\subset X$ the corresponding curve), 
     the restrictions of the distributions $TB \subset TX|_{B}$ and $G \subset TX|_{B}$ 
     to the smooth rational curve $C$ are (more precisely, can be extended to) the following vector subbundles:
  \[
     TB|_{C} = \ccO_C(0^{n}, 1^{n-2}, 2, b+1), \qquad G|_{C} = \ccO_C(b+1),
  \]
  such that the image of $G|_{C}$ under the quotient $TX|_C\to N_{C\subset X}$ is also $\ccO_C(b+1)$.
\end{cor}
\begin{prf}
  This follows from Lemma~\ref{lem_perp_of_TB} together with the fact that $C$ is smooth 
    and $TC$ is generically transversal to $G|_{C}$, 
    see Corollary~\ref{cor_construction_of_G_tilde}\ref{item_splitting_for_normal_bdle}.
\end{prf}

Since $\rk G = 1$, the distribution $G$ must be integrable, so locally there exists a small leaf $\Delta_x$ 
  through a general point $x \in B$,
  that is $T \Delta_x = G|_{\Delta_x}$.
In the course of the proof, we will see that the Zariski closure of each leaf of $G$ is a line. 

We will define a subset $\Gamma_c \subset B$ (that depends on the choice of $c$), which is extremely important for our arguments.
Informally, $\Gamma_c = \bigcup_{x \in C} \Delta_x$ is the union of leaves through general points of $C$
(see Lemma~\ref{lem_union_of_generic_leaves} 
   and Definition~\ref{def_union_of_generic_leaves}).
The Zariski closure of $\Gamma_c$ is our candidate for a linear subspace of dimension $k\ge 2$, as in the claim of the first jump.
In informal words, $\Gamma_c$ is a surface obtained by perturbing the points of $C$ in the directions of $G$.
We will show that it is also obtained by perturbing the line $C$ in the direction of $\tilde{G}$.

\begin{defin}
  Suppose that $\Gamma_1$ and $\Gamma_2$ are two smooth connected locally analytic subsets of a fixed variety. 
  We say $\Gamma_1$ and $\Gamma_2$ \emph{generically agrees},
     if their intersection $\Gamma_1\cap \Gamma_2$  
     contains a nonempty subset that is Euclidean open  in both $\Gamma_1$ and $\Gamma_2$.
\end{defin}
So if $\Gamma_1$ and $\Gamma_2$ generically agree, we may think of them as if they are analytic continuations of one another. 
In particular, the Zariski closures of $\Gamma_1$ and $\Gamma_2$ are equal.

\begin{prop}\label{prop_gamma_and_tilde_gamma_generically_agree}
   In Setting~\ref{setting_divisor_of_non-standard_lines}, fix a general point $c\in \ccB$,
      which represents a smooth non-standard line $C\subset X$,
      and fix a general point $x\in C$.
   Let $\ccR_{x, c}\subset \ccB_x$ be a small leaf of $\tilde{G_x} \subset T\ccB_x$ 
      (again, see Notation~\ref{notation_distribution_on_singular})
      through $c \in \ccB_x$.  
   Consider the following smooth connected two-dimensional locally analytic subsets of $B$ that contain a Zariski open subset of points of $C$:
   \begin{itemize}
    \item One which we denote by $\Gamma_c$ that, in addition to the above conditions,
            satisfies $T \Gamma_C \supset G|_{\Gamma_C}$, and is constructed in Lemma~\ref{lem_union_of_generic_leaves} 
            (union of $G$-leaves through general points of $C$).
    \item One which we denote by $\tilde{\Gamma}_{x, c}$ that, in addition to the above conditions, 
            is Euclidean open in the locus of lines in $\ccR'=\ccR_{x, c} \subset \ccB_x$ 
            as constructed in Corollary~\ref{cor_locus_of_lines_has_a_submanifold}.
   \end{itemize}
   Then these subsets generically agree.
\end{prop}
\begin{prf}
   Let $y\in \tilde{\Gamma}_{x, c}$ be a general point 
      (that is, for any point $y$ in an unspecified open dense subset of $\tilde{\Gamma}_{x, c}$ the following arguments are going to work).
   Thus there is a line $C'$ corresponding to $c' \in \ccR_{x, c}$ such that $y \in C'$.
   Then $T_y \tilde{\Gamma}_{x, c}$ is equal to the evaluation at $y$ of the set of sections in $H^0(TX|_{C'})$ 
      that are mapped to $T_{c'} \ccR_{x,c} = \tilde{G}_{x,c'} \subset H^0(N_{C' \subset X})$
      under the quotient $H^0(TX|_{C'}) \to H^0(N_{C' \subset X})$.
   Thus by the characterisations of $\tilde{G}_x$ (Corollary~\ref{cor_construction_of_G_tilde}) and of $G$ (Corollary~\ref{cor_G_is_O_of_b_plus_1}),
      we have $T_y \tilde{\Gamma}_{x, c} = T_y C' + G_y$.
   In particular, $G|_{\tilde{\Gamma}_{x, c}} \subset T \tilde{\Gamma}_{x, c}$, and thus the (sufficiently small) leaves of $G$ through (general) points of $\tilde{\Gamma}_{x, c}$ are contained in $\tilde{\Gamma}_{x, c}$.
   
   Note that $\ccR_{x,c}$ generically agree with  $\ccR_{x,c'}$, as they are both leaves of the same distribution $\tilde{G}_x$.
   In particular, by the generality of our choice of $c$, the above statement also holds for points of $C$:
   the leaves of $G$ through general points of $C$ are contained in $\tilde{\Gamma}_{x, c}$.
   
   To conclude, note that both subsets $\Gamma_c$ and $\tilde{\Gamma}_{x, c}$ contain a Zariski dense open subset of $C$, 
      in particular, their intersection is non-empty.
   By the construction of $\Gamma_c$ and the above considerations, they both contain the leaves of $G$ through the general points of $C$.
   Therefore they generically agree.
\end{prf}

Define $\overline{\Gamma_c}$ to be the Zariski closure of $\Gamma_c$.
Note that potentially, $\overline{\Gamma_c}$ is of a dimension higher than $2$, but we will observe this is not the case.

\begin{lemma}\label{lemma_any_points_on_Gamma_c_are_connected_by_a_line}
   In Setting~\ref{setting_divisor_of_non-standard_lines}, let $c\in \ccB$ be a general point.
   Then any two points $x,y \in \overline{\Gamma_c}$ are connected by a contact line from $\ccB$
      contained in $\overline{\Gamma_c}$.
\end{lemma}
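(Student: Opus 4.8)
The plan is to first establish the statement on the analytic surface $\Gamma_c$ itself, where we have good control through the two rulings, and then to propagate it to the possibly larger Zariski closure $\overline{\Gamma_c}$ by a closedness argument. The starting observation is that the surface $\Gamma_c=\bigcup_{x'\in C}\Delta_{x'}$ depends only on the line $C$ and not on the auxiliary point $x$ used to produce the ruling. Consequently, repeating the construction of $\tilde G$ and of $\tilde\Gamma_c$ verbatim with any general point $x''\in C$ in place of $x$, one sees that $\Gamma_c$ is also swept out, on a dense open subset, by a $1$-parameter analytic family of lines from $\ccB$ passing through $x''$; in other words $\Gamma_c$ carries, for every general base point on $C$, a ruling by lines of $\ccB$ through that point. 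The same reasoning applies to any general line $\ell\in\ccB$ transverse to $G$ with $\ell\subset\Gamma_c$: for every general base point $z\in\ell$ the surface $\Gamma_\ell=\bigcup_{z'\in\ell}\Delta_{z'}$ is ruled by lines of $\ccB$ through $z$.

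First I would connect a general pair $p,q\in\Gamma_c$. Since the lines of $\ccB$ through $x$ sweep out $\Gamma_c$, there is one such line $\ell_p$ passing through $p$; as $p$ is general, $\ell_p$ is a general member of the ruling, hence transverse to $G$ (a line of the ruling tangent to $G$ would be the unique leaf $\Delta_x$ through $x$). Because $G$ is tangent to $\Gamma_c$ along its smooth locus and $\ell_p\subset\Gamma_c$, every leaf of $G$ through a point of $\ell_p$ stays in $\Gamma_c$; these leaves sweep out $\Gamma_{\ell_p}$, a $2$-dimensional analytic subset of the $2$-dimensional surface $\Gamma_c$, so $\Gamma_{\ell_p}$ and $\Gamma_c$ coincide on a dense open subset. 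Applying the symmetric-ruling fact to $\ell_p$ with base point $p\in\ell_p$, the surface $\Gamma_{\ell_p}=\Gamma_c$ is ruled by lines of $\ccB$ through $p$. The general point $q$ therefore lies on one of these lines, which is a line of $\ccB$ contained in $\Gamma_c\subset\overline{\Gamma_c}$ and passing through both $p$ and $q$.

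It remains to pass from general pairs in $\Gamma_c$ to arbitrary pairs in $\overline{\Gamma_c}$, and this is where the potentially higher dimension of $\overline{\Gamma_c}$ --- the main obstacle flagged above --- is dealt with. As $X$ is projective and $L$ ample, the lines of $\ccB$ contained in the closed set $\overline{\Gamma_c}$ form a projective subfamily $\ccB'\subset\ccB$, so the incidence variety $\{(p,q,c')\colon c'\in\ccB',\ p,q\in C'\}$ is projective and its image $J\subset\overline{\Gamma_c}\times\overline{\Gamma_c}$ --- the set of pairs joined by a line of $\ccB$ lying in $\overline{\Gamma_c}$ --- is Zariski closed. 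By the previous paragraph $J$ contains a dense open subset of $\Gamma_c\times\Gamma_c$; since Zariski closure commutes with products and $\overline{\Gamma_c}$ is by definition the Zariski closure of $\Gamma_c$, the set $\Gamma_c\times\Gamma_c$ is Zariski dense in $\overline{\Gamma_c}\times\overline{\Gamma_c}$. Hence $J=\overline{\Gamma_c}\times\overline{\Gamma_c}$, which is exactly the assertion that any two points of $\overline{\Gamma_c}$ are connected by a line from $\ccB$ contained in $\overline{\Gamma_c}$. The only delicate points are the independence of $\Gamma_c$ from the base point (hence the existence of the transverse ruling through an arbitrary point of $C$ or of $\ell_p$) and the verification that $\Gamma_{\ell_p}=\Gamma_c$; once these are in place the closedness step makes the a priori unknown dimension of $\overline{\Gamma_c}$ irrelevant.
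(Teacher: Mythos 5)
Your proposal is correct and takes essentially the same route as the paper: you connect general pairs by re-basing the ruling construction at a line $\ell_p \subset \Gamma_c$ through a general point $p$ (using that $\Gamma_{\ell_p}$ and $\Gamma_c$ are swept out by the same leaves of $G$, hence agree generically), and then pass to arbitrary pairs in $\overline{\Gamma_c}$ via Zariski-closedness of the relation \emph{connected by a line from $\ccB$ contained in $\overline{\Gamma_c}$} --- which is precisely the paper's $conn(\cdot,\cdot)$ bootstrap, repackaged as a single product-closure argument. The one point you leave implicit is that applying the ruling fact to $\ell_p$ requires $\ell_p$ to lie in the Zariski-open subset of $\ccB$ where the whole construction is valid, not merely to be transverse to $G$; this does hold, since that open set contains $c$ and therefore meets the ruling curve $\tilde{\Gamma}_c$ in the complement of a discrete set, which is the same implicit genericity the paper handles by taking $c'$ in a small neighbourhood of $c$.
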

\begin{proof}
   We will say that $conn(Z_1,Z_2)$ holds for subsets $Z_i \subset \overline{\Gamma_c}$,
      if for all $x\in Z_1$ and $y\in Z_2$,
      the points $x$ and $y$ are connected by a line from $\ccB$
      contained in $\overline{\Gamma_c}$.
   First, note that if $conn(Z_1, Z_2)$ holds, then $conn(\overline{Z_1}, \overline{Z_2})$ also holds.
   Indeed, the set of lines contained in $\overline{\Gamma_c}$ is Zariski closed 
      (see Proposition~\ref{prop_linear_spaces_and_Chow_variety_for_projective}\ref{item_linear_spaces_are_closed})
      and so the set of $y \in \overline{\Gamma_c}$ such that $conn(x, y)$ holds for a fixed $x$ is also Zariski closed.
   Swapping the roles of $x$ and $y$ we obtain the claim.

   Suppose that $x \in C$ is a general point and $y\in \Gamma_c$ is a sufficiently general point in a small neighbourhood of $C$.
   In the notation of Proposition~\ref{prop_gamma_and_tilde_gamma_generically_agree}, we have a sequence of ``generically agree'' relations, which we denote by $\sim$:
   \[
     \Gamma_c \sim \tilde{\Gamma}_{x,c} \sim \tilde{\Gamma}_{x,c'} \sim \Gamma_{c'} \sim \tilde{\Gamma}_{y,c'},
   \]
   where $c'\in \ccR_{x,c}$ represents a line that connects $x$ and $y\in \tilde{\Gamma}_{x,c}$.
   The first, third and fourth $\sim$ follow from Proposition~\ref{prop_gamma_and_tilde_gamma_generically_agree}.
   The second $\sim$ follows since $\ccR_{x,c} \sim \ccR_{x.c'}$.
   In particular, the Zariski closures of each of these five sets are equal.
   
   Let $z\in \Gamma_c$ be any point. 
   Thus $z\in \overline{\tilde{\Gamma}_{y,c'}}$, hence $conn(\Gamma_c, y)$ holds.
   By our choice of $y$, also $conn(\Gamma_c, \Gamma_c)$ holds.
   Taking the Zariski closure, we  obtain the desired $conn(\overline{\Gamma_c}, \overline{\Gamma_c})$.
\end{proof}

From Theorem~\ref{thm_many_lines} and Lemma~\ref{lemma_any_points_on_Gamma_c_are_connected_by_a_line} 
  we conclude the claim of the first jump:
\begin{cor}\label{cor_normalisation_of_Gamma_c_is_Pk}
   The normalisation of $\overline{\Gamma_c}$ is $\PP^k$.
\end{cor}
\noprf

\subsection{The fall}

In this subsection we prove:
\begin{prop}\label{prop_Fall}
   In Setting~\ref{setting_divisor_of_non-standard_lines} and as in Corollary~\ref{cor_normalisation_of_Gamma_c_is_Pk}
   we have:
   \[
     k= \dim \overline{\Gamma_c} =2.
   \]
\end{prop}

\begin{prf}
Let $\mu: \PP^k \to \overline{\Gamma_c}$ be the normalisation map,
      and consider the distribution $\mu^* G \subset T \PP^k$.
Consider $\tilde{C}\simeq \PP^1 \subset \PP^k$ to be the line that dominates $C$ via the normalisation map $\mu$.
Note that $\tilde{C}$ is mapped isomorphically onto $C$.
We have three distributions in $TX|_{\tilde{C}}$: $T \PP^k|_{\tilde{C}}$, $G|_{\tilde{C}}$, $T \tilde{C}$.
The first one contains the latter two and they are generically transversal to each other.
We claim that $G|_{\tilde{C}}$ as a distribution in $T \PP^k|_{\tilde{C}}$ is isomorphic to
   $\ccO_{\PP^1}(1) \subset \ccO_{\PP^1}(1^{k-1},2)$.
To see that divide out by $T \tilde{C} \simeq \ccO_{\PP^1}(2)$
   and recall that the image of $G|_{\tilde{C}}$ in $TX|_{\tilde{C}}=\ccO_{\tilde{C}}(-b, 0^{n}, 1^{n-2}, 2, b+1) $
   is the $\ccO_{\PP^1}(b+1)$ component (Corollary~\ref{cor_G_is_O_of_b_plus_1}).
Then, after a suitable choice of splitting $N_{\tilde{C} \subset \PP^k} \simeq \ccO_{\PP^1}(1^{k-2}) \oplus \ccO_{\PP^1}(1)$,
   the derivative map restricted to $\tilde{C}$:
\[
   N_{\tilde{C} \subset \PP^k} \simeq \ccO_{\PP^1}(1^{k-2}) \oplus \ccO_{\PP^1}(1) \to N_{C \subset X} \simeq \ccO_{\PP^1}(-b, 0^{n}, 1^{n-2}, b+1)
\]
is given by an embedding of $\ccO_{\PP^1}(1^{k-2})$ into $\ccO_{\PP^1}(1^{n-2})$,
and the remaining component $\ccO_{\PP^1}(1)$ is mapped non-trivially into $\ccO_{\PP^1}(b+1)$.
Therefore $G|_{\tilde{C}} \subset T \PP^k|_{\tilde{C}}$ must correspond exactly to this component $\ccO_{\PP^1}(1)$, 
  again after a suitable choice of the splitting
  $T \PP^k|_{\tilde{C}} = N_{\tilde{C} \subset \PP^k} \oplus T \tilde{C}$.

We have obtained a rank $1$ distribution $\mu^* G \subset T \PP^k$, such that its restriction to a general  line $\PP^1 \subset \PP^k$ is $\ccO_{\PP^1}(1) \subset T \PP^k|_{\PP^1}$.
Such a distribution can only be obtained as tangent to lines through some fixed distinguished point  $y \in \PP^k$, see Lemma~\ref{lem_distribution_on_Pk}.
In particular, the leaves of $G$ are algebraic and $\overline{\Gamma_c}$ is a union of one parameter family of algebraic curves,
  and thus $k=2$, which completes the proof of the claim of the proposition.
\end{prf}

In conclusion:

\begin{cor}\label{cor_family_of_planes_covering_B}
In Setting~\ref{setting_divisor_of_non-standard_lines}, 
the divisor $B$ is dominated by a family of linear subspaces of dimension~$2$ as defined in \S\ref{sect_lines_and_linear_subspaces}.
Let $\ccP$ be this family and $\ccU_{\ccP}$ be the universal family:
\[
  \xymatrix{ & \ccU_{\ccP} \ar[dr]^{\phi_{\ccP}} \ar[dl]_{\pi_{\ccP}}&\\ \ccP\ar@/_/@{.>}[ur]_{\upsilon}&& B.}
\]
The fibres of $\pi_{\ccP}$ are $\PP^2$ and the image of a general fibre $\phi_{\ccP}(\PP^2) \subset B$ is the surface $\overline{\Gamma_c}$ for some line $c \in \ccB$.
The restriction of $\phi_{\ccP}$ to $\PP^2 \to \overline{\Gamma_c}$ is the normalisation map.
The rational section $\upsilon \colon \ccP \dashrightarrow \ccU_{\ccP}$ is assigning to a general plane $\PP^2$ (the normalisation of the surface $\overline{\Gamma_c}$) its distinguished point $y$ 
   constructed in the proof of Proposition~\ref{prop_Fall}.
\end{cor}  
\noprf

\subsection{The final jump}
We claim the locus $B$ swept by $\ccB$ as above is covered by linear subspaces of dimension $n$.
Lemma~\ref{lem_dim_Y_eq_n} below implies the claim and contains more details about these subspaces.

Let $Y \subset B$ be the closure of the image of $\phi_{\ccP} \circ \upsilon  \colon \ccP \dashrightarrow \ccU_{\ccP} \to B$,
  that is the set of all distinguished points as in Corollary~\ref{cor_family_of_planes_covering_B}.
This locus of distinguished points is very important in our considerations below and it will be used to ``bundle'' linear subspaces of dimension $2$ into a linear subspace of dimension $n$.
Eventually we claim that $\dim Y =n$, but first we must observe that $Y \ne B$.  
  
\begin{lemma}\label{lem_Y_ne_B}
  With $X$, $\ccB$, $B$ as above and throughout this section,
  the locus $Y$ of distinguished points is not equal to $B$.
\end{lemma}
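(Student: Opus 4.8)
The plan is to prove the sharper statement that the distinguished point of every general plane $\overline{\Gamma_c}$ lies in the degeneracy locus $B\setminus U(G)$ of the rank $1$ distribution $G$. Since $U(G)$ is dense and open in $B$, its complement is a proper closed subset, and as the distinguished points are by construction dense in $Y$, taking closures will give $Y\subseteq B\setminus U(G)\subsetneq B$, which is exactly the assertion $Y\ne B$.

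The heart of the argument is local, around a single distinguished point. First I would fix a general $c\in\ccB$, the surface $\overline{\Gamma_c}$ with its normalisation $\mu\colon\PP^2\to\overline{\Gamma_c}$ and distinguished point $y\in\PP^2$, so that by Lemma~\ref{lem_distribution_on_Pk} the leaves of $\mu^*G$ are precisely the lines through $y$; set $z:=\mu(y)\in B$. Arguing by contradiction, suppose $z\in U(G)$. As $G$ has rank $1$ it is integrable, and over $U(G)$ it is an honest line subbundle defining a nonsingular foliation; in particular ODE uniqueness gives a unique local leaf $\Delta_z$ through $z$, and every connected $G$-integral curve germ at $z$ is contained in $\Delta_z$. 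Now the image $\mu(\ell)$ of each line $\ell$ through $y$ is a $G$-integral curve passing through $z$, so each of its branches at $z$ is contained in the single curve $\Delta_z$. Letting $V$ be a small neighbourhood of $y$ in $\PP^2$, which is swept out by the lines through $y$, we conclude that the germ of $\mu(V)$ at $z$ is contained in $\Delta_z$. But $\mu$ is finite, so this germ is $2$-dimensional, whereas $\Delta_z$ is a curve; this contradiction shows $z\notin U(G)$.

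To finish I would transport this to all of $Y$. The distinguished points $z=\phi_{\ccP}(\upsilon(p))$ associated to points $p$ in the dense open domain of the rational section $\upsilon\colon\ccP\dashrightarrow\ccU_{\ccP}$ correspond to general planes $\overline{\Gamma_c}$, so by the previous paragraph they all lie in the closed set $B\setminus U(G)$. As these points are dense in $Y$ by the very definition of $Y$ as the closure of the image of $\phi_{\ccP}\circ\upsilon$, we get $Y\subseteq B\setminus U(G)$, and therefore $Y\ne B$.

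I expect the delicate point to be the local foliation step: one must check that the images of the lines through $y$ really are genuine $G$-integral curve germs through $z$ (so that leaf-uniqueness applies), and that the two-dimensionality of $\mu(V)$ — which rests only on the finiteness of the normalisation — is incompatible with being swept by leaves all passing through one point of $U(G)$. Conceptually this records the fact that the distinguished point is exactly where the foliation $G$ acquires a singularity, with infinitely many leaves meeting there, which is impossible on $U(G)$.
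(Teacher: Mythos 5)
Your argument is correct in substance, but it takes a genuinely different route from the paper's. The paper works at a general point $y\in B$ and supposes $y\in Y$: the images of the lines of the corresponding $\PP^2$ through its distinguished point then give a one-dimensional family of contact lines through $y$, all tangent to the rank-one distribution $G$, hence (as $G$ is defined at the general point $y$) all sharing the same tangent direction at $y$; this contradicts the second part of Lemma~\ref{lem_bend_and_break} (Bend and Break), which allows only finitely many lines through a point with a fixed tangent direction. You instead prove the sharper, purely local statement that a distinguished point must be a singularity of the foliation induced by $G$: if $G$ were an honest line field near $z=\mu(y)$, leaf uniqueness would force the whole germ of $\mu(V)$ at $z$, which is two-dimensional because the normalisation $\mu$ is finite, into the one-dimensional local leaf $\Delta_z$. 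Your route avoids Bend and Break entirely --- so it needs neither properness of the family nor smoothness of the lines at the base point, and it is closer in spirit to the observation already used inside the proof of Lemma~\ref{lem_distribution_on_Pk} --- and it yields the stronger inclusion of $Y$ into the degeneracy locus of $G$; the paper's route is shorter because Lemma~\ref{lem_bend_and_break} is already on the table. One repair is needed in your local step: leaf/ODE uniqueness requires a smooth ambient space, and $B$ is non-normal (hence singular), while $U(G)$ as defined in the paper is only the locus where $G$ is a subbundle of $TX|_{B}$ and may a priori meet $B_{\mathrm{sing}}$. So run the contradiction for $z\in U(G)\cap B_{\mathrm{sm}}$ (where $G\subset TB$ is a genuine line field, by closedness of this condition on the smooth locus) and conclude $Y\subseteq \left(B\setminus U(G)\right)\cup B_{\mathrm{sing}}$; this is still a proper closed subset of $B$, so $Y\ne B$ follows just as well.
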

\begin{prf}
  Let $y \in B$ be the general point and suppose by contradiction that $y \in Y$,
    that is, there exists $\PP^2$ in $\ccP$ with the distinguished point mapped to $y$.
  Since $y$ is general, $G$ is defined at $y$.
  The images of lines in $\PP^2$ through the distinguished point form a one dimensional family of contact lines tangent to $G$.
  In particular, they share the tangent direction at $y$. This is impossible by Lemma~\ref{lem_bend_and_break_tangent_directions}.
\end{prf}

For $y \in Y$, let $\ccP^y$ be the closure of the preimage $(\phi_{\ccP} \circ \upsilon)^{-1}(y)$,
  that is, essentially, the set of the planes $\PP^2$ with $y$ as the distinguished point.
The locus $P^y \subset B$ of $\ccP^y$ is the union of $\overline{\Gamma_c}$ corresponding to the points in $\ccP^y$. 
This is our candidate for the linear space of dimension $n$.

\begin{lemma}\label{lem_dim_Y_eq_n}
  We work in Setting~\ref{setting_divisor_of_non-standard_lines}, and we let $Y \subset B$ be the locus of distinguished points as above,
   and for $y \in Y$, the sets $\ccP^y \subset \ccP$, $P^y \subset B$ are as defined in the paragraph preceding this lemma. 
  Then $\dim Y = n$ and for a general $y \in Y$,
    the locus $P^y$ is a component of $B_y$ (the locus swept by lines from $\ccB$ passing through $y$), whose normalisation is a $\PP^n$.
\end{lemma}
\begin{prf}
  We need to count the dimensions and relative dimensions of the spaces appearing in our construction.
  Firstly, a general $\overline{\Gamma_c}$ in $\ccP$ is uniquely determined by a general line $c \in \ccB$.
  Thus we have a dominant rational map $\ccB \dashrightarrow \ccP$.
  The fibres are two dimensional: two lines $c,c' \in \ccB$
     determine the same $\overline{\Gamma_c} = \overline{\Gamma_{c'}}$ if and only if the following three conditions are satisfied:
  \begin{itemize}
   \item both $C$ and $C'$ intersect the locus where $G$ is defined;
   \item both $C$ and $C'$ are transversal to $G$ at their general points;
   \item $C' \subset \overline{\Gamma_c}$
         (assuming the above two conditions, this is equivalent to $C \subset \overline{\Gamma_{c'}}$).
  \end{itemize}
  The first two conditions are open in $\ccB$.
  The final one says that $C'$ is an image of one of the lines in the normalisation $\PP^2 \to \overline{\Gamma_c}$, and there is a two dimensional family of such lines.
  Thus it follows that:
  \begin{align}
     \dim \ccP & = \dim \ccB - 2 = 3n-4, \text{ and} \nonumber\\
     \dim \ccP^y & = 3n-4 - \dim Y. \label{equ_equality_dim_ccP^y}
  \end{align}
  A priori, $\ccP^y$ could be reducible. In such a case Equation \eqref{equ_equality_dim_ccP^y}
    is about pure dimension: every irreducible component of $\ccP^y$ has dimension $3n-4 - \dim Y$.

  Let $\ccS \subset \ccB$ be the closure of the set of lines tangent to $G$.
  Note that $\dim \ccS = 2n-1$ since there is a unique line in $\ccS$ through each general point of $B$.
  Also for a general line $c \in \ccS$ the intersection $C \cap Y$ is non-empty and finite.
  To prove this claim, the general point $s \in C$ is a general point of $B$ and belongs to a general $\PP^2 \in \ccP$.
  Thus $C$ is the image of the line in the plane $\PP^2$, that connects $s$ and the distinguished point of $\PP^2$.
  This shows that the intersection $C \cap Y$ is non-empty.
  Also $C$ is not contained in $Y$, since otherwise $Y = B$ contrary to Lemma~\ref{lem_Y_ne_B}.
  Thus $C \cap Y$ is finite.

  Let $y \in Y$ be a general point and suppose that $\ccS_y \subset \ccS$ is the set of lines in $\ccS$ containing $y$.
  Then
  \begin{align}
     \dim \ccS_y & = \dim \ccS - \dim Y = 2n-1 -\dim Y \text{ and} \nonumber \\
     \dim \ccS_y &\le  \dim \ccB_y = n - 1, \text{ so that} \nonumber \\
     \dim Y &\ge n \label{equ_inequality_dim_Y_ge_n}.
  \end{align}
  Furthermore, consider the locus $S_y \subset B_y$ swept by these lines.
  Note that $P^y \subset S_y$, thus:
  \begin{align}
     \dim P^y & \le \dim S_y \le \dim \ccS_y +1 = 2n -\dim Y. \label{equ_inequality_dim_P^y}
  \end{align}

  Let $\ccU_{\ccP^y}$ be the restriction of $\ccU_{\ccP}$ to $\ccP^y$, so that the image is
    $P^y = \phi_{\ccP} (\ccU_{\ccP^y}) \subset B$.
  We also consider the fibre product
  \begin{align}
    \ccU_{\ccP^y}^2 &:= \ccU_{\ccP^y} \times_{\ccP^y} \ccU_{\ccP^y}, \nonumber\\
    \text{so that } \dim \ccU_{\ccP^y}^2 & = \dim \ccP^y + 4 \stackrel{\eqref{equ_equality_dim_ccP^y}}{=} 3n - \dim Y,
     \label{equ_equality_dim_ccU_ccP^y^2}
  \end{align}
  and its map to $P^y \times P^y$.
  Less formally, $\ccU_{\ccP^y}^2$ is the set of triples $(\PP^2, \tilde{u}, \tilde{v})$,
     with $\tilde{u}, \tilde{v} \in \PP^2$ and the triple is mapped to two points $u,v$ in $\overline{\Gamma}_c$
     which is the surface normalised by the plane $\PP^2$.
  The two points in $\overline{\Gamma_c}$ are the images of $\tilde{u}$ and $\tilde{v}$ under the normalisation map.
  We claim that the map $\ccU_{\ccP^y}^2 \to P^y \times P^y$ is generically finite onto its image.
  More precisely, the map is generically finite onto the image of each irreducible component of $\ccU_{\ccP^y}^2$.

  To prove the claim, suppose that there is a curve $Z \subset \ccU_{\ccP^y}^2$
     contracted to a single point $(u, v) \in P^y \times P^y$.
  Suppose moreover, that $Z$ contains a general point $z_0$ of $\ccU_{\ccP^y}^2$
     (more precisely, $z_0$ is a general point of any of the components).
  The generality conditions on $(\PP^2_{z_0}, \tilde{u}_{z_0}, \tilde{v}_{z_0}) \in Z$ that we need are:
  \begin{itemize}
     \item If $\tilde{y}_{z_0} \in \PP^2_{z_0}$ is the distinguished point,
           then $\tilde{u}_{z_0}, \tilde{v}_{z_0}, \tilde{y}_{z_0}$ are not on a line in $\PP^2_{z_0}$.
     \item $G$ is defined at $u_{z_0}$.
  \end{itemize}
  Then $Z$ determines a curve in $\ccP^y$, such that each $\PP^2_z$
     on this curve contains all three of the points $y, u, v$.
  In particular, we can take the family of lines connecting $u$ and $v$ on each of the planes $\PP^2$.
  By Lemma~\ref{lem_bend_and_break_1_point}, the family of lines must be constant.
  This is a contradiction, since each plane $\PP^2$ is uniquely determined by the line
     (because $G$ is defined at $u$, so in particular, it is defined at a general point of that line).

  Thus $\ccU_{\ccP^y}^2 \to P^y \times P^y$ is generically finite onto its image,
     and
  \begin{align}
     \dim \ccU_{\ccP^y}^2 &\le 2 \dim P^y. \label{equ_inequality_dim_ccU_ccP^y^2}
  \end{align}
  We summarise our dimension counts:
  \begin{align}
     3n -\dim Y& \stackrel{\eqref{equ_equality_dim_ccU_ccP^y^2}}{=} \dim \ccU_{\ccP_y}^2
                 \stackrel{\eqref{equ_inequality_dim_ccU_ccP^y^2}}{\le} 2 \dim P^y
                 \stackrel{\eqref{equ_inequality_dim_P^y}}{\le} 2 (2n -\dim Y) \label{equ_long_inequality_for_dim_Y}\\
     \text{thus } \dim Y & \le n \text{ and combining with Inequality \eqref{equ_inequality_dim_Y_ge_n}:} \nonumber\\
      \dim Y & =  n.  \label{equ_equality_dim_Y_eq_n}
  \end{align}
  Thus we obtain the first claim of the lemma. Moreover, rewriting \eqref{equ_long_inequality_for_dim_Y}:
  \[
     2n = \dim \ccU_{\ccP_y}^2
        \stackrel{\eqref{equ_inequality_dim_ccU_ccP^y^2}}{\le} 2 \dim P^y
        \stackrel{\eqref{equ_inequality_dim_P^y}}{\le} 2n  \\
  \]
  we obtain an equality in \eqref{equ_inequality_dim_P^y} and in \eqref{equ_inequality_dim_ccU_ccP^y^2}:
  \begin{align*}
     \dim P^y &= n\\
     \dim \ccU_{\ccP_y}^2 & =  2 \dim P^y.
  \end{align*}
  Since the map $\ccU_{\ccP^y}^2 \to P^y \times P^y$ is generically finite onto its image,
     the dimension count proves that the map is dominant.
  Equivalently, for two general points in $P^y$, there exists a $\PP^2$ in $\ccP^y$,
     whose image in $P^y$ contains both points.
  In particular, there exists a line connecting the two points.
  Thus the normalisation of $P^y$ is $\PP^n$ by Theorem \ref{thm_many_lines}, and the lemma is proved.
\end{prf}

The lemma completes the proof of the claim of the final jump.
Now we can conclude our article with the proof of Theorem~\ref{thm_dimension_of_the_space_of_special_lines} about the structure of the locus $B$.
\begin{proof}[Proof of Theorem~\ref{thm_dimension_of_the_space_of_special_lines}]
  With $\ccB$ and $B$ as above, we have shown in Lemma~\ref{lem_dim_Y_eq_n},
     that $B$ is a divisor covered by linear subspaces of dimension $n$.
  We claim that there is a unique such linear subspace through a general point of $B$.
  To see this we will construct a distribution $G' \subset T B$ such that each linear subspace is a leaf of $G'$.
    
  Consider the lines tangent to $G$. 
  They form a family of lines of dimension $2n-1$, which cover the divisor $B$.
  This is the same family as was denoted by $\ccS$ in the proof of Lemma~\ref{lem_dim_Y_eq_n}.
  Pick a general such line $C$.
  As in Section~\ref{sect_splitting_types}, 
     consider the subbundle $(TX|_C)^{+} = \bigoplus_{i=1}^n \ccO(a_i)$, which is the sum of positive line bundle summands,
     that is, $a_i >0$.
  Then this bundle has a constant (independent of $C$) rank $n$.
  Since there is a unique such line through a general point of $B$, the subbundles $(TX|_C)^{+}$ glue together to a rank $n$ distribution $G'$ in $TX|_B$.
  Each linear space is swept out by the deformations of $C$ with one point fixed. 
  Thus the tangent space to the linear space at its general point is equal to the fibre of $G'$.

  In particular, there is a unique linear space through a general point of $B$. 
  Consider the (normalised) family of linear spaces $\pi\colon\ccU_{\ccR} \to \ccR$ of dimension $n$, that dominates $B$.
  By Proposition~\ref{prop_linear_spaces_and_Chow_variety_for_projective} the base of the family might be chosen to be projective,
     the sheaf $\ccE := \pi_*(\xi^* L)$ is a vector bundle of rank $n+1$, and $\ccU_{\ccR} \simeq \PP(\ccE^*)$ with $\ccO_{\PP(\ccE^{*})}(1) = \xi^*L$.
  Moreover, we may assume that the morphism to Chow variety as in Proposition~\ref{prop_linear_spaces_and_Chow_variety}\ref{item_families_of_linear_spaces_determine_morphism_to_Chow}
     is a normalisation of its image, in particular finite.
  The above uniqueness argument shows that the evaluation map $\xi$ is birational.
  Moreover, we claim below that $\xi$ is finite.
  The claim (proved in the next paragraph) implies, that $\xi$ is the normalisation map of $B$, thus completes the proof of 
     items \ref{item_normalisation_of_B_is_a_Pn_bundle} and \ref{item_normalisation_of_B_normalises_linear_spaces} of the theorem.
  
  Let $\ccU_{\ccR}\xrightarrow{\beta} \ccU' \xrightarrow{\alpha} B$ be the Stein factorisation of $\xi$ with $\beta$ being a projective morphism with connected fibres, $\alpha$ a finite morphism, and $\ccU'$ normal.
  We claim that $\beta$ is an isomorphism.
  To see that, suppose by contradiction that $\xi$ contracts an irreducible closed positive dimensional subvariety $Z \subset \ccU_{\ccR}$ 
     to a point $x \in X$.
  If $Z$ is contained in some fibre $\PP^n$, then $\xi^*L|_{Z}$ is ample by our assumption on $\xi^*L$
     and trivial by our choice of $Z$, a contradiction.
  Thus $Z$ maps onto a closed positive dimensional and irreducible $\tilde Z \subset \ccR$.
  Let $\ccU_{\tilde{Z}} \subset \ccU_{\ccR}$ be the restricted family.
  The image $\xi(\ccU_{\tilde{Z}})$ is an irreducible subset of $X$, whose points are connected to $x$ by lines.
  Therefore $\xi(\ccU_{\tilde{Z}}) \subset H_x$ and by dimension count
     $\xi(\ccU_{\tilde{Z}}) = \xi(\ccU_{\tilde{z}})$, for any $\tilde{z} \in \tilde{Z}$.
  In particular, $\tilde{Z}$ is contracted under the morphism from $\ccR$ to the Chow variety.
  This is impossible, since the morphism is finite (it is a normalisation of a subset of the Chow variety), 
     and $\dim \tilde{Z} > 0$.
  
  It remains to show \ref{item_B_nonnormal}. 
  Recall from above, that $\dim B =2n$ and $\ccU_{\ccR}$ is the normalisation of $B$ by \ref{item_normalisation_of_B_is_a_Pn_bundle}.
  Moreover, the second Betti number $b_2(\ccU_{\ccR}) \ge 2$ since $\ccU_{\ccR}$ is a projective space bundle over a projective base,
     while $b_2(B)=b_2(X)=1$ by Lefschetz hyperplane section theorem
     \cite[Thm~2.3.1]{beltrametti_sommese_adjunction_theory_of_projective_varieties}. 
  Note that the divisor $B$ on $X$ is ample, since it is effective and $\Pic X \simeq \ZZ$.
  In particular, $\ccU_{\ccR}$ is not isomorphic to $B$ and thus $B$ is not normal as claimed.
\end{proof}

\bibliography{special_contact_lines}
\bibliographystyle{alpha_four}

\end{document}